\definecolor{navy}{RGB}{20,0,105}
\titleformat*{\subsection}{\large\bfseries}
\titleformat*{\subsubsection}{\bfseries}
\newcommand{\dif}{\mathop{}\!\mathrm{d}} 
\newcommand{\dt}{\dif t} \newcommand{\ds}{\dif s}  \newcommand{\dX}{\dif X} \newcommand{\dY}{\dif Y}  \newcommand{\dW}{\dif W} \newcommand{\dx}{\dif x}  \newcommand{\dy}{\dif y} \newcommand{\dz}{\dif z}  \newcommand{\dPi}{\dif \Pi}    \newcommand{\dmu}{\dif \mu} 
\newcommand{\od}[2]{\frac{\dif #1}{\dif #2}}
\newcommand{\PP}{\mathbb{P}}  \newcommand{\QQ}{\mathbb{Q}} \newcommand{\NN}{\mathbb{N}} \newcommand{\ZZ}{\mathbb{Z}} \newcommand{\RR}{\mathbb{R}}   \newcommand{\II}{\mathbbm{1}}
\newcommand{\Aa}{\mathcal{A}} \newcommand{\Bb}{\mathcal{B}} \newcommand{\Cc}{\mathcal{C}} \newcommand{\Dd}{\mathcal{D}}  \newcommand{\Ff}{\mathcal{F}} \newcommand{\Gg}{\mathcal{G}}  \newcommand{\Ii}{\mathcal{I}}          \newcommand{\Ss}{\mathcal{S}}     \newcommand{\Xx}{\mathcal{X}}  
\DeclarePairedDelimiter{\norm}{\lVert}{\rVert}
\DeclarePairedDelimiter{\abs}{\lvert}{\rvert}
\DeclarePairedDelimiter{\braces}{ \{ }{ \} }
\DeclarePairedDelimiter{\brackets}{(}{)}
\DeclarePairedDelimiter{\sqbrackets}{[}{]}
\DeclarePairedDelimiter{\qv}{\langle}{\rangle}
\DeclarePairedDelimiter{\ip}{\langle}{\rangle}
\DeclareMathOperator*{\argmin}{argmin}
\DeclareMathOperator{\Var}{Var}
\DeclareMathOperator{\Cov}{Cov}
\DeclareMathOperator{\Span}{span}
\DeclareMathOperator{\KL}{KL} 
\newcommand{\eps}{\varepsilon}
\newcommand{\st}{:} 
\newcommand{\per}{\text{per}}
\newcommand{\iidsim}{\overset{iid}{\sim}}
\newcommand{\WW}{\mathbb{W}}
\newcommand{\BB}{\mathbb{B}}
\theoremstyle{definition}   \newtheorem{assumption}{Assumption}
\theoremstyle{remark} \newtheorem{example}{Example} \newtheorem*{remark}{Remark}  \newtheorem*{remarks}{Remarks}
\theoremstyle{plain} \newtheorem{theorem}{Theorem} \newtheorem*{theorem*}{Theorem} \newtheorem{lemma}[theorem]{Lemma}  \newtheorem*{lemma*}{Lemma} \newtheorem{proposition}[theorem]{Proposition}
\crefname{appsec}{Appendix}{Appendices}
\crefname{assumption}{assumption}{assumptions}
\crefname{equation}{}{}
\crefname{enumi}{}{}
\newlist{lemenum}{enumerate}{1} 
\setlist[lemenum]{label=\roman*., ref=\arabic{theorem}(\roman*)}
\newlist{thmenum}{enumerate}{1} 
\setlist[thmenum]{label=\Alph*., ref=\arabic{theorem}\Alph*}
\begin{document}

\begin{frontmatter}
\title{Nonparametric Bayesian posterior contraction rates for scalar diffusions with high-frequency data}
\runtitle{Posterior contraction for high-frequency sampled diffusions}

\begin{aug}
\author{\fnms{Kweku} \snm{Abraham}\ead[label=e1]{lkwabraham@statslab.cam.ac.uk}}

\address{Statistical Laboratory, Department of Pure Mathematics and Mathematical Statistics, University of Cambridge, Wilberforce Road, Cambridge CB3 0WB, UK.
\printead{e1}}

\runauthor{K. Abraham}

\affiliation{University of Cambridge}

\end{aug}

\begin{abstract}
We consider inference in the scalar diffusion model $\dX_t=b(X_t)\dt+\sigma(X_t)\dW_t$ with discrete data $(X_{j\Delta_n})_{0\leq j \leq n}$, $n\to \infty,~\Delta_n\to 0$ and periodic coefficients. For $\sigma$ given, we prove a general theorem detailing conditions under which Bayesian posteriors will contract in $L^2$--distance around the true drift function $b_0$ at the frequentist minimax rate (up to logarithmic factors) over Besov smoothness classes. We exhibit natural nonparametric priors which satisfy our conditions. Our results show that the Bayesian method adapts both to an unknown sampling regime and to unknown smoothness.
\end{abstract}

\begin{keyword}
\kwd{adaptive estimation}
\kwd{Bayesian nonparametrics}
\kwd{concentration inequalities}
\kwd{diffusion processes}
\kwd{discrete time observations}
\kwd{drift function}
\end{keyword}

\end{frontmatter}

\section{Introduction}\label{sec:Introduction}
Consider a scalar diffusion process $(X_t)_{t\geq 0}$ starting at some $X_0$ and evolving according to the stochastic differential equation 
\begin{equation*}  \dX_t=b(X_t)\dt+\sigma(X_t)\dW_t,\end{equation*}
where $W_t$ is a standard Brownian motion.
It is of considerable interest to estimate the parameters $b$ and $\sigma$, which are arbitrary functions (until we place further assumptions on their form), so that the model is naturally \emph{nonparametric}. As we will explain in \Cref{sec:FrameworkAndAssumptions}, the problems of estimating $\sigma$ and $b$ can essentially be decoupled in the setting to be considered here, so in this paper we consider estimation of the drift function $b$ when the diffusion coefficient $\sigma$ is assumed to be given.

It is realistic to assume that we do not observe the full trajectory $(X_t)_{t\leq T}$ but rather the process sampled at discrete time intervals $(X_{k\Delta})_{k\leq n}$.
The estimation problem for $b$ and $\sigma$ has been studied extensively and minimax rates have been attained in two sampling frameworks: \emph{low-frequency}, where $\Delta$ is fixed and asymptotics are taken as $n\to\infty$ (see Gobet--Hoffmann--Reiss \cite{Gobet2004}), and \emph{high-frequency}, where asymptotics are taken as $n\to \infty$ and $\Delta=\Delta_n\to 0$, typically assuming also that $n\Delta^2\to 0$ and $n\Delta\to \infty$ (see Hoffmann \cite{Hoffmann1999b}, Comte et al.\ \cite{Comte2007}). See also eg.\ \cite{Dalalyan2005}, \cite{Gugushvili2014}, \cite{Pokern2013}, \cite{vanderMeulen2017} for more papers addressing nonparametric estimation for diffusions. 

For typical frequentist methods, one must know which sampling regime the data is drawn from. In particular, the low-frequency estimator from \cite{Gobet2004} is consistent in the high-frequency setting but numerical simulations suggest it does not attain the minimax rate (see the discussion in Chorowski \cite{Chorowski2016}), while the high-frequency estimators of \cite{Hoffmann1999b} and \cite{Comte2007} are not even consistent with low-frequency data. The only previous result known to the author regarding adaptation to the sampling regime in the nonparametric setting is found in \cite{Chorowski2016}, where Chorowski is able to estimate the diffusion coefficient $\sigma$ but not the drift, and obtains the minimax rate when $\sigma$ has 1 derivative but not for smoother diffusion coefficients. 

For this paper we consider estimation of the parameters in a diffusion model from a nonparametric Bayesian perspective. Bayesian methods for diffusion estimation can be implemented in practice (eg.\ see Papaspiliopoulos et al.\ \cite{Papaspiliopoulos2012}). For Bayesian estimation, the statistician need only specify a prior, and for estimating diffusions from discrete samples the  prior need not reference the sampling regime, so Bayesian methodology provides a natural candidate for a unified approach to the high- and low-frequency settings. Our results imply that Bayesian methods can adapt both to the sampling regime and also to unknown smoothness of the drift function (see the remarks after \Cref{prop:InvariantDensityPrior} and \Cref{prop:BasicSievePrior} respectively for details). These results are proved under the frequentist assumption of a fixed true parameter, so this paper belongs to the field of \emph{frequentist analysis of Bayesian procedures}. See, for example, Ghosal \& van der Vaart \cite{Ghosal2017} for an introduction to this field. 

It has previously been shown that in the low-frequency setting we have a \emph{posterior contraction rate}, guaranteeing that posteriors corresponding to reasonable priors concentrate their mass on neighbourhoods of the true parameter shrinking at the fastest possible rate (up to log factors) -- see Nickl \& S{\"o}hl \cite{Nickl2017}. To complete a proof that such posteriors contract at a rate adapting to the sampling regime, it remains to prove a corresponding contraction rate in the high-frequency setting. This forms the key contribution of the current paper: we prove that a large class of ``reasonable'' priors will exhibit posterior contraction at the optimal rate (up to log factors) in $L^2$--distance. This in turn guarantees that point estimators based on the posterior will achieve the frequentist minimax optimal rate (see the remark after \Cref{thm:ConcretePriorContraction}) in both high- and low-frequency regimes.

The broad structure of the proof is inspired by that in \cite{Nickl2017}: we use the testing approach of Ghosal--Ghosh--van der Vaart \cite{GGV2000}, coupled with the insight of Gin{\'e} and Nickl \cite{Gine2011a} that one may prove the existence of the required tests by finding an estimator with good enough concentration around the true parameter. The main ingredients here are:
\begin{itemize}	
\item A concentration inequality for a (frequentist) estimator, from which we construct tests of the true $b_0$ against a set of suitable (sufficiently separated) alternatives. 
See \Cref{sec:Concentration}.
\item A small ball result, to relate the $L^2$--distance to the information-theoretic Kullback--Leibler ``distance''. See \Cref{sec:SmallBallProbs}.
\end{itemize}
Though the structure reflects that of \cite{Nickl2017} the details are very different. Estimators for the low-frequency setting are typically based on the mixing properties of $(X_{k\Delta})$ viewed as a Markov chain and the spectral structure of its transition matrix (see Gobet--Hoffmann--Reiss \cite{Gobet2004}) and fail to take full advantage of the local information one sees when $\Delta\to 0$. Here we instead use an estimator introduced in Comte et al.\ \cite{Comte2007} which uses the assumption $\Delta\to 0$ to view estimation of $b$ as a regression problem. To prove this estimator concentrates depends on a key insight of this paper: the Markov chain concentration results used in the low-frequency setting (which give \emph{worse} bounds as $\Delta\to 0$) must be supplemented by H{\"o}lder type continuity results, which crucially rely on the assumption $\Delta\to0$. We further supplement by martingale concentration results.

Similarly, the small ball result in the low-frequency setting depends on Markov chain mixing. Here, we instead adapt the approach of van der Meulen \& van Zanten \cite{vanderMeulenvanZanten2013}. They demonstrate that the Kullback--Leibler divergence in the discrete setting can be controlled by the corresponding divergence in the continuous data model; a key new result of the current paper is that in the high-frequency setting this control extends to give a bound on the variance of the log likelihood ratio. 

As described above, a key attraction of the Bayesian method is that it allows the statistician to approach the low- and high-frequency regimes in a unified way. Another attraction is that it naturally suggests uncertainty quantification via posterior credible sets. The contraction rate theorems proved in this paper and \cite{Nickl2017} are not by themselves enough to prove that credible sets behave as advertised. For that one may aim for a nonparametric Bernstein--von Mises result -- see for example Castillo \& Nickl \cite{Castillo2013,Castillo2014}. The posterior contraction rate proved here constitutes a key first step towards a proof of a Bernstein--von Mises result for the high-frequency sampled diffusion model, since it allows one to localise the posterior around the true parameter, as in the proofs in Nickl \cite{Nickl2017b} for a non-linear inverse problem comparable to the problem here.

\section{Framework and assumptions}\label{sec:FrameworkAndAssumptions}
The notation introduced throughout the paper is gathered in \Cref{sec:notation}.

We work with a scalar diffusion process $(X_t)_{t\geq 0}$ starting at some $X_0$ and evolving according to the stochastic differential equation 
\begin{equation} \label{eqn:sde} \dX_t=b(X_t)\dt +\sigma(X_t)\dW_t, \end{equation} for $W_t$ a standard Brownian motion. The parameters $b$ and $\sigma$ are assumed to be 1--periodic and we also assume the following.

\begin{assumption}\label{assumptions:sigma}
$\sigma\in C_\per^2([0,1])$ is given. 
Continuity guarantees the existence of an upper bound $\sigma_U< \infty$ and we further assume the existence of a lower bound $\sigma_L>0$ so that $\sigma_L\leq \sigma(x)\leq \sigma_U$ for all $x\in [0,1]$. Here $C_\per^2([0,1])$ denotes $C^2([0,1])$ functions with periodic boundary conditions (i.e.\ $\sigma(0)=\sigma(1)$, $\sigma'(0)=\sigma'(1)$ and $\sigma''(0)=\sigma''(1)$). 
\end{assumption}

\begin{assumption}\label{assumptions:b}
$b$ is continuously differentiable with given norm bound. Precisely, we assume $b\in\Theta$, where, for some arbitrary but known constant $K_0,$ \[\Theta=\Theta(K_0)=\braces{f\in C_\per^1([0,1]) \st ~\norm{f}_{C_\per^1}=\norm{f}_\infty+\norm{f'}_\infty\leq K_0}.\] ($\norm{\cdot}_\infty$ denotes the supremum norm, $\norm{f}_\infty=\sup_{x\in[0,1]}\abs{f(x)}$.) Note in particular that $K_0$ upper bounds $\norm{b}_\infty$ and that $b$ is Lipschitz continuous with constant at most $K_0$. 

$\Theta$ is the maximal set over which we prove contraction, and we will in general make the stronger assumption that in fact $b\in \Theta_s(A_0)$, where 
\[\Theta_s(A_0):=\braces{f\in \Theta : \norm{f}_{B_{2,\infty}^s}\leq A_0<\infty},\quad A_0>0,~s\geq 1\]
 with $B_{p,q}^s$ denoting a periodic Besov space and $\norm{\cdot}_{B_{p,q}^s}$ denoting the associated norm: see \Cref{sec:ApproxSpaces} for a definition of the periodic Besov spaces we use (readers unfamiliar with Besov spaces may substitute the $L^2$--Sobolev space $H^s=B_{2,2}^s\subseteq B_{2,\infty}^s$ for $B_{2,\infty}^s$ and only mildly weaken the results). We generally assume the regularity index $s$ is unknown. Our results will therefore aim to be \emph{adaptive}, at least in the smoothness index (to be fully adaptive we would need to adapt to $K_0$ also).  
\end{assumption}

Under \Cref{assumptions:b,assumptions:sigma}, there is a unique strong solution to \Cref{eqn:sde} (see, for example, Bass \cite{Bass2011} Theorem 24.3). Moreover, this solution is also weakly unique (= unique in law) and satisfies the Markov property (see \cite{Bass2011} Proposition 25.2 and Theorem 39.2). We denote by $P_b^{(x)}$ the law (on the cylindrical $\sigma$--algebra of $C([0,\infty])$) of the unique solution of \Cref{eqn:sde} started from $X_0=x$.

We consider ``high-frequency data'' $(X_{k\Delta_n})_{k=0}^n$ sampled from this solution, where asymptotics are taken as $n\to \infty$, with $\Delta_n\to 0$ and $n\Delta_n \to \infty$. 
We will suppress the subscript and simply write $\Delta$ for $\Delta_n$.  
Throughout we will write $X^{(n)}=(X_0,\dots,X_{n\Delta})$ as shorthand for our data and similarly we write $x^{(n)}=(x_0,\dots,x_{n\Delta})$. We will denote by $\Ii$ the set $\braces{K_0,\sigma_L,\sigma_U}$ so that, for example, $C(\Ii)$ will be a constant depending on these parameters.

Beyond guaranteeing existence and uniqueness of a solution, our assumptions also guarantee the existence of transition densities for the discretely sampled process (see Gihman \& Skorohod \cite{Gihman1972} Theorem 13.2 for an explicit formula for the transition densities). Morever, there also exists an invariant distribution $\mu_b$, with density $\pi_b$, for the periodised process $\dot{X}=X\mod 1$. Defining $I_b(x)=\int_0^x \frac{2b}{\sigma^2}(y)\dy$ for $x\in[0,1],$ the density is   
\begin{align*} &\pi_b(x) =\frac{e^{I_b(x)}}{H_b\sigma^2(x)}\brackets[\Big]{e^{I_b(1)}\int_x^1 e^{-I_b(y)}\dy +\int_0^x e^{-I_b(y)}\dy}, \qquad x\in [0,1], \\ & H_b=\int_0^1 \frac{e^{I_b(x)}}{\sigma^2(x)}\brackets[\Big]{e^{I_b(1)}\int_x^1 e^{-I_b(y)}\dy +\int_0^x e^{-I_b(y)}\dy}\dx, \end{align*}
(see Bhattacharya et al.\ \cite{Bhattacharya1999}, equations 2.15 to 2.17; note we have chosen a different normalisation constant so the expressions appear slightly different).

Observe that $\pi_b$ is bounded uniformly away from zero and infinity, i.e.\ there exist constants $0<\pi_L,\pi_U<\infty$ depending only on $\Ii$ so that for any $b\in \Theta$ and any $x\in [0,1]$ we have $\pi_L\leq\pi_b(x)\leq \pi_U$. 
Precisely, we see that $\sigma_U^{-2}e^{-6K_0\sigma_L^{-2}}\leq H_b\leq \sigma_L^{-2}e^{6K_0\sigma_L^{-2}},$
and we deduce we can take $\pi_L=\pi_U^{-1}=\sigma_L^2\sigma_U^{-2} e^{-12K_0\sigma_L^{-2}}.$

We assume that $X_0\in [0,1)$ and that $X_0=\dot{X}_0$ follows this invariant distribution. 
\begin{assumption}\label{assumption:invariant}
$X_0\sim \mu_b$. 
\end{assumption}
We will write $P_b$ for the law of the full process $X$ under \Cref{assumptions:b,assumptions:sigma,assumption:invariant}, and we will write $E_b$ for expectation according to this law. Note $\mu_b$ is not invariant for $P_b$, but nevertheless $E_b(f(X_t))=E_b(f(X_0))$ for any 1--periodic function $f$ (eg.\ see the proof of \Cref{thm:concentration-for-diffusions}). Since we will be estimating the 1--periodic function $b$, the assumption that $X_0\in [0,1)$ is unimportant.

Finally, we need to assume that $\Delta\to0$ at a fast enough rate.
\begin{assumption}\label{assumption:Delta}
$n\Delta^2 \log(1/\Delta)\leq L_0$ for some (unknown) constant $L_0$. Since we already assume $n\Delta\to\infty$, this new assumption is equivalent to $n\Delta^2\log(n)\leq L_0'$ for some constant $L_0'$. 
\end{assumption}

Throughout we make the frequentist assumption that the data is generated according to some fixed true parameter denoted $b_0$. We use $\mu_0$ as shorthand for $\mu_{b_0}$, and similarly for $\pi_0$ and so on. Where context allows, we write $\mu$ for $\mu_b$ with a generic drift $b$. 

\begin{remarks}[Comments on assumptions]
\emph{Periodicity assumption.} We assume $b$ and $\sigma$ are periodic so that we need only estimate $b$ on $[0,1]$. One could alternatively assume $b$ satisfies some growth condition ensuring recurrence, then estimate the restriction of $b$ to $[0,1]$, as in Comte et al.\ \cite{Comte2007} and van der Meulen \& van Zanten \cite{vanderMeulenvanZanten2013}. The proofs in this paper work in this alternative framework with minor technical changes, provided one assumes the behaviour of $b$ outside $[0,1]$ can be exactly matched by a draw from the prior. 

\emph{Assuming that $\sigma\in C^2_\per$ is given.} 
If we observe continuous data $(X_t)_{t\leq T}$ then $\sigma$ is known exactly (at least at any point visited by the process) via the expression for the quadratic variation $\qv{X}_t=\int_0^t \sigma^2(X_s)\ds$. With high-frequency data we cannot perfectly reconstruct the diffusion coefficient from the data, but we can estimate it at a much faster rate than the drift. When $b$ and $\sigma$ are both assumed unknown, if $b$ is $s$-smooth and $\sigma$ is $s'$-smooth, the minimax errors for $b$ and $\sigma$ respectively scale as $(n\Delta)^{-s/(1+2s)}$ and $n^{-s'/(1+2s')}$, as can be shown by slightly adapting Theorems 5 and 6 from Hoffmann \cite{Hoffmann1999b} so that they apply in the periodic setting we use here. Since we assume that $n\Delta^2\to 0$, it follows that $n\Delta\leq n^{1/2}$ for large $n$, hence we can estimate $\sigma$ at a faster rate than $b$ regardless of their relative smoothnesses. 

Further, note that the problems of estimating $b$ and $\sigma$ in the high-frequency setting are essentially independent. For example, the smoothness of $\sigma$ does not affect the rate for estimating $b$, and vice-versa -- see \cite{Hoffmann1999b}. We are therefore not substantially simplifying the problem of estimating $b$ through the assumption that $\sigma$ is given.

The assumption that $\sigma^2$ is twice differentiable is a typical minimal assumption to ensure transition densities exist.

\emph{Assuming a known bound on $\norm{b}_{C_\per^1}$.}
The assumption that $b$ has one derivative is a typical minimal assumption to ensure that the diffusion equation \Cref{eqn:sde} has a strong solution and that this solution has an invariant density. The assumption of a \emph{known} bound for the $C_\per^1$--norm of the function is undesirable, but needed for the proofs, in particular to ensure the existence of a uniform lower bound $\pi_L$ on the invariant densities. This lower bound is essential for the Markov chain mixing results as its reciprocal controls the mixing time in \Cref{thm:concentration-for-diffusions}. It is plausible that needing this assumption is inherent to the problem rather than an artefact of the proofs: possible methods to bypass the Markov chain mixing arguments, such as the martingale approach of \cite{Comte2007} Lemma 1, also rely on such a uniform lower bound. One could nonetheless hope that our results apply to an unbounded prior placing sufficient weight on $\Theta(K_n)$ for some slowly growing sequence $K_n$, but the lower bound $\pi_L$ scales unfavourably as $e^{-K_n}$, which rules out this approach.

These boundedness assumptions in principle exclude Gaussian priors, which are computationally attractive. In practice, one could choose a very large value for $K_0$ and approximate Gaussian priors arbitrarily well using truncated Gaussian priors.

\emph{Assuming $X_0\sim \mu_b$.}
It can be shown (see the proof of \Cref{thm:concentration-for-diffusions}) that the law of $\dot{X}_t$ converges to $\mu_b$ at exponential rate from any starting distribution, so assuming $X_0\sim \mu_b$ is not restrictive (as mentioned, our fixing $X_0\in [0,1)$ is arbitrary but unimportant).


\emph{Assuming $n\Delta^2\log(1/\Delta)\leq L_0$.}
It is typical in the high-frequency setting to assume $n\Delta^2\to 0$ (indeed the minimax rates in \cite{Hoffmann1999b} are only proved under this assumption) but for technical reasons in the concentration section (\Cref{sec:ComteConcentration}) 
we need the above. 
\end{remarks}

\subsection{Spaces of approximation} \label{sec:ApproxSpaces}
We will throughout depend on a family $\braces{S_m : m\in \NN\cup \braces{0}}$ of function spaces. For our purposes we will take the $S_m$ to be periodised Meyer-type wavelet spaces \[S_m=\Span\brackets{\braces{\psi_{lk} : 0\leq k< 2^l,0\leq l < m}\cup\braces{1}}.\] We will denote $\psi_{-1,0}\equiv 1$ for convenience. Denote by $\ip{\cdot,\cdot}$ the $L^2([0,1])$ inner product and by $\norm{\cdot}_2$ the $L^2$--norm,  i.e.\
$\ip{f,g}=\int_0^1 f(x)g(x)\dx$ and $\norm{f}_2=\ip{f,f}^{1/2}$ for $f,g\in L^2([0,1]). $ 
One definition of the (periodic) Besov norm $\norm{f}_{B_{2,\infty}^s}$ is, for $f_{lk}:=\ip{f,\psi_{lk}}$, 
\begin{equation}\label{eqn:B2inftywaveletcharacterisation}\norm{f}_{B_{2,\infty}^s}= \abs{f_{-1,0}}+\sup_{l\geq 0} 2^{ls}\brackets[\bigg]{\sum_{k=0}^{2^l-1} f_{lk}^2}^{1/2},\end{equation} with $B_{2,\infty}^s$ defined as those periodic $f\in L^2([0,1])$ for which this norm is finite. See  Gin{\'e} \& Nickl \cite{Gine2016} Sections 4.2.3 and 4.3.4 for a construction of periodised Meyer-type wavelets and a proof that this wavelet norm characterisation agrees with other possible definitions of the desired Besov space. 

Note that the orthonormality of the wavelet basis means $\norm{f}_2^2=\sum_{l,k} f_{lk}^2$. Thus it follows from the above definition of the Besov norm that for any $b\in B_{2,\infty}^s([0,1])$ we have \begin{equation} \label{eqn:ApproxSpaces}\norm{\pi_m b-b}_2 \leq K\norm{b}_{B_{2,\infty}^s} 2^{-ms},\end{equation} 
for all $m$, for some constant $K=K(s)$, where $\pi_m$ is the $L^2$--orthogonal projection onto $S_m$.  

\begin{remarks}
\emph{Uniform sup-norm convergence of the wavelet series.} The wavelet projections $\pi_m b$ converge to $b$ in supremum norm for any $b\in \Theta$, uniformly across $b\in\Theta$. That is, \begin{equation}\label{eqn:WaveletSeriesConverges}
\sup_{b\in\Theta}\norm{\pi_m b-b}_\infty \to 0 \quad \text{as}\quad m\to \infty.
\end{equation} This follows from Proposition 4.3.24 in \cite{Gine2016} since $K_0$ uniformly bounds $\norm{b}_{C_\per^1}$ for $b\in \Theta$.

\emph{Boundary regularity.} Functions in the periodic Besov space here denoted $B_{2,\infty}^s$ are $s$ regular at the boundary, in the sense that their weak derivatives of order $s$ are 1--periodic.

\emph{Alternative approximation spaces.} The key property we need for our approximation spaces is that \Cref{eqn:ApproxSpaces} and \Cref{eqn:WaveletSeriesConverges} hold. Of these, only the first is needed of our spaces for our main contraction result \Cref{thm:ConcretePriorContraction}. A corresponding inequality holds for many other function spaces if we replace $2^m$ by $D_m=\dim(S_m)$; for example, for $S_m$ the set of trigonometric polynomials of degree at most $m$, or (provided $s\leq s_{\max}$ for some given $s_{\max}\in \RR$) for $S_m$ generated by periodised Daubechies wavelets. Priors built using these other spaces will achieve the same posterior contraction rate. 
\end{remarks}

\section{Main contraction theorem}\label{sec:MainContractionTheorems}
Let $\Pi$ be a (prior) probability distribution on some $\sigma$--algebra $\Ss$ of subsets of $\Theta$. Given $b\sim \Pi$ assume that $(X_t: t\geq 0)$ 
follows the law $P_b$ as described in \Cref{sec:FrameworkAndAssumptions}. Write $p_b(\Delta,x,y)$ for the transition densities \[p_b(\Delta,x,y)\dy = P_b(X_\Delta \in \dy \mid X_0=x),\] and recall we use $p_0$ as shorthand for $p_{b_0}$. Assume that the mapping $(b,\Delta,x,y) \mapsto p_b(\Delta,x,y)$ is jointly measurable with respect to the $\sigma$--algebras $\Ss$ and $\Bb_\RR$, where $\Bb_\RR$ is the Borel $\sigma$--algebra on $\RR$. Then it can be shown by standard arguments that the Bayesian posterior distribution given the data is 
\[ b\mid X^{(n)} 
\sim \frac{\pi_b(X_0)\prod_{i=1}^n p_b(\Delta,X_{(i-1)\Delta},X_{i\Delta})\dPi(b)}{\int_\Theta \pi_b(X_0)\prod_{i=1}^n p_b(\Delta,X_{(i-1)\Delta},X_{i\Delta})\dPi(b)}\equiv\frac{p_b^{(n)}(X^{(n)})\dPi(b)}{\int_\Theta p_b^{(n)}(X^{(n)})\dPi(b)}, \]
where we introduce the shorthand $p_b^{(n)}(x^{(n)})=\pi_b(x_0)\prod_{i=1}^{n}p_b(\Delta,x_{(i-1)\Delta},x_{i\Delta})$ for the joint probability density of the data $(X_{0},\dots,X_{n\Delta})$.

A main result of this paper is the following. \Cref{thm:SievePriorContraction} is designed to apply to adaptive sieve priors, while \Cref{thm:sK0KnownContraction} is designed for use when the smoothness of the parameter $b$ is known. See \Cref{sec:ExplicitPriors} for explicit examples of these results in use and see \Cref{sec:MainContractionResultsProof} for the proof.
\begin{theorem}\label{thm:ConcretePriorContraction}
Consider data $X^{(n)}=(X_{k\Delta})_{0\leq k \leq n}$ sampled from a solution $X$ to \Cref{eqn:sde} under \Cref{assumptions:b,assumptions:sigma,assumption:Delta,assumption:invariant}. Let the true parameter be $b_0$. Assume the appropriate sets below are measurable with respect to the $\sigma$--algebra $\Ss$.
\begin{thmenum}
\item \label{thm:SievePriorContraction}
Let $\Pi$ be a sieve prior on $\Theta$, i.e.\ let $\Pi=\sum_{m=1}^\infty h(m)\Pi_m$, where $\Pi_m(S_m\cap\Theta)=1$, for $S_m$ a periodic Meyer-type wavelet space of resolution $m$ as described in \Cref{sec:ApproxSpaces}, and $h$ some probability mass function on $\NN$. Suppose we have, for all $\eps>0$ and $m\in \NN$, and for some constants $\zeta,\beta_1,\beta_2,B_1,B_2>0,$ 
\begin{enumerate}[(i)]
\item \label{sievepriorhyperparameter} $B_1 e^{-\beta_1 D_m} \leq h(m) \leq B_2 e^{-\beta_2 D_m}$,
\item \label{sievepriorsmallball} $\Pi_m (\braces{b\in S_m : \norm{b-\pi_{m}b_0}_2\leq \eps})\geq (\eps \zeta)^{D_m}$, 
\end{enumerate}
where $\pi_m$ is the $L^2$--orthogonal projection onto $S_m$ and $D_m=\dim(S_m)=2^m$.
Then for some constant $M=M(A_0,s,\Ii,L_0,\beta_1,\beta_2,B_1,B_2,\zeta)$ we have, for any $b_0\in \Theta_s(A_0)$,
\[\Pi\brackets[\Big]{\braces[big]{b\in \Theta : \norm{b-b_0}_2 \leq M (n\Delta)^{-s/(1+2s)}\log(n\Delta)^{1/2}} \mid X^{(n)}}\to 1\] in probability under the law $P_{b_0}$ of $X$. 

\item \label{thm:sK0KnownContraction}
Suppose now $b_0\in \Theta_s(A_0)$ where $s\geq 1$ and $A_0>0$ are both known. Let $j_n\in \NN$ be such that $D_{j_n}\sim (n\Delta)^{1/(1+2s)},$ i.e.\ for some positive constants $L_1,L_2$ and all $n\in \NN$ let $L_1(n\Delta)^{1/(1+2s)}\leq D_{j_n}\leq L_2(n\Delta)^{1/(1+2s)}$. 
Let $(\Pi^{(n)})_{n\in\NN}$ be a sequence of priors satisfying, for some constant $\zeta>0$ and for $\eps_n=(n\Delta)^{-s/(1+2s)}\log(n\Delta)^{1/2}$,
\begin{enumerate}[(I)] 
\item \label{smoothnesspriorsupport} $\Pi^{(n)}(\Theta_s(A_0)\cap \Theta)=1$ for all $n$,
\item \label{smoothnesspriorsmallball} $\Pi^{(n)} (\braces{b \in \Theta : \norm{\pi_{j_n} b-\pi_{j_n}b_0}_2\leq \eps_n})\geq (\eps_n \zeta)^{D_{j_n}}$.
\end{enumerate} 
Then we achieve the same rate of contraction; i.e.\ for some $M=M(A_0,s,\Ii,L_0,\zeta)$,
\[\Pi^{(n)}\brackets[\Big]{\braces[\big]{b\in \Theta : \norm{b-b_0}_2 \leq M (n\Delta)^{-s/(1+2s)}\log(n\Delta)^{1/2}} \mid X^{(n)}}\to 1\] in probability under the law $P_{b_0}$ of $X$.
\end{thmenum}
\end{theorem}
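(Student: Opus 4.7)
The plan is to apply the Ghosal--Ghosh--van der Vaart (GGV) posterior contraction theorem for non-i.i.d.\ data \cite{GGV2000,Ghosal2017} with the effective sample size $n\Delta$ natural to the high-frequency regime. Writing $\eps_n=(n\Delta)^{-s/(1+2s)}\log(n\Delta)^{1/2}$, three ingredients are needed: (a) a KL-type prior mass bound $\Pi(B_n(b_0,\eps_n))\geq e^{-Cn\Delta\eps_n^2}$, where $B_n(b_0,\eps_n)$ consists of those $b$ with $\KL(P_{b_0}^{(n)},P_b^{(n)})\leq n\Delta\eps_n^2$ and $\Var_{P_{b_0}^{(n)}}(\log(p_{b_0}^{(n)}/p_b^{(n)}))\leq n\Delta\eps_n^2$; (b) sieves $\Theta_n\subseteq\Theta$ with $\Pi(\Theta_n^c)\leq e^{-(C+4)n\Delta\eps_n^2}$; (c) exponentially consistent tests for $b_0$ against $\{b\in\Theta_n:\norm{b-b_0}_2>M\eps_n\}$.

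For the prior mass bound I would take resolution $j_n$ with $D_{j_n}\asymp(n\Delta)^{1/(1+2s)}$, so that $n\Delta\eps_n^2\asymp D_{j_n}\log(n\Delta)$. The Besov approximation \eqref{eqn:ApproxSpaces} gives $\norm{\pi_{j_n}b_0-b_0}_2\lesssim A_0 2^{-j_n s}\lesssim\eps_n$, so the hypothesised $L^2$ small ball around $\pi_{j_n}b_0$ (hypothesis (ii) in Part A, combined with the hyperprior weight $h(j_n)\geq B_1 e^{-\beta_1 D_{j_n}}$ from (i); or (II) in Part B) yields prior mass $\geq (\zeta\eps_n)^{D_{j_n}}e^{-\beta_1 D_{j_n}}\geq e^{-C'n\Delta\eps_n^2}$ for an $L^2$-ball of radius $\asymp\eps_n$ around $b_0$. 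I would then invoke the small-ball result of \Cref{sec:SmallBallProbs} to upgrade this $L^2$ control to the KL and variance bounds defining $B_n(b_0,\eps_n)$.

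For the tests I would apply the concentration inequality of \Cref{sec:Concentration} to the projection estimator of Comte et al.\ \cite{Comte2007}, following the Gin\'e--Nickl strategy \cite{Gine2011a}. At resolution $m$ the estimator $\hat b^{(m)}$ concentrates around $\pi_m b$, uniformly in $b\in\Theta$, at $L^2$-scale $\sqrt{D_m/(n\Delta)}$ with sub-exponential tails of the form $\exp(-cn\Delta u^2)$. For each $m\leq M_n$ this yields a test distinguishing $b_0$ from $\{b\in S_m\cap\Theta:\norm{b-b_0}_2>M\eps_n\}$ via a minimal $\eps_n$-net, with type-I and type-II errors $\leq e^{-c'n\Delta\eps_n^2}$. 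Taking $\Theta_n=\bigcup_{m\leq M_n}(S_m\cap\Theta)$ with $2^{M_n}\asymp C'' n\Delta\eps_n^2/\log(n\Delta)$ in Part A, hypothesis (i) gives $\Pi(\Theta_n^c)\lesssim e^{-(C+4)n\Delta\eps_n^2}$, and the per-resolution tests combine by a union bound (the log-cardinality $\lesssim D_m\log(1/\eps_n)\lesssim n\Delta\eps_n^2$ is absorbed in the exponent). In Part B the sieve is $\Theta_s(A_0)$ directly from hypothesis (I), with the standard Besov metric entropy bound replacing the union over resolutions.

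The principal obstacle is the small-ball result of \Cref{sec:SmallBallProbs}. Because the discrete transition densities $p_b(\Delta,\cdot,\cdot)$ have no closed form, the KL and variance $V$ cannot be read off $\norm{b-b_0}_2$ directly. Adapting the argument of van der Meulen \& van Zanten \cite{vanderMeulenvanZanten2013} dominates the discrete KL by the continuous-time KL, given by the Girsanov functional $\tfrac12\int_0^{n\Delta}\sigma^{-2}(X_t)(b-b_0)^2(X_t)\dt$, whose expectation is $\asymp n\Delta\norm{b-b_0}_2^2$ by ergodicity and \Cref{assumption:invariant}. The genuinely new step is the variance bound: I would exploit $\Delta\to 0$ and an It\^o expansion of $\log(p_b/p_{b_0})$ on each sub-interval to replace each per-step log-likelihood increment by an approximately Gaussian martingale increment of conditional variance $\approx\Delta\sigma^{-2}(X_{(i-1)\Delta})(b-b_0)^2(X_{(i-1)\Delta})$, with remainders controlled uniformly in $b\in\Theta$ using \Cref{assumption:Delta}; summing over $i$ then gives $V\lesssim n\Delta\norm{b-b_0}_2^2$. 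With both bounds in hand, the GGV theorem combines (a), (b), (c) in routine fashion.
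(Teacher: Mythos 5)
Your overall skeleton -- the GGV machinery run at effective sample size $n\Delta$, the undersmoothed resolution $j_n$ with $D_{j_n}\asymp(n\Delta)^{1/(1+2s)}$ for the prior-mass computation, and the upgrade of the $L^2$ ball to the KL/variance neighbourhood via the continuous-time Girsanov functional -- matches the paper's route. The genuine gap is in the tests-plus-sieve step of Part A. You construct tests by putting a minimal $\eps_n$-net on each alternative set and union bounding, and to keep the net log-cardinality $D_m\log(1/\eps_n)$ below the testing exponent you cap the sieve at $2^{M_n}\asymp n\Delta\eps_n^2/\log(n\Delta)$. But then hypothesis (i) only yields $\Pi(\Theta_n^c)\lesssim e^{-\beta_2 2^{M_n}}=e^{-c\,n\Delta\eps_n^2/\log(n\Delta)}$, which is \emph{not} $\leq e^{-(C+4)n\Delta\eps_n^2}$ for large $n$, however large you take the constant in $M_n$, since $\log(n\Delta)\to\infty$; the term $e^{(C+2)n\Delta\eps_n^2}\,\Pi(\Theta_n^c)$ arising from the evidence lower bound then diverges, so the remaining-mass condition of the abstract theorem (\Cref{thm:AbstractContractionResult}) fails. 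Conversely, if you enlarge the sieve to $2^{M_n}\asymp n\Delta\eps_n^2$ to repair the remaining mass, the nets over $S_m\cap\Theta$ have log-cardinality $\asymp n\Delta\eps_n^2\log(n\Delta)$, which cannot be absorbed by a testing exponent $D\,n\Delta\eps_n^2$ with $D$ fixed (you could only recover by letting the separation constant grow with $n$, which degrades the rate by a further logarithmic factor). As stated, your construction cannot satisfy the prior-mass, remaining-mass and testing conditions simultaneously at the claimed rate.

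The paper resolves exactly this tension by dispensing with nets altogether: \Cref{lem:ExistenceOfTests} uses the single plug-in test $\psi_n=\II\braces{\norm{\tilde b_n-b_0}_2>C\eps_n}$ built from the minimum-contrast estimator of Comte et al., and \Cref{thm:bhat_concentrates} gives $P_b(\norm{\tilde b_n-b}_2>C\eps_n)\leq e^{-Dn\Delta\eps_n^2}$ \emph{uniformly over all $b$ in the sieve} (the sieve only needs the bias condition $\norm{\pi_{l_n}b-b}_2\leq\eps_n$ with $D_{l_n}\leq Ln\Delta\eps_n^2$), with $D$ arbitrarily large at the price of enlarging $C$. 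Since no union bound over alternatives is taken, the sieve can be kept at full dimension $D_{l_n}\asymp Ln\Delta\eps_n^2$, and $L$ is then chosen large enough that $\sum_{m>l_n}h(m)\lesssim e^{-\beta_2 D_{l_n}}\leq e^{-(\omega+4)n\Delta\eps_n^2}$. Note also that the concentration statement you invoke -- $\hat b^{(m)}$ concentrating around $\pi_m b$ uniformly over all $b\in\Theta$ at scale $\sqrt{D_m/(n\Delta)}$ with Gaussian-type tails -- is stronger than, and different from, what the paper proves (concentration around $b$ itself, at scale $\eps_n$, only over small-bias $b$), so it would need its own proof; if you use the estimator exactly in the paper's form the net is unnecessary anyway. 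The remainder of your outline (Part B with $\Theta_n=\Theta_s(A_0)$, and the small-ball step, where the paper bounds the discrete-data variance by the continuous-time one via conditioning and a H{\"o}lder-continuity remainder rather than a per-increment It{\^o} expansion, but in the same spirit) is consistent with the paper's argument.
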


\begin{remark}
\emph{Optimality.} The minimax lower bounds of Hoffmann \cite{Hoffmann1999b} do not strictly apply because we have assumed $\sigma$ is given. Nevertheless, the minimax rate in this model should be $(n\Delta)^{-s/(1+2s)}$. This follows by adapting arguments for the continuous data case from Kutoyants \cite{Kutoyants2004} Section 4.5 to apply to the periodic model and observing that with high-frequency data we cannot outperform continuous data.

Since a contraction rate of $\eps_n$ guarantees the existence of an estimator converging to the true parameter at rate $\eps_n$ (for example, the centre of the smallest posterior ball of mass at least 1/2 -- see Theorem 8.7 in Ghosal \& van der Vaart \cite{Ghosal2017}) the rates attained in \Cref{thm:ConcretePriorContraction} are optimal, up to the log factors.
\end{remark}

\subsection{Explicit examples of priors}\label{sec:ExplicitPriors}

Our results guarantee that the following priors will exhibit posterior contraction.
Throughout this \namecref{sec:ExplicitPriors} we continue to adopt \Cref{assumptions:b,assumptions:sigma,assumption:Delta,assumption:invariant}, and for technical convenience, we add an extra assumption on $b_0$. Precisely, recalling that $\braces{\psi_{lk}}$ form a family of Meyer-type wavelets as in \Cref{sec:ApproxSpaces} and $\psi_{-1,0}$ denotes the constant function 1, we assume the following. 
\begin{assumption}\label{assumption:b0inB_infty1^1}
For a sequence $(\tau_l)_{l\geq -1}$ to be specified and a constant $B$, we assume
\begin{equation}\label{eqn:b0inB_infty1^1} b_0=\sum_{\substack{l\geq-1\\0\leq k<2^l}} \tau_l \beta_{lk}\psi_{lk}, \quad \text{with } \abs{\beta_{lk}}\leq B \text{ for all } l\geq -1 \text{ and all } 0\leq k<2^l. \end{equation}
\end{assumption}

The explicit priors for which we prove contraction will be random wavelet series priors.
Let $u_{lk}\iidsim q$, where $q$ is a density on $\RR$ satisfying \[q(x)\geq \zeta \text{ for } \abs{x}\leq B, \quad \text{and}\quad q(x)=0 \text{ for } \abs{x}>B+1,\] where $\zeta>0$ is a constant and $B>0$ is the constant from \Cref{assumption:b0inB_infty1^1}. For example one might choose $q$ to be the density of a $\operatorname{Unif}[0,B]$ random variable or a truncated Gaussian density.

We define a prior $\Pi_m$ on $S_m$ as the law associated to a random wavelet series 
\begin{equation}\label{eqn:Pi_m} b(x)=\sum_{\substack{-1\leq l< m\\ 0\leq k <2^l}} \tau_l u_{lk}\psi_{lk}(x), \qquad x\in[0,1], \end{equation} for $\tau_l$ as in \Cref{assumption:b0inB_infty1^1}.
We give three examples of priors built from these $\Pi_m$. 
\begin{example}[Basic sieve prior]\label{ex:SievePrior}
Let $\tau_{-1}=\tau_0=1$ and $\tau_l=2^{-3l/2}l^{-2}$ for $l\geq 1$.
Let $h$ be a probability distribution on $\NN$ as described in \Cref{thm:SievePriorContraction}, for example, $h(m)=\gamma e^{-2^m},$ where $\gamma$ is a normalising constant. 
Let $\Pi=\sum_{m=1}^\infty h(m)\Pi_m$ where $\Pi_m$ is as above.
\end{example}
\begin{proposition}\label{prop:BasicSievePrior}
The preceding prior meets the conditions of \Cref{thm:SievePriorContraction} for any $b_0$ satisfying \Cref{assumption:b0inB_infty1^1} with the same $\tau_l$ used to define the prior, and for an appropriate constant $K_0$. Thus, if also $b_0\in \Theta_s(A_0)$ for some constant $A_0$, 
$\Pi \brackets*{\braces{b \in \Theta : \norm{b-b_0}_2\leq M (n\Delta)^{-s/(1+2s)}\log(n\Delta)^{1/2}} \mid X^{(n)}} \to 1$ in $P_{b_0}$-probability, for some constant $M$.
\end{proposition}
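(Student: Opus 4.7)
The goal is simply to verify the three conditions of \Cref{thm:SievePriorContraction}, namely that the prior is supported on $\Theta(K_0)$ for an appropriate $K_0$, the mixing weights $h(m)$ satisfy the exponential bound (\ref{sievepriorhyperparameter}), and the small-ball estimate (\ref{sievepriorsmallball}) holds. The adaptive contraction conclusion then follows directly from the theorem.

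First, I will check that draws from $\Pi_m$ (and hence from $\Pi$) almost surely lie in $\Theta(K_0)$ for a suitable $K_0$ depending only on $B$ and the wavelet basis. Since $|u_{lk}|\leq B+1$, the standard estimate $\sup_x\sum_k |\psi_{lk}(x)| \lesssim 2^{l/2}$ and its analogue with $\psi'_{lk}$ yield
\[\norm{b}_\infty \leq C(B+1)\sum_{l\geq -1} \tau_l 2^{l/2}, \qquad \norm{b'}_\infty \leq C(B+1)\sum_{l\geq -1} \tau_l 2^{3l/2}.\]
With the choice $\tau_l=2^{-3l/2}l^{-2}$ for $l\geq 1$ (and $\tau_{-1}=\tau_0=1$), both series reduce to sums of the form $\sum l^{-2}$, hence are finite. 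So $\norm{b}_{C_\per^1}\leq K_0$ for a constant $K_0$ determined by $B$ and the wavelet.

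Next, condition (\ref{sievepriorhyperparameter}) is immediate from the definition: with $h(m)=\gamma e^{-2^m}$ and $D_m=2^m$, both inequalities hold with $\beta_1=\beta_2=1$ and $B_1=B_2=\gamma$.

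The main step is the small-ball bound (\ref{sievepriorsmallball}). Given \Cref{assumption:b0inB_infty1^1}, the projection $\pi_m b_0$ has wavelet expansion $\sum_{l<m,k} \tau_l\beta_{lk}\psi_{lk}$, so by Parseval
\[\norm{b-\pi_m b_0}_2^2 = \sum_{-1\leq l<m,\,0\leq k<2^l}\tau_l^2 (u_{lk}-\beta_{lk})^2.\]
Since $\sum_l 2^l \tau_l^2 \leq C_\tau<\infty$ with the prescribed $\tau_l$, if $|u_{lk}-\beta_{lk}|\leq \delta$ for every $(l,k)$ with $l<m$ then $\norm{b-\pi_m b_0}_2 \leq \delta \sqrt{C_\tau}$. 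Set $\delta=\eps/\sqrt{C_\tau}$. Because $|\beta_{lk}|\leq B$ and $q\geq \zeta$ on $[-B,B]$, for $\delta\leq 1$ each interval $[\beta_{lk}-\delta,\beta_{lk}+\delta]$ meets $[-B,B]$ in a subinterval of length at least $\delta$, giving $\PP(|u_{lk}-\beta_{lk}|\leq \delta)\geq \zeta\delta$. By independence of the $u_{lk}$,
\[\Pi_m\brackets[\big]{\norm{b-\pi_m b_0}_2\leq \eps} \geq (\zeta\delta)^{D_m} = (\zeta'\eps)^{D_m}\]
with $\zeta'=\zeta/\sqrt{C_\tau}$, which is (\ref{sievepriorsmallball}) for $\eps$ below a fixed threshold; for larger $\eps$ the inequality is trivial after shrinking $\zeta'$ further. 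The only mild obstacle is the summability computation for $\sum 2^l\tau_l^2$ and the boundary issue when $|\beta_{lk}|$ is close to $B$, both of which are handled by the buffer built into the support of $q$ extending to $[-B-1,B+1]$.

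Applying \Cref{thm:SievePriorContraction} with these verified hypotheses yields the claimed contraction rate.
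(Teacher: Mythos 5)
Your proof is correct and follows essentially the same route as the paper: verify the hyperparameter bound directly, bound the $C^1_{\mathrm{per}}$ norm of draws to get $\Pi(\Theta)=1$ for a suitable $K_0$, and exploit the Parseval identity together with the summability of the $\tau_l$'s and the lower bound on $q$ to obtain the coordinatewise small-ball estimate. The one cosmetic difference is in establishing the $C^1$ bound: you compute $\norm{b}_\infty$ and $\norm{b'}_\infty$ directly from the pointwise wavelet estimates $\sup_x\sum_k|\psi_{lk}(x)|\lesssim 2^{l/2}$ and $\sup_x\sum_k|\psi'_{lk}(x)|\lesssim 2^{3l/2}$, whereas the paper bounds $\norm{b}_{B^1_{\infty,1}}$ via the wavelet characterisation and then invokes the embedding $B^1_{\infty,1}\hookrightarrow C^1_{\mathrm{per}}$; these are equivalent, and your version is slightly more elementary. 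One small imprecision to note: the boundary case $|\beta_{lk}|\approx B$ is handled because $q\geq\zeta$ on $[-B,B]$ and thus $[\beta_{lk}-\delta,\beta_{lk}+\delta]\cap[-B,B]$ already has length at least $\min(\delta,2B)$; the extension of $\mathrm{supp}(q)$ to $[-B-1,B+1]$ is irrelevant here since you only know $q\geq\zeta$ on the smaller interval. You should also record explicitly that the same $C^1$ computation with $|\beta_{lk}|\leq B$ gives $b_0\in\Theta(K_0)$, which is needed to apply the theorem.
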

The proof can be found in \Cref{sec:ExplicitPriorProofs}.
\begin{remark}\emph{Adaptive estimation.} If we assume $b_0\in \Theta_{s_\text{min}}(A_0)$, for some $s_\text{min}>3/2,$ \Cref{assumption:b0inB_infty1^1} automatically holds with $\tau_l$ as in \Cref{ex:SievePrior} for some constant $B=B(s_\text{min},A_0)$, as can be seen from the wavelet characterisation \Cref{eqn:B2inftywaveletcharacterisation}. Thus, in contrast to the low-frequency results of \cite{Nickl2017}, the above prior adapts to unknown $s$ in the range $s_{\text{min}}\leq s< \infty$.
\end{remark}

When $s> 1$ is known, we fix the rate of decay of wavelet coefficients to ensure a draw from the prior lies in $\Theta_s(A_0)$ by hand, rather than relying on the hyperparameter to choose the right resolution of wavelet space. We demonstrate with the following example. The proofs of \Cref{prop:KnownSmoothnessPrior,prop:InvariantDensityPrior}, also given in \Cref{sec:ExplicitPriorProofs}, mimic that of \Cref{prop:BasicSievePrior} but rely on \Cref{thm:sK0KnownContraction} in place of \Cref{thm:SievePriorContraction}. 

\begin{example}[Known smoothness prior]\label{ex:KnownSmoothnessPrior}
Let $\tau_{-1}=1$ and $\tau_l=2^{-l(s+1/2)}$ for $l\geq 0$.
Let $\bar{L}_n\in \NN\cup\braces{\infty}$. Define a sequence of priors $\Pi^{(n)}=\Pi_{\bar{L}_n}$ for $b$
(we can take $\bar{L}_n=\infty$ to have a genuine prior, but a sequence of priors will also work provided $\bar{L}_n\to \infty$ at a fast enough rate). 
\end{example}
\begin{proposition}\label{prop:KnownSmoothnessPrior}
Assume $\bar{L}_n/(n\Delta)^{1/(1+2s)}$ is bounded away from zero. 
Then for any $s>1$, the preceding sequence of priors meets the conditions of \Cref{thm:sK0KnownContraction} for any $b_0$ satisfying \Cref{assumption:b0inB_infty1^1} with the same $\tau_l$ used to define the prior, and for an appropriate constant $K_0$. Thus, for some constant $M$, $\Pi^{(n)} \brackets*{\braces{b \in \Theta : \norm{b-b_0}_2\leq M (n\Delta)^{-s/(1+2s)}\log(n\Delta)^{1/2}} \mid X^{(n)}} \to 1$ in $P_{b_0}$-probability.
\end{proposition}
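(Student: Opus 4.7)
The plan is to verify the two hypotheses of \Cref{thm:sK0KnownContraction}, closely paralleling the proof of \Cref{prop:BasicSievePrior} but simpler, since there is no hyperparameter over wavelet resolutions. Throughout I use that $\abs{u_{lk}} \leq B+1$ almost surely.

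\emph{Condition \ref{smoothnesspriorsupport}.} Using the wavelet coefficient bound $\abs{f_{lk}} = \abs{\tau_l u_{lk}} \leq (B+1)\,2^{-l(s+1/2)}$, I would compute, via the characterisation \Cref{eqn:B2inftywaveletcharacterisation},
\[ 2^{ls}\Bigl(\sum_{k=0}^{2^l-1} f_{lk}^2\Bigr)^{\!1/2} \leq (B+1)\, 2^{ls}\,2^{l/2}\tau_l = B+1, \]
so $\norm{b}_{B^s_{2,\infty}} \leq 2(B+1) =: A_0$ deterministically. For the $C^1_\per$ bound, I would invoke the standard Meyer-wavelet estimates $\norm{\psi_{lk}}_\infty \lesssim 2^{l/2}$, $\norm{\psi_{lk}'}_\infty \lesssim 2^{3l/2}$, together with pointwise bounds $\sum_k \abs{\psi_{lk}(x)} \lesssim 2^{l/2}$ and $\sum_k \abs{\psi_{lk}'(x)} \lesssim 2^{3l/2}$ (uniform in $x$, from the rapid decay of Meyer wavelets). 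These yield
\[ \norm{b}_\infty \lesssim (B+1)\sum_{l\geq 0} 2^{l/2}\tau_l \lesssim \sum_{l\geq 0} 2^{-ls}, \qquad \norm{b'}_\infty \lesssim (B+1)\sum_{l\geq 0} 2^{3l/2}\tau_l \lesssim \sum_{l\geq 0} 2^{l(1-s)}, \]
and both series converge precisely because $s > 1$; this is exactly where the hypothesis on the smoothness is used. A deterministic $K_0 = K_0(s,B)$ then makes $b \in \Theta(K_0)$ almost surely.

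\emph{Condition \ref{smoothnesspriorsmallball}.} The hypothesis $\bar L_n \gtrsim (n\Delta)^{1/(1+2s)}$ easily dominates $j_n \sim \log(n\Delta)/(1+2s)$, so $\bar L_n \geq j_n$ for $n$ large and hence $\pi_{j_n} b = \sum_{-1\leq l<j_n,\, 0\leq k<2^l} \tau_l u_{lk}\psi_{lk}$. By Parseval,
\[ \norm{\pi_{j_n} b - \pi_{j_n} b_0}_2^2 = \sum_{-1\leq l<j_n,\, 0\leq k<2^l} \tau_l^2 (u_{lk}-\beta_{lk})^2. \]
If $\abs{u_{lk}-\beta_{lk}} \leq \delta$ for every relevant $(l,k)$, the right side is at most $\delta^2\bigl(1 + \sum_{l\geq 0} 2^{-2ls}\bigr) =: \delta^2 C_s$ with $C_s < \infty$. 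Choosing $\delta = \varepsilon_n/\sqrt{C_s}$ reduces the small ball to the independent events $\abs{u_{lk}-\beta_{lk}} \leq \delta$. Since $q \geq \zeta$ on $[-B,B]$ and $\abs{\beta_{lk}}\leq B$, selecting a one-sided subinterval $[\beta_{lk},\beta_{lk}+\delta]$ or $[\beta_{lk}-\delta,\beta_{lk}]$ that lies inside $[-B,B]$ (possible once $\delta \leq B$, i.e.\ for $n$ large) gives $\PP(\abs{u_{lk}-\beta_{lk}}\leq\delta)\geq \zeta\delta$. Multiplying over the $D_{j_n}$ coefficients produces the required lower bound $(\zeta'\varepsilon_n)^{D_{j_n}}$ with $\zeta' = \zeta/\sqrt{C_s}$.

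With both conditions verified, the conclusion is immediate from \Cref{thm:sK0KnownContraction}. There is no real main obstacle: the choice $\tau_l = 2^{-l(s+1/2)}$ is exactly the decay that simultaneously calibrates the Besov norm (producing a dimension-free bound $A_0$), the $L^2$ small ball (producing a dimension-free constant $C_s$), and the per-coordinate density lower bound $\zeta\delta$ (producing the required $(\zeta'\varepsilon_n)^{D_{j_n}}$).
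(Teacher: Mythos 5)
Your proposal is correct and takes essentially the same route as the paper's proof: condition \Cref{smoothnesspriorsupport} is verified through the wavelet characterisations (the paper invokes the embedding of $B^1_{\infty,1}$ into $C^1_\per$ where you re-derive the $C^1_\per$ bound directly from Meyer-wavelet sup-norm estimates, and note that the identical computation with the coefficients $\beta_{lk}$ also gives $b_0\in\Theta\cap\Theta_s(A_0)$, which \Cref{thm:sK0KnownContraction} presupposes), and condition \Cref{smoothnesspriorsmallball} is obtained by exactly the paper's coordinate-wise argument: $j_n\leq \bar{L}_n$ for large $n$, summability of $\tau_l^2 2^l$ to reduce the $L^2$ ball to per-coordinate events, and the lower bound $q\geq\zeta$ on $[-B,B]$ giving $(\zeta'\eps_n)^{D_{j_n}}$.
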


\begin{remark}
\Cref{assumption:b0inB_infty1^1} with $\tau_l=2^{-l(s+1/2)}$ in fact forces $b_0\in B^s_{\infty,\infty} \subsetneq B^s_{2,\infty}$ with fixed norm bound. Restricting to this smaller set does not change the minimax rate, as can be seen from the fact that the functions by which Hoffmann perturbs in the lower bound proofs in \cite{Hoffmann1999b} lie in the smaller class addressed here. In principle, one could remove this assumption by taking $\tau_l=2^{-ls}$ and taking the prior $\Pi^{(n)}$ to be the law of $b\sim\Pi_{\bar{L}_n}$ conditional on $b\in\Theta_s(A_0)$.
\end{remark}

\begin{example}[Prior on the invariant density]\label{ex:InvariantDensityPrior}
In some applications it may be more natural to place a prior on the invariant density and only implicitly on the drift function. With minor adjustments, \Cref{thm:sK0KnownContraction} can still be applied to such priors. We outline the necessary adjustments.

\begin{enumerate}[(i)]
\item \label{item:densitypriorchange1} $b$ is not identifiable from $\pi_b$ and $\sigma^2$. We therefore introduce the identifiability constraint $I_b(1)=0$. We could fix $I_b(1)$ as any positive constant and reduce to the case $I_b(1)=0$ by a translation, so we choose $I_b(1)=0$ for simplicity (this assumption is standard in the periodic model, for example see van Waaij \& van Zanten \cite{vanWaaij2016}). With this restriction, we have $\pi_b(x)=\frac{e^{I_b(x)}}{G_b \sigma^2(x)}$ for a normalising constant $G_b$, so that $b=((\sigma^2)'+\sigma^2 (\log\pi_b)')/2$. 
\item \label{item:densitypriorchange2} In place of \Cref{assumption:b0inB_infty1^1}, we need a similar assumption but for $H_0:=\log\pi_{b_0}$. Precisely, we assume
\begin{equation} 
\label{eqn:H0condition} H_0=\sum_{\substack{l\geq-1\\0\leq k<2^l}} \tau_l h_{lk}\psi_{lk}, \quad \text{with } \abs{h_{lk}}\leq B \text{ for all } l\geq -1 \text{ and all } 0\leq k<2^l, \end{equation} for $\tau_{-1}=\tau_0=1$ and $\tau_l=2^{-l(s+3/2)}l^{-2}$ for $l\geq 1$, for some known constant $B$, and where $s\geq 1$ is assumed known.
\item \label{item:densitypriorchange3} Induce a prior on $b=((\sigma^2)'+\sigma^2 H')/2$ by putting the prior $\Pi^{(n)}=\Pi_{\bar{L}_n}$ on $H$, where $\bar{L}_n$ is as in \Cref{prop:KnownSmoothnessPrior}.
\item \label{item:densitypriorchange4} To ensure $b\in \Theta_s(A_0)$ we place further restrictions on $\sigma$; for example, we could assume $\sigma^2$ is smooth. More tightly, it is sufficient to assume (in addition to \Cref{assumptions:sigma}) that $\sigma^2\in \Theta_{s+1}(A_1)$ and $\norm{\sigma^2}_{C^s_\per}\leq A_1$, where $C^s_\per$ is the H{\"o}lder norm, for some $A_1>0$. These conditions on $\sigma$ can be bypassed with a more careful statement of \Cref{thm:sK0KnownContraction} and a more careful treatment of the bias.
\end{enumerate} 
\begin{proposition}\label{prop:InvariantDensityPrior}
Make changes \Cref{item:densitypriorchange1,item:densitypriorchange2,item:densitypriorchange3,item:densitypriorchange4} as listed. Then, the obtained sequence of priors meets the conditions of \Cref{thm:sK0KnownContraction} for an appropriate constant $K_0$, hence for some constant $M$ we have $\Pi^{(n)} \brackets*{\braces{b \in \Theta : \norm{b-b_0}_2\leq M (n\Delta)^{-s/(1+2s)}\log(n\Delta)^{1/2}} \mid X^{(n)}} \to 1$ in $P_{b_0}$-probability.
\end{proposition}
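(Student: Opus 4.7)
The plan is to follow the strategy of \Cref{prop:KnownSmoothnessPrior}, with the additional step of translating norm and small-ball estimates for the log-invariant density $H$ into corresponding statements for the induced drift $b = ((\sigma^2)' + \sigma^2 H')/2$. The two items to verify are conditions \ref{smoothnesspriorsupport} and \ref{smoothnesspriorsmallball} of \Cref{thm:sK0KnownContraction}: that the induced law is supported on $\Theta_s(A_0) \cap \Theta$, and that small balls around $b_0$ in $L^2$ receive enough prior mass.

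For the support, a draw $H$ under the prior is a (possibly truncated) wavelet series whose coefficients $\tau_l u_{lk}$ are bounded in modulus by $(B+1)2^{-l(s+3/2)}l^{-2}$, so \Cref{eqn:B2inftywaveletcharacterisation} gives a uniform bound on $\norm{H}_{B^{s+1}_{2,\infty}}$, and a direct summation with Meyer wavelet sup-norm bounds controls $\norm{H}_{C^2_\per}$ uniformly when $s \geq 1$. Since $s \geq 1 > 1/2$, the product estimate $\norm{fg}_{B^s_{2,\infty}} \lesssim \norm{f}_{C^s_\per}\norm{g}_{B^s_{2,\infty}}$ holds; combining with the hypothesis $\sigma^2 \in \Theta_{s+1}(A_1) \cap C^s_\per$ then yields a uniform bound on $\norm{b}_{B^s_{2,\infty}}$, and a pointwise argument using the same wavelet estimates bounds $\norm{b}_{C^1_\per}$. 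Taking $K_0$ and $A_0$ large enough in terms of $B,\zeta,A_1,\Ii$ then secures \ref{smoothnesspriorsupport}.

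For the small-ball condition, $b - b_0 = \sigma^2(H-H_0)'/2$ gives $\norm{b-b_0}_2 \leq (\sigma_U^2/2)\norm{(H-H_0)'}_2$, and since $L^2$-projection is a contraction it suffices to lower bound $\Pi^{(n)}(\norm{(H-H_0)'}_2 \leq c\eps_n)$ for $c = 2/\sigma_U^2$. Split $H-H_0$ into its support on levels $l < \bar L_n$ and the truncation tail. Using the Meyer characterisation $\norm{f'}_2^2 \sim \sum_{l,k}2^{2l}f_{lk}^2$, the tail contributes at most $B^2 \sum_{l\geq\bar L_n} 2^{-2ls}l^{-4} \lesssim 2^{-2\bar L_n s} = O(\eps_n^2)$, using that $\bar L_n/(n\Delta)^{1/(1+2s)}$ is bounded away from zero. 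On the event $\Ee := \{\abs{u_{lk}-h_{0,lk}}\leq\eps_n : -1 \leq l < j_n,\, 0 \leq k < 2^l\}$ (which constrains $D_{j_n} = 2^{j_n}$ coefficients) the elementary bound
\[ \sum_{l<\bar L_n,\,k}2^{2l}\tau_l^2 (u_{lk}-h_{0,lk})^2 \;\leq\; \eps_n^2 \sum_{l<j_n}2^{-2ls}l^{-4} + (2B+1)^2\sum_{l\geq j_n}2^{-2ls}l^{-4} \;\lesssim\; \eps_n^2 \]
shows that the remaining series $F := \sum_{l<\bar L_n,\,k}\tau_l(u_{lk}-h_{0,lk})\psi_{lk}$ satisfies $\norm{F'}_2 \lesssim \eps_n$. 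Independence of the $u_{lk}$ and the lower bound $q \geq \zeta$ on $[-B,B]$ give $\Pi^{(n)}(\Ee) \geq (\eps_n \zeta)^{D_{j_n}}$ once $\eps_n \leq 1$, verifying \ref{smoothnesspriorsmallball}, and \Cref{thm:sK0KnownContraction} completes the proof.

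The main obstacle is the one-derivative loss when passing from $H$ to $b$: wavelet-coefficient control on $H-H_0$ must be promoted to an $L^2$-bound on $(H-H_0)'$, and a Besov product estimate is needed to combine $H'$ with $\sigma^2$ to control $b$. This is the reason for taking $\tau_l = 2^{-l(s+3/2)}l^{-2}$, with an extra factor of $2^{-l}$ relative to the direct prior on $b$ in \Cref{ex:KnownSmoothnessPrior}, and for the strengthened hypothesis $\sigma^2 \in \Theta_{s+1}(A_1) \cap C^s_\per$ noted in item \ref{item:densitypriorchange4}.
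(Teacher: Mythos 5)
Your proposal is correct and follows essentially the same route as the paper: condition \Cref{smoothnesspriorsupport} via wavelet-coefficient bounds giving uniform $C^2_\per$ and $B^{s+1}_{2,\infty}$ control of $H$ (and $H_0$) together with the paraproduct estimate $\norm{\sigma^2 H'}_{B^s_{2,\infty}}\lesssim \norm{\sigma^2}_{C^s_\per}\norm{H'}_{B^s_{2,\infty}}$, and condition \Cref{smoothnesspriorsmallball} via $\norm{\pi_{j_n}b-\pi_{j_n}b_0}_2\leq \tfrac{1}{2}\sigma_U^2\norm{(H-H_0)'}_2$ plus coordinatewise closeness of the coefficients below level $j_n$. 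The only cosmetic differences are that you split $(H-H_0)'$ level-by-level (levels $\geq j_n$ and the $\bar L_n$-tail bounded by the a priori coefficient bounds) where the paper phrases the same computation as projection-bias terms in $\norm{\cdot}_{B^1_{2,2}}$, and that your event should be taken with radius $\kappa\eps_n$ for a small constant $\kappa$ so the resulting bound is exactly $\eps_n$, the constant being absorbed into $\zeta$ exactly as in the paper's proof.
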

\end{example}

\begin{remarks}
\emph{Minimax rates.} The assumption \Cref{eqn:H0condition} restricts $b_0$ beyond simply lying in $\Theta_s(A_0)$. 
As with Nickl \& S{\"o}hl \cite{Nickl2017} Remark 5, this further restriction does not change the minimax rates, except for a log factor induced by the weights $l^{-2}$. 

\emph{Adaptation to sampling regime.} 
The prior of \Cref{prop:InvariantDensityPrior} is the same as the prior on $b$ in \cite{Nickl2017}. However, since here we assume $\sigma$ is given while in \cite{Nickl2017} it is an unknown parameter, the results of \cite{Nickl2017} do not immediately yield contraction of this prior at a near-minimax rate in the low-frequency setting. In particular, when $\sigma$ is known the minimax rate for estimating $b$ with low-frequency data is $n^{-s/(2s+3)}$ (for example see S{\"o}hl \& Trabs \cite{Sohl2016}), rather than the slower rate $n^{-s/(2s+5)}$ attained in Gobet--Hoffmann--Reiss \cite{Gobet2004} when $\sigma$ is unknown (this improvement is possible because one bypasses the delicate interweaving of the problems of estimating $b$ and $\sigma$ with low-frequency data). Nevertheless, the prior of \Cref{prop:InvariantDensityPrior} will indeed exhibit near-minimax contraction also in the low-frequency setting. An outline of the proof is as follows. The small ball results of \cite{Nickl2017} still apply, with minor changes to the periodic model used here in place of their reflected diffusion, so it is enough to exhibit tests of the true parameter against suitably separated alternatives. The identification $b=((\sigma^2)'+\sigma^2 (\log\pi_b)')/2$ means one can work with the invariant density rather than directly with the drift. Finally one shows the estimator from \cite{Sohl2016} exhibits sufficiently good concentration properties (alternatively, one could use general results for Markov chains from Ghosal \& van der Vaart \cite{Ghosal2007}).

It remains an interesting open problem to simultaneously estimate $b$ and $\sigma$ with a method which adapts to the sampling regime. Extending the proofs of this paper to the case where $\sigma$ is unknown would show that the Bayesian method fulfils this goal. The key difficulty in making this extension arises in the small ball section (\Cref{sec:SmallBallProbs}), because Girsanov's Theorem does not apply to diffusions with different diffusion coefficients.

\emph{Intermediate sampling regime.}
Strictly speaking, we only demonstrate robustness to the sampling regime in the extreme cases where $\Delta>0$ is fixed or where $n\Delta^2\to 0$. The author is not aware of any papers addressing the intermediate regime (where $\Delta$ tends to $0$ at a slower rate than $n^{-1/2}$) for a nonparametric model: the minimax rates do not even appear in the literature. Since the Bayesian method adapts to the extreme regimes, one expects that it attains the correct rates in this intermediate regime (up to log factors). However, the proof would require substantial extra work, primarily in exhibiting an estimator with good concentration properties in this regime. Kessler's work on the intermediate regime in the parametric case \cite{Kessler1997} would be a natural starting point for exploring this regime in the nonparametric setting.
\end{remarks}

\section{Construction of tests}\label{sec:Concentration}
In this section we construct the tests needed to apply the general contraction rate theory from Ghosal--Ghosh--van der Vaart \cite{GGV2000}. The main result of this section is the following. Recall that $S_m$ is a periodic Meyer-type wavelet space of resolution $m$ as described in \Cref{sec:ApproxSpaces}, $\pi_m$ is the $L^2$--orthogonal projection onto $S_m$ and $D_m=\dim(S_m)=2^m$.
\begin{lemma}\label{lem:ExistenceOfTests} Consider data $X^{(n)}=(X_{k\Delta})_{0\leq k \leq n}$ sampled from a solution $X$ to \Cref{eqn:sde} under \Cref{assumptions:b,assumptions:sigma,assumption:Delta,assumption:invariant}. Let $\eps_n\to 0$ be a sequence of positive numbers and let $l_n\to \infty$ be a sequence of positive integers such that $n\Delta\eps_n^2/\log(n\Delta)\to \infty$ and, for some constant $L$ and all $n$, $D_{l_n}\leq L n\Delta \eps_n^2$. Let $\Theta_n\subseteq \braces{b\in \Theta \st \norm{\pi_{l_n}b-b}_2\leq \eps_n}$ contain $b_0$.

Then for any $D>0$, there is an $M=M(\Ii,L_0,D,L)>0$ for which there exist tests $\psi_n$ (i.e.\ $\braces{0,1}$--valued functions of the data) such that, for all $n$ sufficiently large,
\[ \max \brackets[\Big]{ E_{b_0} \psi_n (X^{(n)}), \sup \braces[\big]{ E_b \sqbrackets{1- \psi_n(X^{(n)})} : b\in\Theta_n,  \norm{b-b_0}_2>M\eps_n} }
\leq e^{-Dn\Delta\eps_n^2}. \]
\end{lemma}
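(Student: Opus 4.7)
The plan is to follow the Giné--Nickl strategy referenced in the introduction: build the tests from a concentration inequality for a frequentist estimator. Let $\hat{b}_n \in S_{l_n}$ be the Comte et al.~\cite{Comte2007} projection-type estimator
\[
\hat{b}_n := \argmin_{f \in S_{l_n}} \frac{1}{n}\sum_{i=1}^n \brackets[\Big]{\tfrac{X_{i\Delta}-X_{(i-1)\Delta}}{\Delta} - f(X_{(i-1)\Delta})}^2,
\]
and take $\psi_n := \II\braces{\norm{\hat{b}_n-b_0}_2 > M\eps_n/2}$. The type I error is exactly $P_{b_0}(\norm{\hat{b}_n-b_0}_2 > M\eps_n/2)$, while for any $b\in\Theta_n$ with $\norm{b-b_0}_2 > M\eps_n$ the triangle inequality reduces the type II error to $P_b(\norm{\hat{b}_n-b}_2 \geq M\eps_n/2)$. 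It therefore suffices to prove the one-sided concentration bound $\sup_{b\in\Theta_n} P_b(\norm{\hat{b}_n-b}_2 \geq M\eps_n/2) \leq e^{-Dn\Delta\eps_n^2}$ for $M=M(\Ii,L_0,D,L)$ sufficiently large.

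To do this, I decompose $\hat{b}_n - b = (\hat{b}_n - \pi_{l_n}b) + (\pi_{l_n}b - b)$. The bias $\norm{\pi_{l_n}b-b}_2 \leq \eps_n$ by the defining property of $\Theta_n$, so taking $M\geq 4$ reduces to the stochastic term. Plugging the SDE into the definition of $\hat{b}_n$ gives
\[
\tfrac{X_{i\Delta}-X_{(i-1)\Delta}}{\Delta} = b(X_{(i-1)\Delta}) + R_i + Z_i, \qquad Z_i := \tfrac{1}{\Delta}\int_{(i-1)\Delta}^{i\Delta}\sigma(X_s)\dW_s,
\]
with drift residual $R_i := \tfrac{1}{\Delta}\int_{(i-1)\Delta}^{i\Delta}(b(X_s)-b(X_{(i-1)\Delta}))\ds$. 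The wavelet coefficients of $\hat{b}_n - \pi_{l_n}b$, expressed relative to the empirical inner product $\ip{f,g}_n := n^{-1}\sum_i f(X_{(i-1)\Delta})g(X_{(i-1)\Delta})$, then split into a centred martingale sum driven by the $Z_i$ and a remainder driven by the $R_i$.

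I next control these pieces using the three ingredients signposted in the introduction. First, a Markov chain concentration result for additive functionals of $(X_{k\Delta})$, of the type of \Cref{thm:concentration-for-diffusions}, applied to $\psi_{lk}\psi_{l'k'}$ for $l,l' < l_n$ yields, with probability at least $1-e^{-Dn\Delta\eps_n^2}$, an operator-norm equivalence between the empirical Gram matrix and its $L^2$ counterpart on $S_{l_n}$ (using $\pi_L \leq \pi_b \leq \pi_U$); this is where the assumption $D_{l_n}\leq L n\Delta\eps_n^2$ is absorbed, via a union bound over $O(D_{l_n}^2)$ wavelet pairs and the slack $n\Delta\eps_n^2/\log(n\Delta)\to\infty$. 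Second, a Bernstein inequality for discrete-time martingales, applied to the sums $\sum_i \psi_{lk}(X_{(i-1)\Delta})\int_{(i-1)\Delta}^{i\Delta}\sigma(X_s)\dW_s$, handles the martingale term: its predictable quadratic variation is controlled by the same Gram-matrix equivalence and by $\sigma\leq\sigma_U$, producing an exponential bound at scale $\eps_n$. Third, Lipschitz continuity of $b$ (with constant $K_0$) combined with the $1/2$-Hölder regularity of the paths of $X$ makes $R_i$ pathwise $O(\sqrt\Delta)$, so its contribution to $\hat{b}_n-\pi_{l_n}b$ is lower order than $\eps_n$ under \Cref{assumption:Delta}.

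The main obstacle I expect is reconciling the three scales in this concentration step: Markov chain mixing bounds degrade as $\Delta\to 0$, so the low-frequency argument cannot be directly ported. The key is a clean separation of roles — mixing is used \emph{only} for the Gram matrix, where what matters is the effective sample size $n\Delta$; martingale concentration is used directly on the noise increments, bypassing mixing entirely; and pathwise Hölder regularity handles the drift residuals, where $\Delta\to 0$ is in fact an \emph{advantage}. Bookkeeping must ensure that all constants depend only on $\Ii$ and $L_0$, so that the exponential bound is uniform in $b\in\Theta_n$; this is precisely where the uniform bounds $\pi_L\leq\pi_b\leq\pi_U$ and $\sigma_L\leq\sigma\leq\sigma_U$ from \Cref{assumptions:b,assumptions:sigma} become essential.
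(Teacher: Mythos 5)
Your reduction of the lemma to a one-sided concentration inequality for a projection estimator is exactly the route the paper takes: the test $\psi_n=\II\{\norm{\hat b_n-b_0}_2>M\eps_n/2\}$, the triangle-inequality argument for the type~II error, and the target bound $\sup_{b\in\Theta_n}P_b(\norm{\hat b_n-b}_2\geq M\eps_n/2)\leq e^{-Dn\Delta\eps_n^2}$ all match \Cref{thm:bhat_concentrates} in the paper, of which this lemma is a two-line corollary with $M=2C$.

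There is, however, a genuine gap in your sketch of that concentration inequality, and it defeats the stated generality. You take $\hat b_n$ to be the \emph{unconstrained} least-squares projection over $S_{l_n}$, but the paper's estimator $\tilde b_n$ minimises $\gamma_n$ only over the sup-norm--capped set $\tilde S_{l_n}=\{u\in S_{l_n}:\norm{u}_\infty\leq K_0+1\}$, and this cap is load-bearing. In the paper's Step~1, the only a priori control on the ratio $\norm{\tilde b_n-\pi_{l_n}b}_\infty/\norm{\tilde b_n-\pi_{l_n}b}_\mu$ comes from the bound $\norm{\tilde b_n}_\infty\leq K_0+1$; without it, the normalised direction $\tilde t_n$ cannot be forced into the set $I_n=\{t\in S_{l_n}:\norm t_\mu=1,\norm t_\infty\leq C_1\eps_n^{-1}\}$, and the event $\Aa_n^c$ cannot be disposed of deterministically. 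Correspondingly, your plan to establish full operator-norm equivalence of the empirical and population Gram matrices on $S_{l_n}$ (via a union bound over $O(D_{l_n}^2)$ wavelet pairs) overshoots: the Markov-chain concentration bound has a linear term scaling like $\norm{t^2}_\infty/(n\Delta)$, and on the unrestricted unit $\mu$-sphere of $S_{l_n}$ one only has $\norm{t}_\infty\lesssim D_{l_n}^{1/2}$, which makes that term of order $D_{l_n}(D_{l_n}+Dn\Delta\eps_n^2)/(n\Delta)$. Under the lemma's hypotheses ($\eps_n\to0$, $n\Delta\eps_n^2/\log(n\Delta)\to\infty$, $D_{l_n}\leq Ln\Delta\eps_n^2$) this need not be $o(1)$ --- take $\eps_n=1/\log(n\Delta)$ and $D_{l_n}\asymp n\Delta\eps_n^2$ --- so the unrestricted Gram-matrix event cannot have the required exponentially small failure probability. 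The paper's fix is to prove norm equivalence only on the restricted $I_n$ (via the chaining bound \Cref{eqn:DiffusionConcentrationSupForm}, not an entrywise union bound), and then to use the sup-norm cap on the estimator to guarantee that $\tilde t_n$ lands in $I_n$ on the event that matters. You would need both modifications --- the constrained estimator and the restricted norm-equivalence event --- to close the argument. A minor further point: your assertion that the Bernstein/martingale step ``bypasses mixing entirely'' is not quite how the paper proceeds; in \Cref{lem:nu} the martingale bound is combined with the Markov concentration inequality (again via chaining over $I_n$) to control the conditioning event $\{\norm t_n^2\leq u^2\}$.
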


The proof is given in \Cref{sec:ComteConcentration} and is a straightforward consequence of our constructing an estimator with appropriate concentration properties. First, we introduce some general concentration results we will need. 
\subsection{General concentration results} 
We will use three forms of concentration results as building blocks for our theorems. The first comes from viewing the data $(X_{j\Delta})_{0\leq j\leq n}$ as a Markov chain and applying Markov chain concentration results; these results are similar to those used in Nickl \& S{\"o}hl \cite{Nickl2017} for the low-frequency case, but here we need to track the dependence of constants on $\Delta$. 
The second form are useful only in the high-frequency case because they use a quantitative form of H{\"o}lder continuity for diffusion processes.
An inequality of the third form, based on martingale properties, is introduced only where needed (in \Cref{lem:nu}).
\subsubsection{Markov chain concentration results applied to diffusions}
Our main concentration result arising from the Markov structure is the following. We denote by $\norm{\cdot}_\mu$ the  $L^2_\mu([0,1])$--norm, $\norm{f}_\mu^2=E_\mu[f^2]=\int_0^1 f(x)^2\dmu(x)$.

\begin{theorem}\label{thm:concentration-for-diffusions}  There exists a constant $\kappa=\kappa(\Ii)$ such that, for all $n$ sufficiently large and all bounded 1--periodic functions $f:\RR\to \RR$,
\begin{equation} \label{eqn:DiffusionConcentrationMinForm}P_b \brackets*{ \abs[\Big]{\sum_{k=1}^n f(X_{k\Delta})-E_\mu [f]}\geq t } \leq 2 \exp \brackets*{ -\frac{1}{\kappa} \Delta \min \brackets*{\frac{t^2}{n\norm{f}_\mu^2},\frac{t}{\norm{f}_\infty} }},\end{equation}
or equivalently 
\begin{equation}\label{eqn:DiffusionConcentrationMaxForm}P_{b}\brackets*{\abs[\Big]{\sum_{j=1}^{n} f(X_{j\Delta})-E_\mu [f]}\geq \max(\sqrt{\kappa v^2x},\kappa ux)} \leq 2e^{-x},\end{equation}
where $v^2= n \Delta^{-1} \norm{f}_\mu^2$ and $u= \Delta^{-1}\norm{f}_\infty$.

Further, if $\Ff$ is a space of such functions indexed by some (subset of a) $d$--dimensional vector space, then for $V^2=\sup_{f\in\Ff} v^2$ and $U=\sup_{f\in\Ff} u$, we also have 
\begin{equation}\label{eqn:DiffusionConcentrationSupForm}P_{b}\brackets*{ \sup_{f\in \Ff} \abs[\Big]{\sum_{j=1}^{n} f(X_{j\Delta})-E_\mu[f]}\geq \tilde{\kappa}\max\braces*{\sqrt{V^2(d+x)},U(d+x)}}\leq 4e^{-x}.\end{equation}
for some constant $\tilde{\kappa}=\tilde{\kappa}(\Ii)$.
\end{theorem}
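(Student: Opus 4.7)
Since $f$ is $1$--periodic we have $f(X_{k\Delta})=f(\dot X_{k\Delta})$, so it suffices to work with the projected chain $(\dot X_{k\Delta})_{k\geq 0}$ on $[0,1]$. Under \Cref{assumption:invariant} this chain is stationary with invariant law $\mu_b$. My plan is to (i) establish a $\Delta$--explicit spectral gap / mixing estimate for the transition operator $P_\Delta$, (ii) feed it into an off--the--shelf Bernstein inequality for stationary Markov chains in order to get \Cref{eqn:DiffusionConcentrationMinForm}, (iii) verify the algebraic equivalence with \Cref{eqn:DiffusionConcentrationMaxForm}, and (iv) upgrade to the supremum form by an $\varepsilon$--net argument.

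The first step is the crux. The generator $\Ll_b=\tfrac12\sigma^2\partial_x^2+b\partial_x$ of $\dot X$ is uniformly elliptic on the torus with coefficients in $\Ii$, and its invariant density $\pi_b$ is pinched between $\pi_L,\pi_U$. Standard Poincar\'e/Dirichlet form arguments on the torus (or a direct coupling using non-degeneracy of $\sigma$) therefore yield a constant $\lambda=\lambda(\Ii)>0$ such that the continuous-time semigroup $(P_t)$ satisfies $\|P_tf-E_\mu f\|_\mu\leq e^{-\lambda t}\|f-E_\mu f\|_\mu$ for every $f\in L^2_\mu$. Hence the discrete chain has spectral gap $\gamma=1-e^{-\lambda\Delta}\geq c(\Ii)\Delta$ for all sufficiently small $\Delta$. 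Alternative routes to the same bound are the Meyn--Tweedie small-set drift conditions, which transfer directly from the continuous process by using compactness of $[0,1]$ and the uniform lower bound $\pi_L$ on $\pi_b$.

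Given the gap $\gamma\gtrsim \Delta$ I would invoke a Bernstein-type tail bound for stationary reversible Markov chains of the type proved by Paulin (2015) or Adamczak (2008): for any bounded centered $f$,
\[
P_b\!\left(\Big|\sum_{k=1}^n f(\dot X_{k\Delta})\Big|\geq t\right)\leq 2\exp\!\left(-C\gamma\min\!\Big(\tfrac{t^2}{n\|f\|_\mu^2},\tfrac{t}{\|f\|_\infty}\Big)\right).
\]
Substituting $\gamma\gtrsim \Delta$ (and replacing $f$ by $f-E_\mu f$, which only changes $\|f\|_\infty$ by a factor of $2$) gives \Cref{eqn:DiffusionConcentrationMinForm}. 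The equivalence between \Cref{eqn:DiffusionConcentrationMinForm} and \Cref{eqn:DiffusionConcentrationMaxForm} is a direct algebraic check: setting $t=\max(\sqrt{\kappa v^2x},\kappa u x)$ with $v^2=n\Delta^{-1}\|f\|_\mu^2$ and $u=\Delta^{-1}\|f\|_\infty$ exactly produces the two branches of the min.

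For the supremum bound \Cref{eqn:DiffusionConcentrationSupForm}, I would fix a linear basis of the $d$--dimensional ambient vector space containing $\Ff$, identify $\Ff$ with a subset of $\RR^d$, and cover the relevant index set with a $\delta$--net $\Nn_\delta$ of cardinality $\leq (c/\delta)^d$ in the natural $L^2_\mu\oplus L^\infty$ metric (using that $v^2$ and $u$ are bi-Lipschitz in the coefficients). A union bound over $\Nn_\delta$ in \Cref{eqn:DiffusionConcentrationMaxForm} applied at level $x+d\log(c/\delta)$ handles the net; the residual fluctuation across cells of the net is controlled by taking $\delta$ polynomial in $1/n$, which is absorbed in the constant $\tilde\kappa$. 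Choosing $\delta$ so that the discretisation error is comparable to the stated bound yields the $\max(\sqrt{V^2(d+x)},U(d+x))$ form.

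The main technical obstacle is making every constant \emph{uniform} in $b\in\Theta$: the gap $\lambda$, the warm-start correction associated with the Markov-chain Bernstein inequality, and the net constants all need to be controlled by $\Ii$ alone. Uniformity of $\lambda$ follows from compactness of the torus and the uniform bounds $\sigma_L,\sigma_U,\pi_L,\pi_U$, and the warm-start factor is trivialised by the exact stationarity guaranteed under \Cref{assumption:invariant}; it is these two ingredients which make the Markov-chain Bernstein inequality deliver a bound depending only on $\Ii$ and not on $b_0$.
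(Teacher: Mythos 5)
Your overall architecture (reduce to the periodised chain $\dot X$, prove a quantitative ergodicity estimate uniform in $b\in\Theta$ with rate proportional to $\Delta$, feed it into a Markov-chain Bernstein inequality, then pass to the supremum form) is close in spirit to the paper, and your Dirichlet-form computation for the $L^2(\mu)$ decay is sound even without symmetry, since the antisymmetric part of the generator does not contribute to $\ip{\Ll_b f,f}_\mu$. However, there is a genuine gap in step (ii): the chain $(\dot X_{k\Delta})$ is reversible with respect to $\mu_b$ only in the special case $I_b(1)=0$ (the remark after the paper's proof makes exactly this point); for general $b\in\Theta$ the periodised diffusion carries a nonzero stationary current and the transition operator $P_\Delta$ is not self-adjoint in $L^2(\mu)$, so the spectral-gap Bernstein inequality ``for stationary reversible Markov chains'' that you quote does not apply, and the quantity $1-e^{-\lambda\Delta}$ you extract from the $L^2$ contraction is not a spectral gap in the sense those results require. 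To repair this you must either pass to a non-reversible version (e.g.\ Paulin's pseudo-spectral-gap inequality, noting that the $L^2_0$ operator-norm contraction bound yields a pseudo-spectral gap of order $\Delta$), or do what the paper does: deduce uniform ergodicity in total variation from Bhattacharya et al., bound the mixing time by a constant times $\Delta^{-1}$, and apply Paulin's mixing-time (non-reversible) inequality, Theorem 3.4 rather than Theorem 3.3 of \cite{Paulin2015}.

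The second genuine gap is in your supremum step. A single-scale $\delta$-net with $\delta$ polynomial in $1/n$ costs $d\log(c/\delta)\asymp d\log n$ in the union bound, so what you obtain is a tail bound at level $\max\brackets[\big]{\sqrt{\kappa V^2(x+Cd\log n)},\,\kappa U(x+Cd\log n)}$; the $\log n$ cannot be ``absorbed into the constant $\tilde\kappa$'', and the result is strictly weaker than the claimed $(d+x)$ form of \Cref{eqn:DiffusionConcentrationSupForm}. The shortcut that would avoid this (a fixed $1/2$-net plus linearity of the process in the index) is unavailable at the stated level of generality, because the theorem is later applied to classes such as $\braces{t^2 : t\in I_n}$, which are indexed by a $d$-dimensional parameter but on which the empirical process is not linear. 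One needs a genuine multi-scale chaining argument driven by the two-regime tail \Cref{eqn:DiffusionConcentrationMaxForm}, which is exactly how the paper proceeds (Baraud's Theorem 2.1 with \Cref{eqn:DiffusionConcentrationMaxForm} replacing his Assumption 2.1). The loss is not cosmetic: in Step 2 of the proof of \Cref{thm:bhat_concentrates} the bound is used with $d=D_{l_n}\leq Ln\Delta\eps_n^2$ and $x=Dn\Delta\eps_n^2$, and the argument requires choosing a constant $C_1$ independent of $n$, which an extra $\log n$ factor in place of $d+x$ would make impossible.
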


The proof is an application of the following abstract result for Markov chains. 
 
\begin{theorem}[Paulin \cite{Paulin2015}, Proposition 3.4 and Theorem 3.4]\label{thm:Paulin}
Let $M_1,\dots,M_n$ be a time-homogeneous Markov chain taking values in $S$ with transition kernel $P(x,\dy)$ and invariant density $\pi$. Suppose $M$ is uniformly ergodic, i.e.\ $\sup_{x\in S} \norm{P^n(x,\cdot)-\pi}_{TV}\leq K\rho^n$ for some constants $K<\infty$, $\rho<1$, where $P^n(x,\cdot)$ is the $n-$step transition kernel and $\norm{\cdot}_{TV}$ is the total variation norm for signed measures. Write $t_\text{mix}=\min\braces{n\geq 0 : \sup_{x\in S}\norm{P^n(x,\cdot)-\pi}_{TV}<1/4}.$ 
Suppose $M_1\sim \pi$ and $f: S\to \RR$ is bounded. Let $V_f=\Var[f(M_1)]$, let $C=\norm{f-E[f(M_1)]}_\infty$. Then
\[ P \brackets*{ \abs{\sum_{i=1}^n f(M_i)-E [f(M_i)]}\geq t}\leq 2\exp\brackets*{ \frac{-t^2}{2t_{\text{mix}}(8(n+2t_{\text{mix}})V_f+20tC)}}.\]
\end{theorem}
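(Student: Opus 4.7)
The plan is to view the discrete data $(X_{k\Delta})_{k=0}^n$ as a time-homogeneous Markov chain on $\RR$ and appeal to the periodicity of $f$ to reduce to its periodised image $\dot X_{k\Delta}=X_{k\Delta}\bmod 1$, which is a Markov chain on $[0,1)$ with invariant law $\mu_b$ (density $\pi_b$). Since $\dot X_0\sim\mu_b$ by \Cref{assumption:invariant} and $\mu_b$ is invariant for $\dot X$, one has $E_b f(X_{k\Delta})=E_\mu f$ for each $k$, so the centering in \Cref{thm:Paulin} matches the centering in the statement. Replacing $f$ by $f-E_\mu f$ is harmless and gives $\|f-E_\mu f\|_\infty\le 2\|f\|_\infty$ and $\Var_\mu(f)\le\|f\|_\mu^2$.

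The first main step is to establish a mixing time bound $t_{\text{mix}}\le C(\Ii)/\Delta$ for the chain $(\dot X_{k\Delta})$. For the continuous-time periodised diffusion on the torus, the generator $L_b=b\partial_x+\tfrac12\sigma^2\partial_x^2$ is uniformly elliptic with smooth coefficients, and the invariant density is bounded between $\pi_L$ and $\pi_U$ (both depending only on $\Ii$). A standard one-dimensional Poincar\'e inequality on the torus together with these density bounds yields a spectral gap $\lambda=\lambda(\Ii)>0$ and a constant $C_1=C_1(\Ii)$ such that $\sup_{x\in[0,1)}\|P^b_t(x,\cdot)-\mu_b\|_{TV}\le C_1 e^{-\lambda t}$. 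Sampling at multiples of $\Delta$ then gives $\sup_x\|P^{b,k}_\Delta(x,\cdot)-\mu_b\|_{TV}\le C_1 e^{-\lambda k\Delta}$, whence $t_{\text{mix}}\le \lceil \lambda^{-1}\log(4C_1)\rceil/\Delta\le C(\Ii)/\Delta$ for all $\Delta$ small enough.

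With this mixing bound in hand, the second step is a direct substitution into \Cref{thm:Paulin} applied to $\tilde f=f-E_\mu f$: for $n\Delta$ sufficiently large one has $n+2t_{\text{mix}}\le 2n$, so the denominator in Paulin's exponent is bounded by $C_2(\Ii)\Delta^{-1}\bigl(n\|f\|_\mu^2+t\|f\|_\infty\bigr)$, producing
\[P_b\Bigl(\bigl|\textstyle\sum_k f(X_{k\Delta})-nE_\mu f\bigr|\ge t\Bigr)\le 2\exp\!\Bigl(-\tfrac{t^2\Delta}{C_2(n\|f\|_\mu^2+t\|f\|_\infty)}\Bigr).\]
The elementary inequality $(a+b)^{-1}\ge\tfrac12\min(a^{-1},b^{-1})$ then converts this into the minimum form \eqref{eqn:DiffusionConcentrationMinForm} with $\kappa=2C_2$. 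The equivalent maximum form \eqref{eqn:DiffusionConcentrationMaxForm} is obtained by the standard inversion: if $t=\max(\sqrt{\kappa v^2 x},\kappa ux)$ with $v^2=n\Delta^{-1}\|f\|_\mu^2$ and $u=\Delta^{-1}\|f\|_\infty$, then in either branch $\tfrac{\Delta}{\kappa}\min(t^2/(n\|f\|_\mu^2),t/\|f\|_\infty)\ge x$.

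For the supremum bound \eqref{eqn:DiffusionConcentrationSupForm} over a $d$-dimensional family $\Ff$, I would pass to an $\epsilon$-net in the parametrising vector space. Exploiting the $d$-dimensional structure and the uniform $L^2(\mu)$ and $L^\infty$ bounds $V^2$ and $U$ over $\Ff$, one chooses an $\epsilon$-net $\Ff_\epsilon\subset\Ff$ with $|\Ff_\epsilon|\le (c_0/\epsilon)^d$ (for instance by covering the coefficient polytope), applies \eqref{eqn:DiffusionConcentrationMaxForm} at each $f\in\Ff_\epsilon$ with deviation level $x+d\log(c_0/\epsilon)$, and takes a union bound. The remaining oscillation $\sup_{\|f-g\|\le\epsilon}|\sum_j(f-g)(X_{j\Delta})-nE_\mu(f-g)|$ is controlled deterministically using the ambient norm bounds (at cost of doubling $\tilde\kappa$), since $\Ff$ sits in a $d$-dimensional space on which all norms are comparable up to constants depending only on $d$ and the basis. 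Choosing $\epsilon$ to be a small absolute constant absorbs the $\log(c_0/\epsilon)$ into the $+d$ term and yields \eqref{eqn:DiffusionConcentrationSupForm}.

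The main obstacle is the uniform ergodicity statement in the first step: one must certify that both the spectral gap $\lambda$ and the prefactor $C_1$ depend only on $\Ii=\{K_0,\sigma_L,\sigma_U\}$, not on $b$ itself. This is the crucial place where the density bounds $\pi_L,\pi_U$ (built only from $\Ii$) are used, and it drives the $\Delta^{-1}$ dependence of the mixing time, which in turn produces the favourable factor $\Delta$ in the exponent---precisely the factor that will deliver high-frequency concentration at the correct rate when applied downstream.
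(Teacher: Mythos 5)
Your proposal does not prove the stated theorem. The statement labelled \Cref{thm:Paulin} is Paulin's abstract concentration inequality for uniformly ergodic Markov chains (Paulin 2015, Proposition 3.4 and Theorem 3.4), which the paper cites without proof and then uses as a black box. What you have written is instead a proof of \Cref{thm:concentration-for-diffusions}: you periodise, argue a mixing-time bound $t_{\mathrm{mix}}\lesssim\Delta^{-1}$, and then --- in your own words --- perform ``a direct substitution into \Cref{thm:Paulin}.'' That is, you take the theorem you were asked to prove as a hypothesis and derive a downstream consequence of it. A genuine proof of Paulin's bound has to control $\sum_i f(M_i)$ without assuming any Markov concentration inequality; Paulin's route is a Marton coupling combined with a Bernstein/McDiarmid-type argument, in which the mixing time appears as the coupling width, and none of that machinery appears in your proposal.

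Even read charitably as a proof of \Cref{thm:concentration-for-diffusions}, your first step differs from the paper and has a gap. You appeal to a one-dimensional Poincar\'e inequality on the torus to extract a spectral gap $\lambda(\Ii)$ and conclude $\sup_x\|P^b_t(x,\cdot)-\mu_b\|_{TV}\le C_1e^{-\lambda t}$. But for general $b\in\Theta$ the periodised diffusion is \emph{not} reversible in $L^2(\mu_b)$ (this is only the case when $I_b(1)=0$; see the remark after \Cref{thm:concentration-for-diffusions}), so the generator is not self-adjoint, and a Poincar\'e inequality for its symmetric Dirichlet form does not by itself yield geometric TV decay, let alone uniformly in the starting point. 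The paper avoids this by citing Bhattacharya et al.'s Theorem 2.6, a quantitative TV-convergence bound for nonreversible one-dimensional diffusions whose constants are controlled by $\pi_L,\pi_U$, and then regularising to pass from an absolutely continuous initial law to a point mass. Your $\epsilon$-net argument for the supremum form is a legitimate substitute for the paper's Baraud-style chaining, but the theorem mismatch above is the central problem and the reversibility gap would also need to be closed.
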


\begin{proof}[Proof of \Cref{thm:concentration-for-diffusions}]

Since $f$ is assumed periodic we see that $f(X_{k\Delta})=f(\dot{X}_{k\Delta}),$ where we recall $\dot{X}=X\mod 1$. Denote by $\dot{p}_b(t,x,y)$ the transition densities of $\dot{X}$, i.e.\
$ \dot{p}_b(t,x,y) = \sum_{j\in \ZZ} p_b(t,x,y+j)$ (see the proof of Proposition 9 in Nickl \& S{\"o}hl \cite{Nickl2017} for an argument that the sum converges).  
Theorem 2.6 in Bhattacharya et al.\ \cite{Bhattacharya1999} tells us that if $\dot{X}_0$ has a density $\eta_0$ on $[0,1]$, then $\dot{X}_t$ has a density $\eta_t$ satisfying 
\[ \norm{\eta_t -\pi_b}_{TV} \leq \frac{1}{2}\norm{\eta_0/\pi_b - 1}_{TV} \exp\brackets[\Big]{-\frac{1}{2M_b} t},\] where  $M_b:=\sup_{z\in[0,1]}\braces[\Big]{(\sigma^2(z)\pi_b(z))^{-1} \int_0^z \pi_b(x)\dx \int_z^1 \pi_b(y)\dy}.$ 
We can regularise to extend the result so that it also applies when the initial distribution of $\dot{X}$ is a point mass: if $\dot{X}_0=x$ then $\dot{X}_1$ has density $\dot{p}_b(1,x,\cdot),$ hence the result applies to show \[\norm{\dot{p}_b(t,x,\cdot)-\pi_b}_{TV} \leq \frac{1}{2} \norm{\dot{p}_b(1,x,\cdot)/\pi_b - 1}_{TV} \exp\brackets[\Big]{-\frac{1}{2M_b}(t-1)}.\] Moreover, note $\norm{\dot{p}_b(1,x,\cdot)/\pi_b-1}_{TV}\leq \pi_L^{-1}\norm{\dot{p}_b(1,x,\cdot)-\pi_b}_{TV}\leq \pi_L^{-1}.$
Also note we can upper bound $M_b$ by a constant $M=M(\Ii)$: precisely, we can take $M=\sigma_L^{-2}\pi_L^{-1}\pi_U^2$. 

Thus, we see that for $t\geq 1$, we have 
\[\norm{\dot{p}_b(t,x,\cdot)-\pi_b}_{TV} \leq K \exp\brackets[\Big]{-\frac{1}{2M}t}\] for some constant $K=K(\Ii)$, uniformly across $x\in[0,1]$.
It follows that, for each fixed $\Delta$, the discrete time Markov chain $(\dot{X}_{k\Delta})_{k\geq 0}$ is uniformly ergodic with mixing time $t_{\text{mix}}\leq 1+2M\log(4K)\Delta^{-1}\leq K'\Delta^{-1}$ for some constant $K'$. 
\Cref{thm:Paulin} applies to tell us 

\[ P \brackets*{ \abs{\sum_{i=1}^n f(X_{k\Delta})-E _\mu[f]}\geq t}\leq 2\exp\brackets*{ -\frac{t^2}{2K'\Delta^{-1}(8(n+2K'\Delta^{-1})V_f+20tC)}}.\]
Since $n\Delta\to \infty$ by assumption, we see $8(n+2K'\Delta^{-1})\leq K''n$ for some constant $K''$. Using the bound $2/(a+b)\geq \min\brackets{1/a,1/b}$ for $a,b>0$ and upper bounding the centred moments $V_f$ and $C$ by the uncentred moments $\norm{f}_\mu^2$ and $\norm{f}_\infty$, we deduce \eqref{eqn:DiffusionConcentrationMinForm}.

The result \Cref{eqn:DiffusionConcentrationMaxForm} is obtained by a change of variables. For the supremum result \Cref{eqn:DiffusionConcentrationSupForm}, we use a standard chaining argument, eg.\ as in Baraud \cite{Baraud2010} Theorem 2.1, where we use \Cref{eqn:DiffusionConcentrationMaxForm} in place of Baraud's Assumption 2.1, noting that Baraud only uses Assumption 2.1 to prove an expression mirroring \Cref{eqn:DiffusionConcentrationMaxForm}, and the rest of the proof follows through exactly. Precisely, following the proof, we can take $\tilde{\kappa}=36\kappa$. 
\end{proof}
\begin{remark}
The proof simplifies if we restrict $\Theta$ to only those $b$ satisfying $I_b(1)=0$. In this case, the invariant density (upon changing normalising constant to some $G_b$) reduces to the more familiar form $\pi_b(x)=(G_b\sigma^2(x))^{-1}e^{I_b(x)}$. The diffusion is reversible in this case, and we can use Theorem 3.3 from \cite{Paulin2015} instead of Theorem 3.4 to attain the same results but with better constants. 
\end{remark}

\subsubsection{H{\"o}lder continuity properties of diffusions}\label{sec:HolderProperties}
Define \[ w_m(\delta) = \delta^{1/2} \brackets{\brackets{\log \delta^{-1}}^{1/2}+\log(m)^{1/2}}, \qquad \delta\in (0,1] \] for $m\geq 1$, and write $w_m(\delta):=w_1(\delta)$ for $m<1$. The key result of this section is the following.
\begin{lemma} \label{lem:X_holder}
Let $X$ solve the scalar diffusion equation \Cref{eqn:sde}, and grant \Cref{assumptions:b,assumptions:sigma}. Then there exist positive constants $\lambda$, $C$ and $\tau$, all depending on $\Ii$ only, such that for any $u>C\max\brackets{\log(m),1}^{1/2}$ and for any initial value $x$, \[ P_{b}^{(x)}\brackets*{ \sup_{\substack{s,t\in [0,m],\\t\not=s, \abs{t-s}\leq \tau}} \brackets*{\frac{\abs{X_t-X_s}}{w_m(\abs{t-s})}} > u}\leq 2e^{-\lambda u^2}.\]  
\end{lemma}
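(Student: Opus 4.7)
The plan is to decompose $X_t-X_s=\int_s^t b(X_u)\,du+\int_s^t\sigma(X_u)\,dW_u$ and show that the drift contribution is negligible while the martingale contribution is essentially a time-changed Brownian motion whose modulus is controlled by chaining. Since $\norm{b}_\infty\leq K_0$, the drift term is bounded by $K_0|t-s|$, and for $|t-s|\leq\tau$ small enough this is dominated by $\tfrac12 w_m(|t-s|)$ (note $w_m(\delta)/\delta\to\infty$ as $\delta\downarrow0$); so it suffices to control the martingale part $M_t:=\int_0^t\sigma(X_u)\,dW_u$, whose quadratic variation satisfies $\qv{M}_t-\qv{M}_s\leq \sigma_U^2(t-s)$.

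Fix $\tau\in(0,1]$ to be chosen and let $\delta_k=\tau 2^{-k}$, $k\geq 0$. For each $k$, partition $[0,m]$ into at most $N_k:=\lceil m/\delta_k\rceil$ consecutive intervals of length $\delta_k$. Apply the exponential (Bernstein) inequality for continuous martingales with bounded quadratic variation, obtained from the exponential supermartingale $\exp(\theta(M_t-M_s)-\tfrac{\theta^2}{2}\qv{M}[s,t])$ and optional stopping: for each such interval $[a_k^{(i)},a_k^{(i)}+\delta_k]$ and every $x>0$,
\[ P_b^{(x_0)}\Bigl(\sup_{a_k^{(i)}\leq s\leq t\leq a_k^{(i)}+\delta_k} |M_t-M_s|>x\Bigr) \leq 4\exp\Bigl(-\tfrac{x^2}{2\sigma_U^2\delta_k}\Bigr),\]
uniformly in the starting point $x_0$ (the right-hand side depends only on $\sigma_U$). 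Choosing $x=c u w_m(\delta_k)$ with $c$ a small constant to be fixed, and recalling $w_m(\delta_k)^2\geq \delta_k(\log\delta_k^{-1}+\log\max(m,1))$, the individual bound becomes $\leq 4\exp(-c'u^2(k\log 2+\log\tau^{-1}+\log\max(m,1)))$ for some $c'=c'(\Ii)$. Summing over the $N_k\leq 2m/\delta_k=2\tau^{-1}m 2^k$ intervals and over $k\geq 0$ via a geometric series gives a total bound of the form
\[ A\,\tau^{-1}\max(m,1)\exp\bigl(-c'u^2\log\max(m,1)\bigr)\sum_{k\geq 0} 2^k\exp(-c'u^2 k\log 2), \]
and provided $u\geq C\max(\log m,1)^{1/2}$ with $C=C(\Ii)$ large enough, both the prefactor $\tau^{-1}\max(m,1)^{1-c'u^2}$ and the geometric series are controlled, yielding a bound of the form $2e^{-\lambda u^2}$ for some $\lambda=\lambda(\Ii)>0$.

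It remains to pass from the event that all dyadic increments at all scales are small to the continuous supremum. Given $s<t\in[0,m]$ with $|t-s|\leq\tau$, choose $k$ with $\delta_{k+1}<|t-s|\leq\delta_k$ and use the standard chaining / telescoping trick: $s$ and $t$ each lie in dyadic sub-intervals at scales $\delta_j$ for $j\geq k$, and telescoping along the nested dyadic endpoints writes $M_t-M_s$ as a sum of at most two increments at each scale $\delta_j$, $j\geq k$. On the good event above, each increment at scale $\delta_j$ is bounded by $cu w_m(\delta_j)$, and because $w_m(\delta_j)= 2^{-(j-k)/2}w_m(\delta_k)(1+o(1))$ the telescoping series is geometrically summable, giving $|M_t-M_s|\leq C'u w_m(|t-s|)$. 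Combining with the negligible drift bound and adjusting $u$ by a constant absorbs all multiplicative factors into the final constants $C,\lambda,\tau$.

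The main obstacle I anticipate is bookkeeping the logarithmic factors: the chaining must yield exactly the form $w_m(\delta)=\delta^{1/2}((\log\delta^{-1})^{1/2}+(\log m)^{1/2})$ rather than $\delta^{1/2}(\log(m/\delta))^{1/2}$, so one must either keep the two logarithms separate through the union bound (using $(a+b)^2\geq a^2+b^2$ to split $w_m^2$ into a $\log\delta^{-1}$ piece that absorbs the $2^k$-many intervals at scale $\delta_k$ and a $\log m$ piece that absorbs the overall factor $m$) or verify equivalence of the two forms up to constants. The requirement $u>C\max(\log m,1)^{1/2}$ appears precisely to ensure $m^{1-c'u^2}\leq m^{-1}$, so that summation over the $O(m)$ initial intervals does not destroy the sub-Gaussian tail. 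Uniformity in the starting point $x_0$ is automatic since every inequality used depends on $X$ only through $\norm{b}_\infty$ and $\sigma_U$.
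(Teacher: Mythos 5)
Your proposal is correct in strategy but takes a genuinely different route from the paper, and there is one bookkeeping slip worth flagging.

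\textbf{Comparison with the paper's proof.} The paper first applies the scale function $S(x)=\int_0^x\exp(-\int_0^y 2b/\sigma^2)\,dy$ so that $Y_t=S(X_t)$ is a driftless local martingale (\Cref{lem:Y_holder}); it then appeals to the (Dambis--)Dubins--Schwarz theorem to time-change $Y$ to a Brownian motion and invokes a Dudley/metric-entropy Gaussian-process bound (\Cref{lem:B_holder}) to control the Brownian modulus, finally undoing the scale function using that $S^{-1}$ is Lipschitz. You instead decompose $X_t-X_s$ directly into a drift term (bounded by $K_0|t-s|$, which is indeed dominated by $\tfrac{u}{2}w_m(|t-s|)$ once $\tau$ is small and $u\geq C$) and a martingale term $M_t=\int_0^t\sigma(X_u)\,dW_u$ with bounded quadratic variation, and you run an explicit dyadic chaining using the exponential supermartingale inequality. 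Both are valid; your route avoids the scale function, Dubins--Schwarz, and Dudley's theorem entirely and is more elementary. A pleasant by-product of your approach is that, once the two logarithms in $w_m^2\geq\delta(\log\delta^{-1}+\log m)$ are split in the union bound as you indicate, the factor $m^{-c'u^2}$ absorbs the $O(m)$-many base intervals whenever $c'u^2\geq 1$, so in fact you only need the threshold $u>C(\Ii)$ rather than $u>C\max(\log m,1)^{1/2}$; the paper's entropy argument, by contrast, genuinely needs $u$ to exceed $E[\text{chaining sup}]$, which it shows grows like $\sqrt{\log m}$, which is where the $\sqrt{\log m}$ threshold in the lemma statement comes from.

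\textbf{The one gap.} In the displayed total bound you dropped the factor $\exp(-c'u^2\log\tau^{-1})=\tau^{c'u^2}$ coming from the $k$-independent part of $\log\delta_k^{-1}=k\log 2+\log\tau^{-1}$, and you then claim the desired sub-Gaussian decay comes from the prefactor $\max(m,1)^{1-c'u^2}$ together with $u\geq C\sqrt{\log m}$. That reasoning fails when $m$ is close to $1$ (then $\max(m,1)^{1-c'u^2}\approx 1$ and the bound degenerates to the constant $\tau^{-1}$ times a geometric sum, with no decay in $u$). The fix is simply to keep the $\tau^{c'u^2}$ factor: the prefactor becomes $\tau^{-1}m\cdot\tau^{c'u^2}m^{-c'u^2}=(\tau/m)^{c'u^2-1}\leq\tau^{c'u^2-1}$ for $m\geq 1$, which for $c'u^2\geq 2$ is $\leq e^{-(c'/2)(\log\tau^{-1})u^2}$, and this supplies the Gaussian tail for all $m\geq 1$; the extra constant in front is then absorbed by a slight reduction of $\lambda$ once $u\geq C(\Ii)$. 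The other small inaccuracies (the constant in $\exp(-x^2/(2\sigma_U^2\delta_k))$ should be $\exp(-x^2/(8\sigma_U^2\delta_k))$ after passing from the one-sided to the two-index supremum, and the prefactor should be $2$ rather than $4$ per interval) only affect the value of $c'$ and do not change the argument.
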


\begin{remarks}
\begin{enumerate}[i.]
\item
We will need to control all increments $X_{(j+1)\Delta}-X_{j\Delta}$ simultaneously, hence we include the parameter $m$, which we will take to be the time horizon $n\Delta$ when applying this result. Simply controlling over $[0,1]$ and using a union bound does not give sharp enough results.
\item The \namecref{lem:X_holder} applies for any distribution of $X_0$, not only point masses, by an application of the tower law.
\end{enumerate}
\end{remarks}
The modulus of continuity $w_m$ matches that of Brownian motion, and indeed the proof, given in \Cref{sec:HolderPropertiesProofs}, is to reduce to the corresponding result for Brownian motion. First, by applying the scale function one transforms $X$ into a local martingale, reducing \Cref{lem:X_holder} to the following result, also useful in its own right.

\begin{lemma}\label{lem:Y_holder}
Let $Y$ be a local martingale with quadratic variation satisfying $\abs{\qv{Y}_t-\qv{Y}_s} \leq A \abs{t-s}$ for a constant $A\geq 1$. 
Then there exist positive constants $\lambda=\lambda(A)$ and $C=C(A)$ such that for any $u>C\max\brackets{\log(m),1}^{1/2}$, 
\[ \Pr\brackets*{ \sup_{\substack{s,t\in [0,m], s\not=t,\\ \abs{t-s}\leq A^{-1}e^{-2}}} \brackets*{\frac{\abs{Y_t-Y_s}}{w_m(\abs{t-s})}} > u}\leq 2e^{-\lambda u^2}.\] 
 
In particular the result applies when $Y$ is a solution to $\dY_t=\tilde{\sigma}(Y_t)\dW_t,$ provided $\norm{\tilde{\sigma}^2}_\infty \leq A.$
 \end{lemma}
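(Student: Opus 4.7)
The plan is to reduce the problem to a modulus-of-continuity tail bound for Brownian motion via the Dambis--Dubins--Schwarz (DDS) representation, and then prove this Brownian bound by a dyadic chaining argument that carefully tracks the dependence on the time horizon.

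First, write $Y_t=B_{\qv{Y}_t}$ for a standard Brownian motion $B$ (on a possibly enlarged probability space, filling in an independent Brownian piece on any interval where $\qv{Y}$ is constant). Since $\qv{Y}$ is nondecreasing and $A$--Lipschitz, for $s<t$ in $[0,m]$ the image points $v:=\qv{Y}_s$ and $v':=\qv{Y}_t$ satisfy $v,v'\in[0,Am]$ and $v'-v\leq A|t-s|$. When $|t-s|\leq A^{-1}e^{-2}$ we also have $v'-v\leq e^{-2}$; monotonicity of $\delta\mapsto w_m(\delta)$ on $(0,e^{-1})$, combined with $(v'-v)^{1/2}/|t-s|^{1/2}\leq\sqrt{A}$ and $\log|t-s|^{-1}\geq \log(v'-v)^{-1}-\log A$, yields, after absorbing $\log A$ into constants,
\[
\frac{|Y_t-Y_s|}{w_m(|t-s|)} \leq \sqrt{A}\cdot\frac{|B_{v'}-B_v|}{(v'-v)^{1/2}\bigl[(\log(v'-v)^{-1})^{1/2}+(\log m)^{1/2}\bigr]}.
\]
Setting $M:=Am$, it thus suffices to show that there are universal $c_0,C_0>0$ such that, for every $u>C_0\sqrt{\log M}$,
\[
\Pr\biggl(\sup_{\substack{0\leq v<v'\leq M\\ v'-v\leq e^{-2}}}\frac{|B_{v'}-B_v|}{(v'-v)^{1/2}\bigl[(\log(v'-v)^{-1})^{1/2}+(\log M)^{1/2}\bigr]}>u\biggr)\leq 2e^{-c_0 u^2}.
\]

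Second, I prove this Brownian bound by dyadic chaining. For each integer $j\geq j_0:=\lceil\log_2 e^2\rceil$, introduce the grid $\Gg_j=\{k2^{-j}:0\leq k\leq\lfloor M2^j\rfloor\}$. The Gaussian tail $\Pr(|B_{v+h}-B_v|>x\sqrt h)\leq 2e^{-x^2/2}$ together with a union bound over the at most $(M+1)2^j$ consecutive pairs at level $j$ gives, for any $\alpha_j>0$,
\[
\Pr\Bigl(\max_{v\in\Gg_j,\,v+2^{-j}\leq M}|B_{v+2^{-j}}-B_v|>\alpha_j 2^{-j/2}\Bigr)\leq 2(M+1)2^j e^{-\alpha_j^2/2}.
\]
Choosing $\alpha_j^2=4u^2+8j+8\log(M+1)$ makes the right-hand side summable in $j$ to at most $2e^{-u^2}$. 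On the corresponding good event, any pair $v<v'$ with $v'-v\leq e^{-2}$ admits a dyadic expansion starting at level $\tilde j:=\lfloor\log_2((v'-v)^{-1})\rfloor$, and summing the increments along the chain telescopically gives
\[
|B_{v'}-B_v|\lesssim\sum_{j\geq\tilde j}\alpha_j 2^{-j/2}\lesssim (v'-v)^{1/2}\bigl[u+(\log(v'-v)^{-1})^{1/2}+(\log M)^{1/2}\bigr]
\]
by geometric summation. The hypothesis $u\geq C_0\sqrt{\log M}$ absorbs the $u$ term into the bracket, producing the claimed bound after a final relabelling of $c_0$.

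The main obstacle is arranging the chaining so that both the $(\log(v'-v)^{-1})^{1/2}$ and the $(\log M)^{1/2}$ terms appear \emph{inside} the modulus rather than as an overall tail factor; this is precisely what forces the $j+\log(M+1)$ contributions in $\alpha_j^2$, and level-by-level summability has to be checked. The second assertion of the lemma, for $Y$ solving $\dY_t=\tilde\sigma(Y_t)\dW_t$ with $\norm{\tilde\sigma^2}_\infty\leq A$, then follows immediately by applying the first part to the quadratic variation $\qv{Y}_t=\int_0^t\tilde\sigma^2(Y_s)\ds$, which is $A$--Lipschitz.
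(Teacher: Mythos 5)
Your proposal is correct, and it splits naturally into two halves. The first half — reducing to a Brownian modulus-of-continuity tail bound on $[0,Am]$ via the Dambis--Dubins--Schwarz representation, using $\qv{Y}_t-\qv{Y}_s\leq A\abs{t-s}$, monotonicity of the weight, and absorbing all $A$-dependence into the constants — is exactly the paper's reduction. The second half is genuinely different: the paper obtains the Brownian ingredient from a separate lemma (\Cref{lem:B_holder}) proved with Gaussian-process machinery, namely Dudley's entropy bound for the expected supremum of the normalised increment process, Borell-type concentration around that expectation, and a blocking/subgaussian-maxima argument to show the expectation grows only like $\sqrt{\log m}$ (which is what forces the threshold $u>C\max(\log m,1)^{1/2}$ there). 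You instead prove the Brownian bound by direct dyadic chaining with explicit Gaussian tails, building the terms $8j$ and $8\log(M+1)$ into the level-$j$ thresholds so that the union bound is summable and the $(\log\delta^{-1})^{1/2}$ and $(\log M)^{1/2}$ contributions emerge inside the weight; the summability and geometric-summation steps check out. Your route is more elementary and self-contained, and in fact only needs $u$ larger than a universal constant (the $\log M$ dependence being carried by the weight), so the stated threshold is used only cosmetically; the paper's route is shorter on the page because it leans on off-the-shelf results from Gin{\'e}--Nickl. Two small blemishes, neither a gap: (i) the intermediate inequality $\log\abs{t-s}^{-1}\geq\log(v'-v)^{-1}-\log A$ you cite in the reduction can fail, since nothing bounds $v'-v$ from below when $\qv{Y}$ increases slowly; it is not needed, because monotonicity of $\delta\mapsto\delta^{1/2}(\log\delta^{-1})^{1/2}$ on $(0,e^{-1})$ together with $v'-v\leq A\abs{t-s}\leq e^{-2}$ and $A\geq 1$ already gives $w_{Am}(v'-v)\leq C(A)\,w_m(\abs{t-s})$, which is all your displayed comparison requires (and is how the paper argues); (ii) with $j_0=\lceil\log_2 e^2\rceil=3$ a pair with $v'-v\in(2^{-3},e^{-2}]$ has $\tilde{\jmath}=2<j_0$, so the chain should start at $\max(\tilde{\jmath},j_0)$ with a bounded number of extra level-$j_0$ steps — a trivial adjustment.
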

\Cref{lem:Y_holder} follows from the corresponding result for Brownian motion by a time change (i.e.\ the (Dambis--)Dubins-Schwarz Theorem). 
It is well known that Brownian motion has modulus of continuity $\delta^{1/2}\brackets{\log\delta^{-1}}^{1/2}$
in the sense that there almost surely exists a constant $C>0$ such that
$\abs{B_t-B_s}\leq C\abs{t-s}^{1/2}\brackets{\log\brackets{\abs{t-s}^{-1}}}^{1/2},$ for all  $t,s\in[0,1]$ sufficiently close, but \Cref{lem:X_holder,lem:Y_holder} depend on the following quantitative version of this statement, proved using Gaussian process techniques. The proofs of \Cref{lem:Y_holder,lem:B_holder} are given in \Cref{sec:HolderPropertiesProofs}.

\begin{lemma}\label{lem:B_holder}
	Let $B$ be a standard Brownian motion on $[0,m]$. 
	There are postive (universal) constants $\lambda$ and $C$ such that for $u>C\max(\log(m),1)^{1/2}$, \[ \Pr\brackets*{ \sup_{\substack{s,t\in [0,m],\\ s\not=t, \abs{t-s}\leq e^{-2}}} \brackets*{\frac{\abs{B_t-B_s}}{w_m(\abs{t-s})}} > u}\leq 2e^{-\lambda u^2}.\] 
\end{lemma}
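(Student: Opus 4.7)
The natural first move is to reduce the problem to the case $m=1$ by covering. For general $m\geq 1$, cover $[0,m]$ by the $2\lceil m\rceil+1$ overlapping unit-length intervals $I_j=[j/2,j/2+1]\cap[0,m]$, $j=0,1,\dots,2\lceil m\rceil$. Any pair $s,t\in[0,m]$ with $|t-s|\leq e^{-2}<1/2$ lies in a common $I_j$, so the sup over such pairs in the statement of \Cref{lem:B_holder} is bounded by the maximum, over $j$, of the corresponding sup on $I_j$. By the Markov property and stationary-increments of Brownian motion, each $B|_{I_j}$ (after a time shift and subtracting the initial value) is a standard Brownian motion on $[0,1]$. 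Moreover, since $m\geq 1$, we have $w_m(\delta)\geq w_1(\delta)=\delta^{1/2}(\log\delta^{-1})^{1/2}$, so it suffices to control the sup on each $I_j$ using the smaller denominator $w_1$. If I can establish the $m=1$ version of the lemma, i.e.\ that for some universal $\lambda_1,C_1>0$ and all $u>C_1$,
\[\Pr\!\left(\sup_{\substack{s,t\in[0,1],\\s\neq t,\,|t-s|\leq e^{-2}}}\frac{|B_t-B_s|}{w_1(|t-s|)}>u\right)\leq 2e^{-\lambda_1 u^2},\]
then a union bound over the $O(m)$ intervals yields a bound $(2m+2)\cdot 2e^{-\lambda_1 u^2}$. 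Choosing $u>C\max(\log m,1)^{1/2}$ with $C$ large enough, the factor $2m+2$ is absorbed into the exponential and the full lemma follows with $\lambda=\lambda_1/2$ (say).

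It remains to prove the $m=1$ quantitative Lévy modulus. The classical approach, via dyadic chaining, is what I would carry out. Define the scale-$k$ maxima
\[M_k=\max_{0\leq j<2^k}\bigl|B_{(j+1)2^{-k}}-B_{j\,2^{-k}}\bigr|,\qquad k\geq 0,\]
which are maxima of $2^k$ i.i.d.\ $|N(0,2^{-k})|$ variables. Gaussian tail bounds combined with a union bound give $\Pr(M_k>a)\leq 2\cdot 2^k\,e^{-a^2 2^{k-1}}$, and in particular $\mathbb{E}[M_k]\leq c\,2^{-k/2}\sqrt{k}$. The standard chaining decomposition then shows that for $|t-s|\in(2^{-K-1},2^{-K}]$ with $K\geq 3$,
\[|B_t-B_s|\leq C_0\sum_{k\geq K}M_k\]
for a universal $C_0$; dividing by $w_1(|t-s|)\geq 2^{-(K+1)/2}(K\log 2)^{1/2}$ and applying the Gaussian tail bound to each $M_k$ at level $a_k=2^{-k/2}(\sqrt{k\log 2}+u\sqrt{k}/C_0')$ (for a suitable $C_0'$) yields, for $u$ exceeding a universal constant,
\[\Pr\!\left(\sup_{|t-s|\in(2^{-K-1},2^{-K}]}\frac{|B_t-B_s|}{w_1(|t-s|)}>u\right)\leq c_1\,e^{-c_2 u^2 K}.\]
Summing over $K\geq 3$ produces a geometric series of exponentials which is bounded by $2e^{-\lambda_1 u^2}$ for appropriate universal $\lambda_1,C_1$, completing the $m=1$ case.

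The main technical obstacle is the chaining step: I must bound the supremum over continuous (not merely dyadic) pairs $(s,t)$ and then carefully sum the Gaussian tail estimates so that (i) the $2^k$ union-bound factor at scale $k$ is overwhelmed by the exponent $a_k^2\,2^{k-1}$, and (ii) the contributions at different scales $K$ remain summable after adjusting for the denominator $w_1(|t-s|)$ at that scale. Both are standard but require careful bookkeeping of the interaction between the $\sqrt{k\log 2}$ mean term of $M_k$ and the $u\sqrt{k}$ deviation term, and the choice of the threshold $K\geq 3$ (so that $2^{-K}\leq e^{-2}$) is what keeps the logarithmic denominator $w_1$ bounded away from zero throughout the chaining. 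An equivalent -- and arguably more conceptual -- route would be to treat $(s,t)\mapsto(B_t-B_s)/w_m(|t-s|)$ as a centred Gaussian process, check that its canonical supremum has variance at most $1/2$ on $|t-s|\leq e^{-2}$, bound its expected supremum by a constant uniformly in $m$ via Dudley's entropy integral, and conclude by Borell--TIS; this also works but shifts rather than removes the chaining difficulty.
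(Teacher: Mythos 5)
Your proposal is correct in substance but follows a genuinely different route from the paper. The paper treats the normalised increments $(B_t-B_s)\big/\int_0^{d(s,t)}\sqrt{\log N(T,d,x)}\,\dx$ as a Gaussian process, invokes Dudley's sample-continuity theorem together with the Borell--Sudakov--Tsirelson inequality to get concentration of the supremum around its expectation $M$, and then bounds the growth of $M$ in $m$ by $O(\sqrt{\log m})$ via a maximum of $O(m)$ subgaussian window-suprema over the intervals $[ke^{-2},(k+2)e^{-2}]$; the hypothesis $u>C\max(\log m,1)^{1/2}$ is there precisely to dominate $M$. You instead prove a quantitative L\'evy modulus on $[0,1]$ by explicit dyadic chaining (scale maxima $M_k$, Gaussian tails, summation over scales) and handle the dependence on $m$ by covering $[0,m]$ with $O(m)$ overlapping unit intervals, using $w_m\geq w_1$ and absorbing the union-bound factor $O(m)$ into the exponential thanks to $u\gtrsim\sqrt{\log m}$. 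Your route is more elementary and self-contained; the paper's gets the concentration step for free from Gaussian-process machinery, and its windowing of $[0,m]$ plays the same role as your covering, but at the level of the expected supremum rather than of the probability bound. The ``more conceptual'' alternative you sketch at the end is essentially the paper's argument.

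One bookkeeping slip to fix: the dyadic bands $(2^{-K-1},2^{-K}]$ with $K\geq 3$ only cover $|t-s|\leq 2^{-3}$, whereas the lemma requires $|t-s|\leq e^{-2}>2^{-3}$, so the sliver $(2^{-3},e^{-2}]$ is missed. This is harmless: include the band for $K=2$ intersected with $\{|t-s|\leq e^{-2}\}$, on which $w_1(|t-s|)\geq w_1(2^{-3})$ since $w_1$ is increasing on $(0,e^{-1}]$, and the same tail estimate goes through with adjusted universal constants.
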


\subsection{Concentration of a drift estimator}\label{sec:ComteConcentration}

\subsubsection{Defining the estimator}
We adapt an estimator introduced in Comte et al.\ \cite{Comte2007}. 
The estimator is constructed by considering drift estimation as a regression-type problem. Specifically, defining \[
Z_{k\Delta}=\frac{1}{\Delta}\int_{k\Delta}^{(k+1)\Delta} \sigma(X_s)\dW_s, \qquad
R_{k\Delta}=\frac{1}{\Delta}\int_{k\Delta}^{(k+1)\Delta} (b(X_s)-b(X_{k\Delta}))\ds,
\]
we can write
\[ \frac{X_{(k+1)\Delta}-X_{k\Delta}}{\Delta}=b(X_{k\Delta})+Z_{k\Delta}+R_{k\Delta}.\] Note $R_{k\Delta}$ is a discretization error which vanishes as $\Delta\to 0$ and $Z_{k\Delta}$ takes on the role of noise. 
We define the \emph{empirical norm} and the related \emph{empirical loss function} \[\norm{u}_n=\frac{1}{n}\sum_{k=1}^n u(X_{k\Delta})^2, \quad 
\gamma_n(u)=\frac{1}{n} \sum_{k=1}^n [\Delta^{-1}(X_{(k+1)\Delta}-X_{k\Delta})-u(X_{k\Delta})]^2, \quad u:[0,1]\to \RR.\]
In both we leave out the $k=0$ term for notational convenience.

Recalling that $S_m$ is a Meyer-type wavelet space as described in \Cref{sec:ApproxSpaces} and $K_0$ is an upper bound for the $C_\per^1$--norm of any $b\in \Theta$, for $l_n$ to be chosen we define $\tilde{b}_n$ as a solution to the minimisation problem
\[\tilde{b}_n\in \argmin_{u\in \tilde{S}_{l_n}} \gamma_n(u), \qquad \tilde{S}_m :=\braces{u\in S_m : \norm{u}_\infty \leq K_0+1},\] where we choose arbitrarily among minimisers if there is no unique minimiser.\footnote{It is typical that we do not have uniqueness, since if $u$ is a minimiser of $\gamma_n$, then so is any $\tilde{u}\in\tilde{S}_{l_n}$ such that $\tilde{u}(X_{k\Delta})=u(X_{k\Delta})$ for $1\leq k\leq n$.}

\subsubsection{Main concentration result}
For the estimator defined above we will prove the following concentration inequality. \begin{theorem}\label{thm:bhat_concentrates} 
Consider data $X^{(n)}=(X_{k\Delta})_{0\leq k \leq n}$ sampled from a solution $X$ to \Cref{eqn:sde} under \Cref{assumptions:b,assumptions:sigma,assumption:Delta,assumption:invariant}. Let $\eps_n\to 0$ be a sequence of positive numbers and let $l_n\to \infty$ be a sequence of positive integers such that $n\Delta\eps_n^2/\log(n\Delta)\to \infty$ and, for some constant $L$ and all $n$, $D_{l_n}\leq L n\Delta \eps_n^2$. For these $l_n$, let $\tilde{b}_n$ be defined as above and let $\Theta_n\subseteq \braces{b\in \Theta \st \norm{\pi_{l_n}b-b}_2\leq \eps_n}$ contain $b_0$, where $\pi_{l_n}$ is the $L^2-$orthogonal projection onto $S_{l_n}$.

Then for any $D>0$ there is a $C=C(\Ii,L_0,D,L)>0$ such that, uniformly across $b\in \Theta_n$, 
\[ P_{b} \brackets*{\norm{\tilde{b}_n -b}_2>C\eps_n}\leq e^{-Dn\Delta \eps_n^2},\] for all $n$ sufficiently large.
\end{theorem}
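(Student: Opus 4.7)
\emph{Strategy.} The plan is a sieve oracle analysis in the spirit of Comte--Genon-Catalot--Rozenholc \cite{Comte2007}. Expanding $\gamma_n$ one verifies the identity, valid for any $u:[0,1]\to\RR$,
\[\gamma_n(u)-\gamma_n(b)=\norm{u-b}_n^2-2\nu_n(u-b)-2R_n(u-b),\]
where $\nu_n(f):=n^{-1}\sum_{k=1}^n f(X_{k\Delta})Z_{k\Delta}$ is the stochastic noise and $R_n(f):=n^{-1}\sum_{k=1}^n f(X_{k\Delta})R_{k\Delta}$ captures the discretisation error. Take the reference point $u_0=\pi_{l_n}b\in S_{l_n}$; the uniform sup-norm bound \Cref{eqn:WaveletSeriesConverges} yields $\norm{u_0}_\infty\leq K_0+1$ for large $n$, so $u_0\in\tilde S_{l_n}$ and the minimising property $\gamma_n(\tilde b_n)\leq\gamma_n(u_0)$ rearranges to the basic inequality
\[\norm{\tilde b_n-b}_n^2\leq\norm{u_0-b}_n^2+2\abs{\nu_n(\tilde b_n-u_0)}+2\abs{R_n(\tilde b_n-u_0)}.\]
The task reduces to showing, uniformly in $b\in\Theta_n$ with $P_b$-probability at least $1-e^{-Dn\Delta\eps_n^2}$, that each right-hand term is of order $\eps_n^2$ or of order $\eps_n\norm{\tilde b_n-u_0}_n$.

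\emph{Bounding the three terms.} For the bias $\norm{u_0-b}_n^2$, I would apply \Cref{thm:concentration-for-diffusions} to the bounded periodic function $(u_0-b)^2$ in order to transfer it to $E_\mu[(u_0-b)^2]\leq\pi_U\eps_n^2$ at the required exponential rate. For the discretisation term, the Lipschitz bound $\abs{R_{k\Delta}}\leq K_0\sup_{s\in[k\Delta,(k+1)\Delta]}\abs{X_s-X_{k\Delta}}$ combined with \Cref{lem:X_holder} at time horizon $m=n\Delta$ controls all $n$ such path oscillations simultaneously by $Cw_{n\Delta}(\Delta)\lesssim\sqrt{\Delta\log(n\Delta)}$; Cauchy--Schwarz then gives $\abs{R_n(f)}\lesssim w_{n\Delta}(\Delta)\norm{f}_n$, and \Cref{assumption:Delta} forces $\Delta\log(n\Delta)\lesssim(n\Delta)^{-1}\leq\eps_n^2$, as required. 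For the noise term I pick an $L^2_\mu$-orthonormal basis $\{\phi_i\}_{i=1}^{D_{l_n}}$ of $S_{l_n}$; since $\tilde b_n-u_0\in S_{l_n}$, Cauchy--Schwarz yields $\abs{\nu_n(\tilde b_n-u_0)}\leq\norm{\tilde b_n-u_0}_\mu\brackets{\sum_i\nu_n(\phi_i)^2}^{1/2}$. Each $n\nu_n(\phi_i)=\sum_k\phi_i(X_{k\Delta})Z_{k\Delta}$ is a zero-mean martingale with respect to $(\Ff_{(k+1)\Delta})$ whose predictable quadratic variation is bounded by $\sigma_U^2\Delta^{-1}\sum_k\phi_i(X_{k\Delta})^2$, itself controlled by $n\sigma_U^2\Delta^{-1}\pi_U$ via \Cref{thm:concentration-for-diffusions} applied to $\phi_i^2$. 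A martingale Bernstein inequality, applied uniformly across $i$, then yields $\sum_i\nu_n(\phi_i)^2\lesssim D_{l_n}/(n\Delta)\leq L\eps_n^2$, and the equivalence $\pi_L\norm{\cdot}_2^2\leq\norm{\cdot}_\mu^2\leq\pi_U\norm{\cdot}_2^2$ converts this to $\abs{\nu_n(\tilde b_n-u_0)}\lesssim\eps_n\norm{\tilde b_n-u_0}_2$.

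\emph{Assembly and main obstacle.} Substituting into the basic inequality, using $\norm{\tilde b_n-u_0}_n\leq\norm{\tilde b_n-b}_n+\norm{u_0-b}_n$ and the analogous triangle inequality for $\norm{\cdot}_2$, together with the empirical-to-$L^2$ norm equivalence on $S_{l_n}$ (obtained by applying \Cref{thm:concentration-for-diffusions} to the quadratic-form functions $\phi_i\phi_j$ in order to compare $\norm{h}_n^2$ and $\norm{h}_2^2$ uniformly for $h\in S_{l_n}$), a Young/completion-of-the-square step produces $\norm{\tilde b_n-b}_n\lesssim\eps_n$ and hence $\norm{\tilde b_n-b}_2\lesssim\eps_n$. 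The main technical obstacle is the uniform noise control: the martingale Bernstein inequality for $\sum_k\phi_i(X_{k\Delta})Z_{k\Delta}$ requires controlling the \emph{random} predictable quadratic variation $\sigma_U^2\Delta^{-1}\sum_k\phi_i(X_{k\Delta})^2$, which itself must be concentrated via \Cref{thm:concentration-for-diffusions}, and all of this must be done uniformly across the $D_{l_n}$-dimensional basis while tracking the sharp $\Delta\to 0$ scaling and preserving the exponential tail rate $e^{-Dn\Delta\eps_n^2}$. This careful coupling of the Markov chain, Hölder-type and martingale concentration results, flagged at the start of \Cref{sec:Concentration}, is the essential technical novelty.
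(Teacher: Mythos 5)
Your skeleton agrees with the paper's up to a point: the basic inequality obtained from $\gamma_n(\tilde b_n)\le\gamma_n(\pi_{l_n}b)$ (legitimate since $\norm{\pi_{l_n}b}_\infty\le K_0+1$ by \Cref{eqn:WaveletSeriesConverges}), the bias term handled by \Cref{thm:concentration-for-diffusions} applied to $(\pi_{l_n}b-b)^2$, and the discretisation term handled by \Cref{lem:X_holder} plus \Cref{assumption:Delta} are exactly the paper's steps (with one fixable slip: to get the tail $e^{-Dn\Delta\eps_n^2}$ from \Cref{lem:X_holder} you must take $u\asymp(n\Delta\eps_n^2)^{1/2}$, so the oscillation bound is $u\,w_{n\Delta}(\Delta)\lesssim\sqrt{n\Delta\eps_n^2}\sqrt{\Delta\log(1/\Delta)}\le L_0^{1/2}\eps_n$, not $w_{n\Delta}(\Delta)$ alone). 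The genuine gap is in the noise term and the empirical-norm equivalence. A coordinate-wise martingale Bernstein bound ``applied uniformly across $i$'' cannot give $\sum_i\nu_n(\phi_i)^2\lesssim D_{l_n}/(n\Delta)$ with probability $1-e^{-Dn\Delta\eps_n^2}$: each $\nu_n(\phi_i)$ has sub-Gaussian scale $(n\Delta)^{-1/2}$, so a per-coordinate failure probability of order $e^{-Dn\Delta\eps_n^2}$ forces a per-coordinate threshold of order $\eps_n$, and the union bound then only yields $\sum_i\nu_n(\phi_i)^2\lesssim D_{l_n}\eps_n^2$, too large by the diverging factor $D_{l_n}$. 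To exploit cancellation one must bound $\sup\{\nu_n(t):t\in S_{l_n},\norm{t}_\mu=1\}$ by a chaining argument, as in \Cref{lem:nu}; but then the bound contains the term $U(D_{l_n}+x)$ with $U\asymp\sup_t\norm{t}_\infty/(n\Delta)$, and over the whole unit sphere of $S_{l_n}$ one has $\sup\{\norm{t}_\infty:\norm{t}_\mu=1\}\asymp D_{l_n}^{1/2}$.

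This is the key idea you are missing. With $D_{l_n}$ allowed to be as large as $Ln\Delta\eps_n^2$, the quantity $D_{l_n}^{1/2}\eps_n$ need not be bounded, so both your uniform comparison of $\norm{\cdot}_n$ and $\norm{\cdot}_\mu$ on all of $S_{l_n}$ (and likewise the control of the quadratic variation via \Cref{thm:concentration-for-diffusions} applied to $\phi_i^2$, whose sup-norm is of order $D_{l_n}$) and the chained noise bound fail to reach the level $e^{-Dn\Delta\eps_n^2}$ unless one additionally assumes something like $D_{l_n}=O(\eps_n^{-2})$, i.e.\ $n\Delta\eps_n^4=O(1)$ — a condition not among the hypotheses of the theorem. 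The paper circumvents this by localising: it works on $I_n=\{t\in S_{l_n}:\norm{t}_\mu=1,\ \norm{t}_\infty\le C_1\eps_n^{-1}\}$, proves the norm equivalence (event $\Omega_n$) and the noise bound only over $I_n$, and disposes of the complementary event $\Aa_n^c$ deterministically: since $\norm{\tilde b_n}_\infty\le K_0+1$ by the truncation defining $\tilde S_{l_n}$ and $\norm{\pi_{l_n}b}_\infty\le K_0+1$, the normalised difference $\tilde t_n$ can fail the sup-norm cutoff only if $\norm{\tilde b_n-\pi_{l_n}b}_\mu\lesssim\eps_n$, in which case the conclusion holds trivially. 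Without this localisation step (or an added hypothesis restricting $D_{l_n}$ relative to $\eps_n^{-2}$), your argument does not deliver the stated exponential tail in the generality claimed.
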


\begin{remark}
Previous proofs of Bayesian contraction rates using the concentration of estimators approach (see \cite{Gine2011a},\cite{Nickl2017},\cite{Ray2013}) have used duality arguments, i.e.\ the fact that $\norm{f}_2=\sup_{v : \norm{v}_2=1} \ip{f,v}$, to demonstrate that the linear estimators considered satisfy a concentration inequality of the desired form. A key insight of this paper is that for the model we consider we can achieve the required concentration using the above \emph{minimum contrast} estimator (see Birg{\'e} \& Massart \cite{Birge1998}), for which we need techniques which differ substantially from duality arguments. 
\end{remark}
Before proceeding to the proof, we demonstrate how this can be used to prove the existence of tests of $b_0$ against suitably separated alternatives. 
\begin{proof}[Proof of \Cref{lem:ExistenceOfTests}]
Let $\tilde{b}_n$ be the estimator outlined above and let $D>0$. Let $C=C(\Ii,L_0,D,L)$ be as in \Cref{thm:bhat_concentrates} and let $M=2C$. It's not hard to see that $\psi_n=\II\braces{\norm{\tilde{b}_n-b}_2>C\eps_N}$ is a test with the desired properties.
\end{proof}

\begin{proof}[Proof of \Cref{thm:bhat_concentrates}]
It is enough to show that, uniformly across $b\in\Theta_n$, for any $D>0$ there is a $C>0$ such $ P_{b} \brackets*{\norm{\tilde{b}_n -b}_2>C\eps_n}\leq 14 e^{-Dn\Delta\eps_n^2},$ because by initially considering a $D'>D$ and finding the corresponding $C'$, we can eliminate the factor of $14$ in front of the exponential. 

The proof is structured as follows. Our assumptions ensure that the $L^2$-- and $L^2(\mu)$--norms are equivalent. We further show that the $L^2(\mu)$--norm is equivalent to the empirical norm $\norm{\cdot}_n$ on an event of sufficiently high probability. Finally, the definition of the estimator will allow us to control the empirical distance $\norm{\tilde{b}_n-b}_n$. 

To this end, write $\tilde{t}_n=(\tilde{b}_n-\pi_{l_n}b)\norm{\tilde{b}_n-\pi_{l_n}b}_\mu^{-1}$ (defining $\tilde{t}_n=0$ if $\tilde{b}_n=\pi_{l_n}b$) and introduce the following set and events: 
\begin{align*} 
I_n&=\braces*{t\in S_{l_n} \st \norm{t}_\mu=1, \norm{t}_\infty \leq C_1 \eps_n^{-1}}, \\ 
\Aa_n &= \braces*{\tilde{t}_n\in I_n}\cup\braces{\tilde{t}_n=0},\\
\Omega_n&=\braces*{\abs*{\norm{t}_n^2-1}\leq \frac{1}{2}, \:\forall t\in I_n}, 
\end{align*} 
where the constant $C_1$ is to be chosen. Then we can decompose 
\[ P_b\brackets[\big]{\norm{\tilde{b}_n-b}_2>C\eps_n}\leq  P_b\brackets[\big]{\norm{\tilde{b}_n-b}_2\II_{\Aa_n^c}>C\eps_n}+P_b\brackets[\big]{\Omega_n^c} +P_b(\brackets[\big]{\norm{\tilde{b}_n-b}_2\II_{\Aa_n\cap \Omega_n}>C\eps_n}.\] 
Thus, we will have proved the theorem once we have completed the following:
\begin{enumerate}
\item Show the theorem holds (deterministically) on $\Aa_n^c$, for a large enough constant $C$. 
\item Show that $P_{b}(\Omega_n^c) \leq 4e^{-Dn\Delta\eps_n^2}$ for a suitable choice of $C_1$. 
\item Show that, for any $D$, we can choose a $C$ such that  $P_b\brackets[\big]{\norm{\tilde{b}_n-b}_2\II_{\Aa_n\cap\Omega_n}>C\eps_n}\leq 10e^{-Dn\Delta\eps_n^2}$. 
\end{enumerate}

\paragraph{Step 1:}
Intuitively we reason thus. The event $\Aa_n^c$ can only occur if the $L^2(\mu)$--norm of $\tilde{b}_n-\pi_{l_n}b$ is small compared to the $L^\infty$--norm. Since we have assumed a uniform supremum bound on functions $b\in\Theta$, in fact $\Aa_n$ holds unless the $L^2(\mu)$--norm is small in absolute terms. But if $\norm{\tilde{b}_n-\pi_{l_n}b}_\mu$ is small, then so is $\norm{\tilde{b}_n-b}_2$. We formalise this reasoning now.

For a constant $C_2$ to be chosen, define \[\Aa_n'=\braces{\norm{\tilde{b}_n-\pi_{l_n}b}_\mu> C_2 \eps_n}.\] On $\Aa_n'$ we have $\norm{\tilde{t}_n}_\infty \leq (\norm{\tilde{b}_n}_\infty+\norm{\pi_{l_n}b}_\infty)C_2^{-1}\eps_n^{-1}.$ Note $\norm{\tilde{b}_n}_\infty\leq K_0+1$ by definition. Since, for $n$ large enough, $\norm{\pi_{l_n}b-b}_\infty \leq 1$ uniformly across $b\in \Theta_n\subseteq \Theta$ by \Cref{eqn:WaveletSeriesConverges} so that $\norm{\pi_{l_n}b}_\infty\leq\norm{b}_\infty+1\leq K_0+1$, we deduce that on $\Aa_n'$, $\norm{\tilde{t}_n}_\infty \leq (2K_0+2)C_2^{-1}\eps_n^{-1}$. Since also $\norm{\tilde{t}_n}_\mu=1$ (or $\tilde{t}_n=0$) by construction, we deduce $\Aa_n'\subseteq \Aa_n$ if $C_2\geq C_1^{-1}(2K_0+2)$.

Then on $(\Aa_n')^c\supseteq \Aa_n^c$ we find, using that $b\in\Theta_n$ and using $\norm{\cdot}_2\leq \pi_L^{-1/2}\norm{\cdot}_\mu$, \[\norm{\tilde{b}_n-b}_2\leq \norm{\tilde{b}_n-\pi_{l_n}b}_2+\norm{\pi_{l_n}b-b}_2 \leq (C_2\pi_L^{-1/2}+1)\eps_n.\] So on $\Aa_n^c$, we have $\norm{\tilde{b}_n-b}_2\leq C\eps_n$ deterministically for any $C\geq C_2\pi_L^{-1/2}+1$. That is, for $C$ large enough (depending on $C_1$ and $\Ii$), $P_b\brackets[\big]{\norm{\tilde{b}_n-b}_2\II_{\Aa_n^c}>C\eps_n}=0$.

\paragraph{Step 2:}
We show that for $n$ sufficiently large, and $C_1=C_1(\Ii,D,L)$ sufficiently small, $P_{b}(\Omega_n^c)\leq 4e^{-Dn\Delta\eps_n^2}.$

For $t\in I_n$ we have $\abs[\Big]{\norm{t}_n^2-1}=n^{-1}\abs[\Big]{\sum_{k=1}^n t^2(X_{k\Delta})-E_\mu[t^2]}.$ Thus \Cref{thm:concentration-for-diffusions} can be applied to $\Omega_n^c=\braces*{\sup_{t\in I_n} n^{-1}\abs[\Big]{\sum_{k=1}^n t^2(X_{k\Delta})-E_\mu[t^2]}>1/2}.$
Each $t\in I_n$ has $\norm{t^2}_\infty \leq C_1^2 \eps_n^{-2}$ and 
$\norm{t^2}_\mu^2= E_\mu [t^4] \leq \norm{t^2}_\infty \norm{t}_\mu^2\leq C_1^2 \eps_n^{-2}.$
Since the indexing set $I_n$ lies in a vector space of dimension $D_{l_n}$, we apply the \namecref{thm:concentration-for-diffusions} with $x=Dn\Delta\eps_n^2$ to see
\[P_{b}\brackets*{\sup_{t\in I_n}\abs*{\sum_{k=1}^n t^2(X_{k\Delta})-E_\mu[t^2]}\geq 36\max\braces{A,B}}\leq 4 e^{-Dn\Delta\eps_n^2}.\]
where $A=\sqrt{\tilde{\kappa} C_1^2 n\Delta^{-1} \eps_n^{-2} (Dn\Delta\eps_n^2+D_{l_n})}$ and $B=\tilde{\kappa} C_1^2 \Delta^{-1}\eps_n^{-2}(Dn\Delta\eps_n^2+D_{l_n})$, for some constant $\tilde{\kappa}=\tilde{\kappa}(\Ii)$.
Provided we can choose $C_1$ so that $36\max\braces{A/n,B/n}\leq 1/2$ the result is proved. Such a choice for $C_1$ can be made as we have assumed $D_{l_n}\leq L n\Delta \eps_n^2$.

\paragraph{Step 3:}
Since $b\in \Theta_n$ and $\pi_{l_n}$ is $L^2$-orthogonal projection, we have $\norm{\tilde{b}_n-b}_2^2\leq \norm{\tilde{b}_n-\pi_{l_n}b}_2^2+\eps_n^2$.  Recall that $\norm{\cdot}_2\leq \pi_L^{-1/2}\norm{\cdot}_\mu$ and note that on $\Aa_n\cap \Omega_n$, we further have  $\frac{1}{2}\norm{\tilde{b}_n-\pi_{l_n}b}_\mu^2\leq\norm{\tilde{b}_n-\pi_{l_n}b}_n^2.$ 

Since also $\norm{\tilde{b}_n-\pi_{l_n}b}_n^2\leq 2(\norm{\pi_{l_n}b-b}_n^2+\norm{\tilde{b}_n-b}_n^2)$ we deduce that
\[\norm{\tilde{b}_n-b}_2^2\II_{\Aa_n\cap \Omega_n}\leq \frac{1}{\pi_L} \brackets*{4\norm{\pi_{l_n}b-b}_n^2+ 4\norm{\tilde{b}_n-b}_n^2\II_{\Aa_n\cap \Omega_n}}+\eps_n^2,\]
where we have dropped indicator functions from terms on the right except where we will need them later.
Thus, using a union bound, \[P_b(\norm{\tilde{b}_n-b}_2\II_{\Aa_n\cap \Omega_n}>C\eps_n)\leq P_b\brackets[\big]{\norm{\pi_{l_n}b-b}_n^2>C'\eps_n^2}+ P_b\brackets[\big]{\norm{\tilde{b}_n-b}_n^2\II_{\Aa_n\cap \Omega_n}>C'\eps_n^2},\] for some constant $C'$ (precisely we can take $C'=\pi_L (C^2-1)/8$). It remains to show that both probabilities on the right are exponentially small.

\paragraph{Bounding $P_{b}\brackets*{\norm{\pi_{l_n}b-b}_n > C \eps_n}$:}
We show that for any $D>0$ there is a constant $C$ such that $P_{b}\brackets*{\norm{\pi_{l_n}b-b}_n > C \eps_n}\leq 2e^{-Dn\Delta\eps_n^2},$ for all $n$ sufficiently large.
Since $E_{b} \norm{g}_n^2=\norm{g}_\mu^2$ for any \mbox{1--periodic} deterministic function $g$ and $\norm{\pi_{l_n} b-b}_\mu^2 \leq \pi_U\norm{\pi_{l_n}b-b}_2^2\leq \pi_U \eps_n^2$ for $b\in \Theta_n$, it is enough to show that 
\begin{equation} \label{eqn:normnConcentrates'}P_{b}\brackets*{\abs[\big]{\norm{\pi_{l_n}b-b}_n^2-E_b\norm{\pi_{l_n}b-b}_n^2} > C \eps_n^2}\leq 2e^{-Dn\Delta\eps_n^2}\end{equation} 
for some different $C$. As in Step 2, we apply \Cref{thm:concentration-for-diffusions}, but now working with the single function $(\pi_{l_n}b-\nobreak b)^2$. For large enough $n$ we have the bounds $\norm{\pi_{l_n}b-b}_\infty\leq 1$ (derived from \Cref{eqn:WaveletSeriesConverges}), and $\norm{(\pi_{l_n}b-\nobreak b)^2}_\mu \leq$ $\norm{\pi_{l_n}b-b}_\infty\norm{\pi_{l_n}b-b}_\mu\leq \pi_U^{1/2} \eps_n$ (because $b\in\Theta_n$) and  so applying the \namecref{thm:concentration-for-diffusions} with $x=Dn\Delta\eps_n^2$ gives
\[ P_{b} \brackets*{\abs*{\sum_{k=1}^n\sqbrackets*{ (\pi_{l_n}b-b)^2(X_{k\Delta}) - \norm{\pi_{l_n}b-b}_\mu^2}}\geq \max\braces{a,b}} \leq 2 e^{-Dn\Delta\eps_n^2},\]
for $a= \sqrt{\kappa n\Delta^{-1}\pi_U\eps_n^2 Dn\Delta\eps_n^2}=n\eps_n^2\sqrt{\kappa\pi_U D}$ and $b=\kappa \Delta^{-1} D n\Delta\eps_n^2=n\eps_n^2\kappa D$, for some constant $\kappa=\kappa(\Ii)$. We see that $a/n$ and $b/n$ are both upper bounded by a constant multiple of $\eps_n^2$, hence, by choosing $C$ large enough, \Cref{eqn:normnConcentrates'} holds. 

\paragraph{Bounding $P_b\brackets[\big]{\norm{\tilde{b}_n-b}_n^2\II_{\Aa_n\cap \Omega_n}>C\eps_n^2}$:}
We show that $P_b\brackets[\big]{\norm{\tilde{b}_n-b}_n^2\II_{\Aa_n\cap \Omega_n}>C\eps_n^2}\leq 8e^{-Dn\Delta\eps_n^2}$ for some constant $C$.

Recall an application of \Cref{eqn:WaveletSeriesConverges} showed us that $\norm{\pi_{l_n}b}_\infty \leq K_0+1$ for sufficiently large $n$, hence we see that $\pi_{l_n}b$ lies in $\tilde{S}_{l_n}$, so by definition $\gamma_n(\tilde{b}_n)\leq \gamma_n(\pi_{l_n}b)$. We now use this to show that \begin{equation}\label{eqn:empiricalNormControl} \frac{1}{4}\norm{\tilde{b}_n-b}_n^2 \II_{\Aa_n\cap \Omega_n}\leq \frac{7}{4}\norm{\pi_{l_n}b-b}_n^2+8\nu_n(\tilde{t}_n)^2\II_{\Aa_n}+ \frac{8}{n}\sum_{k=1}^n R_{k\Delta}^2,\end{equation} 
where $\nu_n(t)=\frac{1}{n}\sum_{k=1}^n t(X_{k\Delta})Z_{k\Delta}$ and we recall that $\tilde{t}_n=(\tilde{b}_n-\pi_{l_n}b)\norm{\tilde{b}_n-\pi_{l_n}b}_\mu^{-1}$. The argument, copied from \cite{Comte2007} Sections 3.2 and 6.1, is as follows.
Using $\Delta^{-1}(X_{(k+1)\Delta}-X_{k\Delta})=b(X_{k\Delta})+Z_{k\Delta}+R_{k\Delta}$ and $\gamma_n(\tilde{b}_n)-\gamma_n(b)\leq \gamma_n(\pi_{l_n}b)-\gamma_n(b)$, one shows that 
\begin{equation}\label{eqn:empiricalNormControlIntermediateStep}\norm{\tilde{b}_n-b}_n^2 \leq \norm{\pi_{l_n}b-b}_n^2 + 2\nu(\tilde{b}_n-\pi_{l_n}b)+\frac{2}{n}\sum_{k=1}^n R_{k\Delta} (\tilde{b}_n-\pi_{l_n}b)(X_{k\Delta}).\end{equation}
Repeatedly applying the AM-GM--derived inequality $2ab\leq 8 a^2+b^2/8$ yields
\begin{align*} \frac{2}{n}\sum_{k=1}^n R_{k\Delta} (\tilde{b}_n-\pi_{l_n}b)(X_{k\Delta})&\leq \frac{8}{n}\sum_{k=1}^n R_{k\Delta}^2 + \frac{1}{8} \norm{\tilde{b}_n-\pi_{l_n}b}_n^2,\\
2\nu(\tilde{b}_n-\pi_{l_n}b)=2\norm{\tilde{b}_n-\pi_{l_n}b}_\mu \nu(\tilde{t}_n)&\leq 8\nu_n(\tilde{t}_n)^2+\frac{1}{8}\norm{\tilde{b}_n-\pi_{l_n}b}_\mu^2.\end{align*} 

Next recall that on $\Aa_n\cap \Omega_n$, we have $\norm{\tilde{b}_n-\pi_{l_n}b}_\mu^2\leq 2\norm{\tilde{b}_n-\pi_{l_n}b}_n^2,$ and further recall $\norm{\tilde{b}_n-\pi_{l_n}b}_n^2\leq 2\norm{\tilde{b}_n-b}_n^2+2\norm{\pi_{l_n}b-b}_n^2$. Putting all these bounds into \Cref{eqn:empiricalNormControlIntermediateStep} yields \Cref{eqn:empiricalNormControl}, where on the right hand side we have only included indicator functions where they will help us in future steps. 
Next, by a union bound, we deduce 
\begin{multline*}P_b(\norm{\tilde{b}_n-b}_n^2\II_{\Aa_n\cap \Omega_n}>C\eps_n^2) \\ \leq  P_b(\norm{\pi_{l_n}b-b}_n^2 >C'\eps_n^2)+P_b(\nu_n(\tilde{t}_n)^2\II_{\Aa_n} >C'\eps_n^2)+P_b\brackets[\Big]{\frac{1}{n}\sum_{k=1}^n R_{k\Delta}^2 > C'\eps_n^2},\end{multline*}
for some constant $C'$ (we can take $C'=C/96$). We have already shown that $P_b(\norm{\pi_{l_n}b-b}_n>C\eps_n)\leq 2e^{-Dn\Delta\eps_n^2}$ for a large enough constant $C$, thus the following two lemmas conclude the proof.
\end{proof}

\begin{lemma}\label{lem:Rkdelta2} Under the conditions of \Cref{thm:bhat_concentrates}, for each $D>0$ there exists a constant $C=C(\Ii,L_0,D)>0$ for which, for $n$ sufficiently large,
$P_{b}\brackets*{\frac{1}{n}\sum_{k=1}^n R_{k\Delta}^2 > C \eps_n^2}\leq 2e^{-Dn\Delta\eps_n^2}.$ 
\end{lemma}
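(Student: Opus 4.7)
The plan is to bound each $R_{k\Delta}$ pointwise using the Lipschitz property of $b$ combined with a uniform modulus-of-continuity bound for the diffusion over the full time horizon $[0,n\Delta]$; this avoids the need for any Markov-chain mixing argument here and allows a crude but sufficient deterministic-on-a-good-event estimate.

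First I would observe that since $b\in\Theta$ is Lipschitz with constant $K_0$, for every $k$,
\[
|R_{k\Delta}| \;\leq\; \frac{K_0}{\Delta}\int_{k\Delta}^{(k+1)\Delta} |X_s-X_{k\Delta}|\,\ds \;\leq\; K_0\sup_{s\in[k\Delta,(k+1)\Delta]}|X_s-X_{k\Delta}|.
\]
For $n$ large we have $\Delta\leq \tau$, so applying \Cref{lem:X_holder} with $m=n\Delta$ gives that on an event $E_n$ of probability at least $1-2e^{-\lambda u^2}$,
\[
\sup_{0\leq k\leq n-1}\sup_{s\in[k\Delta,(k+1)\Delta]}|X_s-X_{k\Delta}| \;\leq\; u\,w_{n\Delta}(\Delta) \;=\; u\,\Delta^{1/2}\bigl((\log \Delta^{-1})^{1/2}+\log(n\Delta)^{1/2}\bigr),
\]
whenever $u>C\max(\log(n\Delta),1)^{1/2}$; note that by the tower law the bound holds under $P_b$ with $X_0\sim\mu_b$, not only for deterministic initial values.

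Next, on $E_n$ I would estimate
\[
\frac{1}{n}\sum_{k=1}^n R_{k\Delta}^2 \;\leq\; 2K_0^2 u^2\,\Delta\bigl(\log \Delta^{-1}+\log(n\Delta)\bigr).
\]
Here \Cref{assumption:Delta} is the key input: from $n\Delta^2\log(1/\Delta)\leq L_0$ we get $\Delta\log\Delta^{-1}\leq L_0/(n\Delta)$, and from the equivalent form $n\Delta^2\log n\leq L_0'$ and $\log(n\Delta)\leq \log n$ we get $\Delta\log(n\Delta)\leq L_0'/(n\Delta)$. Combined, $\Delta(\log\Delta^{-1}+\log(n\Delta))\leq (L_0+L_0')/(n\Delta)$, and hence on $E_n$,
\[
\frac{1}{n}\sum_{k=1}^n R_{k\Delta}^2 \;\leq\; \frac{2K_0^2(L_0+L_0')u^2}{n\Delta}.
\]

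Finally I would calibrate $u$. Set $u^2 = c\,n\Delta\eps_n^2$ for a constant $c$ to be chosen. The assumption $n\Delta\eps_n^2/\log(n\Delta)\to\infty$ guarantees $u>C\max(\log(n\Delta),1)^{1/2}$ for $n$ large, so \Cref{lem:X_holder} applies. This choice yields
\[
\frac{1}{n}\sum_{k=1}^n R_{k\Delta}^2 \;\leq\; 2K_0^2(L_0+L_0')c\,\eps_n^2 \quad\text{on } E_n,
\]
while $P_b(E_n^c)\leq 2e^{-\lambda c\,n\Delta\eps_n^2}$. Choosing $c\geq D/\lambda$ (which also determines the constant $C=2K_0^2(L_0+L_0')c$) gives the required conclusion. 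There is no real obstacle here once \Cref{lem:X_holder} is in hand; the one thing to watch is that both the Assumption on $\Delta$ and the lower bound on $u$ coming from the Gaussian-process modulus of continuity are needed simultaneously, which is why the hypothesis $n\Delta\eps_n^2/\log(n\Delta)\to\infty$ is precisely what makes the argument close.
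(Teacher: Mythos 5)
Your proposal is correct and follows essentially the same route as the paper: Lipschitz bound on $R_{k\Delta}$, then \Cref{lem:X_holder} with $m=n\Delta$ and $u^2\asymp n\Delta\eps_n^2$ (the paper takes $u=D^{1/2}\lambda^{-1/2}(n\Delta\eps_n^2)^{1/2}$, i.e.\ your $c=D/\lambda$), with \Cref{assumption:Delta} converting the $\Delta\log$ factors into a bound of order $\eps_n^2$ and the hypothesis $n\Delta\eps_n^2/\log(n\Delta)\to\infty$ validating the lower bound on $u$. The only cosmetic omission is noting that $\Delta\le e^{-1}$ (for $n$ large) so that $\sup_{\delta\le\Delta}w_{n\Delta}(\delta)=w_{n\Delta}(\Delta)$, which the paper records parenthetically.
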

\begin{lemma}\label{lem:nu}
Under the conditions of \Cref{thm:bhat_concentrates}, for each $D>0$ there exists a constant $C=C(\Ii,L,D)>0$ for which, for $n$ sufficiently large, $P_{b}(\nu_n(\tilde{t}_n)\II_{\Aa_n} > C\eps_n)\leq 4e^{-Dn\Delta\eps_n^2}.$
\end{lemma}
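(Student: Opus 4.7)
The plan is to first reduce the problem to a supremum. By the definition of $\Aa_n$, on this event we either have $\tilde{t}_n=0$ (so $\nu_n(\tilde{t}_n)=0$) or $\tilde{t}_n\in I_n$, so that $\abs{\nu_n(\tilde{t}_n)}\II_{\Aa_n}\leq \sup_{t\in I_n}\abs{\nu_n(t)}$; it suffices to bound this supremum. The supremum can then be handled by combining a pointwise exponential bound, derived from the martingale structure, with a covering argument, whose log-cardinality is absorbed into the exponent by the sieve condition $D_{l_n}\leq Ln\Delta \eps_n^2$.

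For the pointwise bound, I would exploit the martingale structure directly. Writing $n\nu_n(t)=\int_\Delta^{(n+1)\Delta} H^t_s\,\dW_s$ with integrand $H^t_s=\Delta^{-1}t(X_{\lfloor s/\Delta\rfloor\Delta})\sigma(X_s)$, we recognise this as a continuous-time stochastic integral with predictable quadratic variation bounded above by $\sigma_U^2 n\Delta^{-1}\norm{t}_n^2$. The key observation is that on the event $\Omega_n$ from Step 2 of the proof of \Cref{thm:bhat_concentrates}, every $t\in I_n$ satisfies $\norm{t}_n^2\leq 3/2$, so the quadratic variation is bounded pathwise by $3\sigma_U^2 n/(2\Delta)$. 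The standard exponential inequality for continuous martingales, of the form $P(\abs{M}\geq x,\ \qv{M}\leq v)\leq 2e^{-x^2/(2v)}$, then yields, for each fixed $t\in I_n$,
\[P_{b}\brackets*{\abs{\nu_n(t)}>C\eps_n,\ \Omega_n}\leq 2\exp\brackets*{-c\,C^2 n\Delta\eps_n^2}\]
for a constant $c=c(\sigma_U)>0$. Since $P_b(\Omega_n^c)\leq 4e^{-Dn\Delta\eps_n^2}$, the unconditional bound has the right shape.

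To pass from pointwise to supremum, I would discretise. The set $I_n$ lies in the $D_{l_n}$-dimensional space $S_{l_n}$ and is bounded in the $L^2_\mu$--norm, so standard covering bounds furnish a $\delta$-net $N_\delta\subseteq I_n$ of cardinality at most $(3/\delta)^{D_{l_n}}$. A union bound over $N_\delta$ combined with the pointwise estimate gives $P_{b}(\sup_{t\in N_\delta}\abs{\nu_n(t)}>C\eps_n,\ \Omega_n)\leq 2(3/\delta)^{D_{l_n}}e^{-cC^2 n\Delta\eps_n^2}$, and by the hypothesis $D_{l_n}\leq Ln\Delta\eps_n^2$ the logarithmic covering contribution $L\log(3/\delta)\cdot n\Delta\eps_n^2$ is of the correct order and can be absorbed by choosing $C=C(\Ii,L,D,\delta)$ sufficiently large.

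The main obstacle is the oscillation term: bounding $\sup_{t\in I_n}\inf_{t'\in N_\delta}\abs{\nu_n(t)-\nu_n(t')}=\sup\abs{\nu_n(t-t')}$ requires controlling $\qv{n\nu_n(t-t')}\leq \sigma_U^2 n\Delta^{-1}\norm{t-t'}_n^2$, but on $\Omega_n$ this empirical norm is only controlled for increments that themselves satisfy a sup-norm constraint like that defining $I_n$, which fails automatically for arbitrarily close $t,t'\in I_n$. The cleanest resolution is to enlarge $\Omega_n$ to an event on which $\norm{\cdot}_n^2\leq (3/2)\norm{\cdot}_\mu^2$ holds uniformly across a rescaled sup-norm-bounded class containing all the relevant increments. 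This auxiliary event can be produced by rerunning the argument of Step 2 of the proof of \Cref{thm:bhat_concentrates} with an adjusted constant $C_1$ (paying only a larger but still admissible constant). Taking $\delta$ equal to a small absolute constant and chaining once through a single finer net then suffices, and the resulting combined exponential bound is of the required form $4e^{-Dn\Delta\eps_n^2}$.
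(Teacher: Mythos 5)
Your reduction to $\sup_{t\in I_n}\abs{\nu_n(t)}$ and your pointwise bound are sound, and the pointwise step is essentially the paper's (the paper invokes Lemma 2 of Comte et al., which is exactly the exponential martingale inequality you re-derive, in the form $P_b(\nu_n(t)\geq \xi,\ \norm{t}_n^2\leq u^2)\leq \exp(-n\Delta\xi^2/(2\sigma_U^2u^2))$). The genuine gap is in the passage from pointwise to uniform. By conditioning on the fixed event $\Omega_n$, your tail bound for $\nu_n(t)$ has quadratic-variation parameter $\tfrac32\sigma_U^2 n\Delta^{-1}$ for \emph{every} $t\in I_n$, i.e.\ it does not scale with $\norm{t}_\mu$ or $\norm{t}_\infty$. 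Consequently the oscillation term in your net argument does not become small with the net radius, and ``chaining once through a single finer net'' does not terminate anything: after any finite number of nets you still face a supremum over a continuum of increments whose quadratic variation, on your event, is of the same order as that of the full process. Your proposed repair is also flawed as stated: the increments $t-t'$ rescaled to unit $L^2(\mu)$--norm are \emph{not} a sup-norm-bounded class at level $O(\eps_n^{-1})$ --- their ratio $\norm{\cdot}_\infty/\norm{\cdot}_\mu$ can be as large as order $2^{l_n/2}=D_{l_n}^{1/2}$. Rerunning Step 2 for a class with sup-norm bound of that size requires (in the notation of Step 2) $U(D_{l_n}+x)/n\lesssim n\Delta\eps_n^4$ to be small, which is not implied by the hypotheses of \Cref{thm:bhat_concentrates} (only $\eps_n\to0$, $n\Delta\eps_n^2/\log(n\Delta)\to\infty$, $D_{l_n}\leq Ln\Delta\eps_n^2$ are assumed; $\eps_n$ may tend to zero arbitrarily slowly, so $n\Delta\eps_n^4$ may diverge). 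So the auxiliary event you need either does not contain the relevant increments or cannot be produced by the Step 2 argument ``with an adjusted constant $C_1$''.

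The paper avoids this by keeping the pointwise bound homogeneous in $t$: it does not condition on $\Omega_n$, but instead, for each individual $t$, chooses the threshold $u^2=\norm{t}_\mu^2+\xi\norm{t}_\infty$ and bounds $P_b(\norm{t}_n^2>u^2)$ per $t$ using \Cref{thm:concentration-for-diffusions}. This yields $P_b(\nu_n(t)\geq\max(\sqrt{v^2x},ux))\leq 2e^{-x}$ with $v^2\propto(n\Delta)^{-1}\norm{t}_\mu^2$ and $u\propto(n\Delta)^{-1}\norm{t}_\infty$, a mixed sub-Gaussian/sub-exponential tail whose parameters scale with $t$. That scaling is precisely what lets the generic chaining argument of Baraud (used already in the proof of \Cref{thm:concentration-for-diffusions}) control $\sup_{t\in I_n}\nu_n(t)$ over the $D_{l_n}$--dimensional index set, after which $x=Dn\Delta\eps_n^2$ and $D_{l_n}\leq Ln\Delta\eps_n^2$ give the claim. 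If you want to keep your event-based route, you would need either such a homogeneous bound for the increments, or a uniform \emph{multiplicative} equivalence $\norm{g}_n^2\lesssim\norm{g}_\mu^2$ over all of $S_{l_n}$ (which, as above, needs an additional condition such as $n\Delta\eps_n^4$ bounded, available in the eventual application but not in the lemma as stated).
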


\begin{proof}[Proof of \Cref{lem:Rkdelta2}]

Recall $R_{k\Delta}=\frac{1}{\Delta}\int_{k\Delta}^{(k+1)\Delta} (b(X_s)-b(X_{k\Delta}))\ds,$ and recall any $b\in\Theta$ is Lipschitz, with Lipschitz constant at most $K_0$, so $\abs{R_{k\Delta}}\leq K_0 \max_{s\leq \Delta} \abs{X_{k\Delta+s}-X_{k\Delta}}.$ 
It is therefore enough to bound $\sup\braces{\abs{X_t-X_s} : \: s,t\in [0,n\Delta],~ \abs{t-s}\leq \Delta }$. 

We apply the H{\"o}lder continuity result (\Cref{lem:X_holder}) with $u=D^{1/2}\lambda^{-1/2}(n\Delta\eps_n^2)^{1/2}$ for $\lambda=\lambda(\Ii)$ the constant of the \namecref{lem:X_holder}, noting that the assumption $n\Delta\eps_n^2/\log(n\Delta)\to \infty$ ensures that $u$ is large enough compared to $m=n\Delta$ that the conditions for the \namecref{lem:X_holder} are met, at least when $n$ is large. We see that 
\[\sup_{\substack{s,t\in[0,n\Delta] \\ \abs{t-s}\leq \Delta}} \abs{X_{t}-X_{s}}\leq \Delta^{1/2}\brackets*{\log(n\Delta) ^{1/2}+\log(\Delta^{-1})^{1/2}} D^{1/2}\lambda^{-1/2}(n\Delta\eps_n^2)^{1/2},\] on an event $\Dd$ of probability at least $1-2e^{-Dn\Delta\eps_n^2}$, (we have used that, for $n$ large enough, $\Delta\leq \min(\tau,e^{-1})$ in order to take the supremum over $\abs{t-s}\leq \Delta$ and to see $\sup_{\delta\leq \Delta}w_m(\delta)=w_m(\Delta)$).

Now observe that $\log(n\Delta)^{1/2}\leq(\log(\Delta^{-1})^{1/2})$ for large enough $n$ because $n\Delta^2\to 0$ (so $n\Delta\leq \Delta^{-1}$ eventually). Further, from the assumption $n\Delta^2 \log(\Delta^{-1})\leq L_0$ we are able to deduce that  $\Delta^{1/2}\log(\Delta^{-1})^{1/2}(n\Delta\eps_n^2)^{1/2}\leq L_0^{1/2}\eps_n$. It follows that on $\Dd$, we have $R_{k\Delta}\leq C\eps_n$ for a suitably chosen constant $C$ (independent of $k$ and $n$), which implies the desired concentration.
\end{proof}

\begin{proof}[Proof of \Cref{lem:nu}] Recall for $Z_{k\Delta}=\frac{1}{\Delta}\int_{k\Delta}^{(k+1)\Delta} \sigma(X_s)\dW_s$ we set $\nu_n(t)=\frac{1}{n}\sum_{k=1}^n t(X_{k\Delta})Z_{k\Delta}.$  
The martingale-derived concentration result Lemma 2 in Comte et al.\ \cite{Comte2007} (the model assumptions in \cite{Comte2007} are slightly different to those made here, but the proof of the lemma equally applies in our setting) tells us 
$ P_{b}(\nu_n(t) \geq \xi,\norm{t}_n^2\leq u^2) \leq \exp\brackets*{-\frac{n\Delta\xi^2}{2\sigma_U^2 u^2}},$ for any $t,u$, and for any drift function $b\in\Theta$, so that
\begin{equation*} \tag{$\star$}\label{eqn:nuConcentrates} P_{b}(\nu_n(t) \geq \xi) \leq \exp\brackets*{-\frac{n\Delta\xi^2}{2\sigma_U^2 u^2}}+P_{b}\brackets{\norm{t}_n^2>u^2}.\end{equation*}
We can apply \Cref{thm:concentration-for-diffusions} to see that, for some constant $\kappa=\kappa(\Ii)$, \begin{align*}
P_{b}\brackets{\norm{t}_n^2>u^2} &= P_{b}\brackets*{\frac{1}{n}\brackets*{\sum_{k=1}^n t(X_{k\Delta})^2-\norm{t}_\mu^2} >u^2-\norm{t}_\mu^2}\\
&\leq \exp\brackets*{-\frac{1}{\kappa} \Delta \min\braces*{ \frac{n^2(u^2-\norm{t}_\mu^2)^2}{n\norm{t^2}_\mu^2},\frac{n(u^2-\norm{t}_\mu^2)}{\norm{t^2}_\infty}}}\\
&\leq \exp\brackets*{-\frac{1}{\kappa} n\Delta (u^2-\norm{t}_\mu^2)\norm{t}_\infty^{-2}\min \brackets{u^2\norm{t}_\mu^{-2} -1,1}},
\end{align*}
where to obtain the last line we have used that $\norm{t^2}_\mu^2 \leq \norm{t}_\infty^2 \norm{t}_\mu^2$.

Now choose $u^2=\norm{t}_\mu^2+\xi\norm{t}_\infty$. Then $\xi^2/u^2 \geq \frac{1}{2}\min(\xi^2/\norm{t}_\mu^2,\xi/\norm{t}_\infty)$ so that, returning to \Cref{eqn:nuConcentrates}, we find
\begin{align*}
P_{b}(\nu_n(t) \geq \xi) &\leq \exp\brackets*{-\frac{n\Delta}{4\sigma_U^2} \min\brackets{\xi^2\norm{t}_\mu^{-2},\xi\norm{t}_\infty^{-1}}}  + \exp\brackets[\Big]{-\frac{1}{\kappa} n\Delta \xi  \min(\xi\norm{t}_\mu^{-2},\norm{t}_\infty^{-1})}\\
& \leq 2 \exp\brackets*{-\frac{1}{\kappa'} n\Delta \min\brackets{\xi^2\norm{t}_\mu^{-2},\xi\norm{t}_\infty^{-1}}},
\end{align*} for some constant $\kappa'=\kappa'(\Ii)$. 

By changing variables we attain the bound
$P_{b}\brackets{\nu_n(t) \geq \max\brackets{\sqrt{v^2x},ux}} \leq 2 \exp\brackets*{-x},$
where $v^2=\kappa' (n\Delta)^{-1}\norm{t}_\mu^2$ and $u=\kappa' (n\Delta)^{-1} \norm{t}_\infty$. 
Then, as in \Cref{thm:concentration-for-diffusions}, a standard chaining argument allows us to deduce that
\[P_{b} \brackets*{\sup_{t\in I_n}\nu_n(t)\geq \tilde{\kappa} \brackets[\Big]{\sqrt{V^2(D_{l_n}+x)}+U(D_{l_n}+x)}}\leq 4 e^{-x},\]
for $V^2=\sup_{t\in I_n} \norm{t}_\mu^2 (n\Delta)^{-1}=(n\Delta)^{-1}$, $U=\sup_{t\in I_n} \norm{t}_\infty (n\Delta)^{-1}=C_1\eps_n^{-1}(n\Delta)^{-1}$, and for a constant $\tilde{\kappa}=\tilde{\kappa}(\Ii)$. Taking $x=Dn\Delta\eps_n^2$ and recalling the assumption $D_{l_n}\leq L n\Delta \eps_n^2$ we obtain the desired result (conditional on $\tilde{t}_n\in I_n$, which is the case on the event $\Aa_n$).
\end{proof}

\section{Small ball probabilities}\label{sec:SmallBallProbs}
Now we show that the Kullback--Leibler divergence between the laws corresponding to different parameters $b_0,b$ can be controlled in terms of the $L^2$--distance between the parameters.
Denote by $K(p,q)$ the Kullback--Leibler divergence between probability distributions with densities $p$ and $q$, i.e.\ $K(p,q)= E_p\log(\frac{p}{q})=\int\log(\frac{p(x)}{q(x)}){\dif p}(x).$ Also write
\[\KL(b_0,b)=~~E_{b_0} \sqbrackets*{ \log\brackets*{\frac{p_0(\Delta,X_0,X_\Delta)}{p_b(\Delta,X_0,X_\Delta)}}}. \] 
Recalling that $p_b^{(n)}(x^{(n)})=\pi_b(x_0)\prod_{i=1}^{n}p_b(\Delta,x_{(i-1)\Delta},x_{i\Delta})$ is the density on $\RR^{n+1}$ of $X^{(n)}$ under $P_b$, we introduce the following Kullback--Leibler type neighbourhoods: for $\eps>0$, define
\begin{align*}\label{eqn:BKLdefinition} &B_{KL}^{(n)}(\eps)=\braces*{b\in \Theta \st K(p_0^{(n)},p_b^{(n)})\leq (n\Delta+1)\eps^2,~ \Var_{b_0}\brackets[\Big]{\log\frac{p_0^{(n)}}{p_b^{(n)}}}\leq (n\Delta+1)\eps^2}, \\
&  B_\eps=\braces*{b\in \Theta \st K(\pi_0,\pi_b)\leq \eps^2,~\Var_{b_0}\brackets[\Big]{\log\frac{\pi_0}{\pi_b}}\leq \eps^2,~\KL(b_0,b)\leq \Delta \eps^2,~\Var_{b_0}\brackets[\Big]{\log\frac{p_0}{p_b}}\leq \Delta\eps^2}.
\end{align*}
Note that $\KL(b_0,b)$ and $B_\eps$ implicitly depend on $n$ via $\Delta$. 

The main result of this section is the following. 

\begin{theorem}\label{thm:SmallBallProbs} 
Consider data $X^{(n)}=(X_{k\Delta})_{0\leq k \leq n}$ sampled from a solution $X$ to \Cref{eqn:sde} under \Cref{assumptions:b,assumptions:sigma,assumption:Delta,assumption:invariant}. Let $\eps_n\to0$ be a sequence of positive numbers such that $n\Delta\eps_n^2\to \infty$. Then there is a constant $A=A(\Ii)$ such that, for all $n$ sufficiently large, $\braces{b\in \Theta \st \norm{b-b_0}_2\leq A\eps_n}\subseteq B_{KL}^{(n)}(\eps_n)$. 
\end{theorem}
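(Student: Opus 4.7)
The plan is to decompose the proof into two stages. First, I will show that the intermediate set $B_\eps$ (bounding single-step KL and single-step variance, plus the initial invariant density terms) is contained in $B_{KL}^{(n)}(C\eps)$ for a suitable absolute constant $C$. Second, I will show that $L^2$-closeness of $b$ to $b_0$ implies $b \in B_\eps$ with $\eps$ proportional to $\norm{b-b_0}_2$. Combining, we obtain the result with $A$ chosen as the reciprocal of the product of the two implicit constants.

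For the first stage, factorise $\log(p_0^{(n)}/p_b^{(n)}) = \log(\pi_0/\pi_b)(X_0) + \sum_{i=1}^n L_i$ with $L_i = \log(p_0/p_b)(X_{(i-1)\Delta}, X_{i\Delta})$. By the periodic structure of the diffusion, the conditional KL $x \mapsto K(p_0(\Delta, x, \cdot), p_b(\Delta, x, \cdot))$ is $1$-periodic, so its expectation under $E_{b_0}$ equals $\KL(b_0,b)$ irrespective of the time index. This yields $K(p_0^{(n)}, p_b^{(n)}) = K(\pi_0,\pi_b) + n\KL(b_0,b) \leq (n\Delta+1)\eps^2$. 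For the variance, decompose $L_i = M_i + K_i^{*}$ where $M_i = L_i - E[L_i \mid X_{(i-1)\Delta}]$ are martingale differences and $K_i^{*} = E[L_i \mid X_{(i-1)\Delta}]$ is a periodic function of the Markov chain. Then $\Var(\sum L_i) \lesssim \Var(\sum M_i) + \Var(\sum K_i^{*})$; the first term equals $n \cdot E[\Var(L_1 \mid X_0)] \leq n\Var_{b_0}[L_1] \leq n\Delta\eps^2$, while the second is bounded by applying \Cref{thm:concentration-for-diffusions} (integrating the sub-exponential tails) to the centred periodic function $K_1^{*} - \KL(b_0,b)$, yielding $O(n\Delta \cdot \norm{K^{*}}_\mu^2 / \Delta) \lesssim n \norm{K^{*}}_\infty \KL(b_0,b)$, which is $O(n\Delta\eps^2)$. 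The $X_0$ contribution contributes at most $\eps^2$, so $\Var \leq (n\Delta+1)\eps^2$.

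For the second stage, the bounds $K(\pi_0,\pi_b) \leq C\norm{b-b_0}_2^2$ and $\Var_{b_0}[\log(\pi_0/\pi_b)] \leq C\norm{b-b_0}_2^2$ follow from the explicit formula for $\pi_b$ (which makes $b \mapsto \pi_b$ Lipschitz in $L^2$), combined with the uniform bounds $\pi_L \leq \pi_b \leq \pi_U$ that make the log ratio bounded in sup-norm by $C\norm{\pi_b - \pi_0}_\infty \leq C'\norm{b-b_0}_\infty$; then both KL and variance are comparable to $\norm{\pi_b-\pi_0}_2^2$ which is itself $\lesssim \norm{b-b_0}_2^2$. The bound $\KL(b_0,b) \leq C\Delta\norm{b-b_0}_2^2$ follows from the data processing inequality: applying Girsanov to the continuous path on $[0,\Delta]$ gives $K(P_{b_0}|_{[0,\Delta]}, P_b|_{[0,\Delta]}) = K(\pi_0,\pi_b) + \tfrac{\Delta}{2}E_{\mu_0}[(b-b_0)^2/\sigma^2]$, and the discrete joint law of $(X_0,X_\Delta)$ is a coarsening, so $K(\pi_0,\pi_b) + \KL(b_0,b) \leq K(\pi_0,\pi_b) + \tfrac{\Delta}{2\sigma_L^2}\pi_U\norm{b-b_0}_2^2$.

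The main obstacle, as flagged in the introduction, is the variance bound $\Var_{b_0}[\log(p_0/p_b)] \leq C\Delta\norm{b-b_0}_2^2$, since data processing does not directly control variance. I would proceed by using Girsanov on the full path: under $P_{b_0}$, $\log L^\Delta = \int_0^\Delta \frac{b_0-b}{\sigma}(X_s)\dW_s + \tfrac{1}{2}\int_0^\Delta \frac{(b_0-b)^2}{\sigma^2}(X_s)\ds$. Itô isometry gives the martingale part variance $\Delta E_{\mu_0}[(b-b_0)^2/\sigma^2] \lesssim \Delta\norm{b-b_0}_2^2$, while the bounded-variation term is $O(\Delta)$ in sup-norm so contributes $O(\Delta^2)$ to the variance, negligible compared to $\Delta\eps_n^2$ under \Cref{assumption:Delta}. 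It remains to transfer this bound from $\log L^\Delta$ to $\log(p_0/p_b)(X_0,X_\Delta)$. The key identity is that the ratio $p_0/p_b$ equals the $P_b$-conditional expectation of $L^\Delta$ given $(X_0,X_\Delta)$; writing $\log L^\Delta = \log(p_0/p_b)(X_0,X_\Delta) + \log(L^\Delta_{\mathrm{br}})$ where $L^\Delta_{\mathrm{br}}$ is the Radon--Nikodym derivative between the $P_{b_0}$- and $P_b$-bridges, one uses $E_{b_0}[\log L^\Delta_{\mathrm{br}} \mid X_0,X_\Delta] = K(\mathrm{bridges})$ together with an orthogonality argument (the bridge term has conditional mean equal to the bridge KL, hence $\log(p_0/p_b)$ is the conditional expectation of $\log L^\Delta$ minus a bounded term) to conclude $\Var_{b_0}[\log(p_0/p_b)] \leq \Var_{b_0}(E_{b_0}[\log L^\Delta \mid X_0,X_\Delta]) + \mathrm{negligible} \leq \Var_{b_0}[\log L^\Delta] + O(\Delta^2)$, completing the bound.
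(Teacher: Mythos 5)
Most of your outline runs parallel to the paper and is sound: your Stage 1 replaces the paper's \Cref{lem:VarianceTensorises} (odd/even splitting) by a martingale-difference decomposition plus tail-integration of \Cref{thm:concentration-for-diffusions}, which is a legitimate variant (modulo the implicitly used fact $\norm{K^*}_\infty\lesssim\Delta$, which does follow from data processing and Girsanov with bounded drifts), and your bounds on $K(\pi_0,\pi_b)$, $\Var_{b_0}[\log(\pi_0/\pi_b)]$ and $\KL(b_0,b)$ match \Cref{lem:Kmu-bounded,lem:KL-bounded}. The genuine gap sits exactly at the step you yourself flag as the main obstacle: the transfer of the variance bound from $\log L^\Delta$ to $\log(p_0/p_b)(X_0,X_\Delta)$. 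Writing $\log L^\Delta=\log(p_0/p_b)(X_0,X_\Delta)+\log L^\Delta_{\mathrm{br}}$ and using $E_{b_0}[\log L^\Delta_{\mathrm{br}}\mid X_0,X_\Delta]=\kappa(X_0,X_\Delta)$, the Kullback--Leibler divergence between the two bridge laws, gives $\log(p_0/p_b)(X_0,X_\Delta)=E_{b_0}[\log L^\Delta\mid X_0,X_\Delta]-\kappa(X_0,X_\Delta)$. Conditional Jensen controls the first summand by $\Var_{b_0}[\log L^\Delta]$, but to reach your claimed conclusion $\Var_{b_0}[\log(p_0/p_b)]\leq\Var_{b_0}[\log L^\Delta]+O(\Delta^2)$ you need second-moment (in effect uniform-in-endpoints) control of $\kappa$, e.g.\ $\sup_{x,y}\kappa(x,y)\lesssim\Delta$ or $E_{b_0}[\kappa(X_0,X_\Delta)^2]=o(\Delta\eps_n^2)$. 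The chain rule for Kullback--Leibler only delivers the first moment, $E_{b_0}[\kappa(X_0,X_\Delta)]\leq\tfrac{\Delta}{2}E_{\mu_0}[(b_0-b)^2/\sigma^2]$, and your parenthetical assertion that the bridge term is ``bounded'' is unsubstantiated: a uniform bound on the divergence between diffusion bridges requires handling the singular bridge drifts (equivalently, gradient estimates on $\log p_b(\Delta,x,\cdot)$), none of which your toolkit supplies. As written, the decisive inequality is asserted rather than proved, and this is precisely the hard part of the theorem.

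The paper's \Cref{lem:KL2-bounded} is engineered to avoid ever estimating a bridge divergence. It dominates the squared logarithm by its smallest convex majorant $h$, uses \Cref{lem:DiffusionBridge,lem:abstractBayes} to write the transition-density ratio as the conditional expectation $E_{P_{b_0}^{(X_0)}}[\tilde{p}_b/\tilde{p}_0\mid X_\Delta]$, and applies conditional Jensen to $h$. This produces two terms: the continuous-path second moment $E_{b_0}[(\log(\tilde{p}_0/\tilde{p}_b))^2]$, which Girsanov and It\^o's isometry bound by $2\Delta\norm{f}_{\mu_0}^2$ with $f=(b_0-b)/\sigma$ (the cross term vanishing by martingale/finite-variation orthogonality), and a remainder $E_{b_0}[(2e^{-1}\tilde{p}_b/\tilde{p}_0-1)\II\{\tilde{p}_b/\tilde{p}_0\geq e\}]$, which is bounded by the tail probability $P_b[\log(\tilde{p}_b/\tilde{p}_0)\geq 1]\leq 2e^{-\lambda w_1(\Delta)^{-2}}\leq n^{-1}$ via the martingale modulus-of-continuity result \Cref{lem:Y_holder}, and then absorbed into $\tfrac12\Delta\eps_n^2$ using exactly the hypothesis $n\Delta\eps_n^2\to\infty$. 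If you wish to keep your route, you would have to prove the uniform bridge-KL estimate yourself (say via heat-kernel gradient bounds), which is substantially more demanding than adopting the paper's convexification-plus-tail argument; otherwise your proof is incomplete at its central step.
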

\begin{proof}
Applying \Cref{lem:VarianceTensorises} in the appendix where it is shown that \[\Var_{b_0} \log\brackets*{\frac{p_0^{(n)}(X^{(n)})}{p_b^{(n)}(X^{(n)})}}
\leq 3\Var_{b_0}\brackets*{\log\frac{\pi_0(X_0)}{\pi_b(X_0)}} +3n\Var_{b_0}\brackets*{\log\frac{p_0(X_0,X_\Delta)}{p_b(X_0,X_\Delta)}}, \] and noting also that $K(p_0^{(n)},p_b^{(n)})=K(\pi_0,\pi_b)+n\KL(b_0,b)$ by linearity, we observe that $B_{\eps_n/\sqrt{3}}\subseteq B_{KL}^{(n)}(\eps_n)$. It is therefore enough to show that for some $A=A(\Ii)$ we have $\braces{b\in \Theta \st \norm{b-b_0}_2\leq A\eps_n}\subseteq B_{\eps_n/\sqrt{3}}$. This follows immediately by applying \Cref{lem:SmallBallProbs} below to $\xi_n=\eps_n/\sqrt{3}$.
\end{proof}

\begin{lemma} \label{lem:SmallBallProbs}
Under the conditions of \Cref{thm:SmallBallProbs}, there is an $A=A(\Ii)$ such that, for all $n$ sufficiently large, $\braces{b\in \Theta \st \norm{b-b_0}_2\leq A\eps_n}\subseteq B_{\eps_n}$.
\end{lemma}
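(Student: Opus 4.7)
\emph{Overview.} The lemma requires four simultaneous bounds controlling the closeness of $(X_0, X_\Delta)$ laws under $P_{b_0}$ and $P_b$. My plan is to address these in three groups: the two invariant-density terms by a direct Lipschitz estimate on $b \mapsto \pi_b$, the $\KL$ term by a data-processing argument following van der Meulen \& van Zanten, and the variance term by extending this argument to the second moment via Jensen's inequality. I will expand on the variance bound in more detail since it is the novel step.

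\emph{Invariant-density terms.} The explicit formula for $\pi_b$ given just before \Cref{assumption:invariant} exhibits $\pi_b$ as a smooth function of $I_b(x) = \int_0^x 2b/\sigma^2$. Cauchy--Schwarz gives $\|I_b - I_{b_0}\|_\infty \leq (2/\sigma_L^2) \|b - b_0\|_2$, and since all denominators appearing in the formula are uniformly bounded away from $0$ on $\Theta$, I expect $\|\pi_0 - \pi_b\|_\infty \leq C(\Ii) \|b - b_0\|_2$. Combining with the uniform lower bound $\pi_L$ and standard estimates for $x \mapsto x \log x$, both $K(\pi_0, \pi_b)$ and $\Var_{b_0}[\log(\pi_0/\pi_b)]$ are controlled by $C(\Ii) \|b - b_0\|_2^2 \leq CA^2 \eps_n^2$, small enough for $A$ chosen small.

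\emph{Expectation of the discrete log-likelihood ratio.} Stationarity gives $\KL(b_0, b) = \int_0^1 \pi_0(x) K(p_0(\Delta, x, \cdot), p_b(\Delta, x, \cdot))\,\dx$. Since $(X_0, X_\Delta)$ is a measurable functional of the continuous path on $[0, \Delta]$, the data processing inequality gives $K(p_0(\Delta, x, \cdot), p_b(\Delta, x, \cdot)) \leq K(P_{b_0}^{(x)}|_{[0, \Delta]}, P_b^{(x)}|_{[0, \Delta]})$, and Girsanov evaluates the latter as $\tfrac{1}{2} E_{b_0}^{(x)}[\int_0^\Delta (b_0 - b)^2(X_s)/\sigma^2(X_s)\,\ds]$. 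Integrating over $x \sim \pi_0$ and using invariance yields $\KL(b_0, b) \leq \tfrac{\Delta \pi_U}{2\sigma_L^2} \|b - b_0\|_2^2 \leq CA^2 \Delta \eps_n^2$.

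\emph{Variance of the discrete log-likelihood ratio.} This is the novel part. Set $\Gg = \sigma(X_0, X_\Delta)$, $L_d = \log(p_0/p_b)(\Delta, X_0, X_\Delta)$, and $L_c = \log(dP_{b_0}/dP_b)$ on the continuous path over $[0, \Delta]$. The tower property gives the key identity $e^{-L_d} = E_{b_0}[e^{-L_c} \mid \Gg]$, so with $N := E_{b_0}[L_c \mid \Gg]$ Jensen's inequality yields $L_d \leq N$, and the Jensen gap $R := N - L_d$ is nonnegative. Then
\[\Var_{b_0}(L_d) \leq 2 \Var_{b_0}(N) + 2 E_{b_0}[R^2] \leq 2 \Var_{b_0}(L_c) + 2 E_{b_0}[R^2].\]
Girsanov gives $L_c = M + \tfrac{1}{2} A$ with $M = \int_0^\Delta (b_0 - b)/\sigma\,\dW_s$ a $P_{b_0}$-martingale and $0 \leq A = \int_0^\Delta (b_0 - b)^2/\sigma^2\,\ds \leq C \Delta$; It\^o's isometry and stationarity give $\Var_{b_0}(L_c) \leq C \Delta \|b - b_0\|_2^2$. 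For $R$, a second-order Taylor expansion of $-\log E_{b_0}[e^{-L_c} \mid \Gg]$ around $L_c = 0$ yields $R \lesssim \Var_{b_0}(L_c \mid \Gg)$ plus higher-order terms, leading to $E_{b_0}[R^2] \leq C \Delta^2 \|b - b_0\|_2^2 = o(\Delta \eps_n^2)$. Together this gives $\Var_{b_0}(L_d) \leq C A^2 \Delta \eps_n^2$. The hard part will be this last estimate: making the Taylor expansion rigorous uniformly in the path requires exponential integrability of $M$ (via Burkholder--Davis--Gundy using $\qv{M}_\Delta \leq C\Delta$) combined with the a priori bound $A \leq C\Delta$, so as to control higher-order corrections at the advertised order $\Delta^2$.
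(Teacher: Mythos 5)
Your decomposition into the two invariant-density bounds, the $\KL$ bound, and the variance bound matches the paper's (its Lemmas \ref{lem:Kmu-bounded}, \ref{lem:KL-bounded}, \ref{lem:KL2-bounded}), and your treatment of the first three is essentially the paper's: a pointwise Lipschitz estimate $\abs{\pi_b-\pi_0}\leq C(\Ii)\norm{b-b_0}_2$ for the stationary densities, and data processing plus Girsanov (the van der Meulen--van Zanten argument) for $\KL(b_0,b)$. Where you genuinely diverge is the variance term, which is also the paper's novel step. Your starting identity $e^{-L_d}=E_{b_0}[e^{-L_c}\mid\Gg]$ is exactly what the paper establishes via \Cref{lem:abstractBayes} and \Cref{lem:DiffusionBridge} (it justifies the conditioning on $\{U_\Delta=y\}$ by disintegration; your version is the standard fact that conditional expectations of Radon--Nikodym derivatives give densities of restricted laws, which is fine). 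But from there the paper does not quantify the Jensen gap at all: it dominates $\log(x)^2$ by its convex envelope $h$ (quadratic below $e$, linear above), applies conditional Jensen to $h$, and is left with an error term $E_{b_0}[(2e^{-1}\tilde p_b/\tilde p_0-1)\II\{\tilde p_b/\tilde p_0\geq e\}]\leq P_b[\log(\tilde p_b/\tilde p_0)\geq 1]$, a tail probability it controls via the martingale modulus-of-continuity result \Cref{lem:Y_holder}, getting a bound of order $n^{-1}$ which is absorbed into $\tfrac12\Delta\eps_n^2$ only because $n\Delta\eps_n^2\to\infty$. You instead bound the gap $R=N-L_d$ directly by a second-order expansion with exponential-moment control. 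This is workable: one can avoid delicate conditional (bridge) estimates by using $e^{-x}\leq 1-x+\tfrac12 x^2e^{\abs{x}}$ and $\log(1+x)\leq x$ to get $R\leq\tfrac12 E[(L_c-N)^2e^{\abs{L_c-N}}\mid\Gg]$, then pushing to unconditional moments via conditional Jensen and Cauchy--Schwarz, with $E[e^{p\abs{M}}]$ bounded since $\qv{M}_\Delta\leq C\Delta$ and $A\leq C\Delta$ deterministically; BDG then gives $E[R^2]\lesssim \Delta^2\norm{b-b_0}_2^{2/p}$ for $p>1$ close to $1$, which is $o(\Delta\eps_n^2)$ since $n\Delta^3\to0$, so even a cruder power than your claimed $\Delta^2\norm{b-b_0}_2^2$ suffices. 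What your route buys is that it dispenses with the H{\"o}lder-continuity lemma at this point and (essentially) with the assumption $n\Delta\eps_n^2\to\infty$, which the paper explicitly needs here; what the paper's convex-envelope trick buys is that the only estimate required of the continuous-path likelihood ratio beyond its second moment is a one-sided tail bound, so no Taylor remainder or exponential-moment bookkeeping is needed. The one caveat is that your crux estimate on $E_{b_0}[R^2]$ is asserted rather than proved, and its justification is the real work; as sketched above it does go through, so I would call this a correct alternative argument modulo writing out that step carefully.
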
 

The key idea in proving \Cref{lem:SmallBallProbs} is to use the Kullback--Leibler divergence between the laws $P_{b_0}^{(x)},P_b^{(x)}$ of the continuous-time paths to control the Kullback--Leibler divergence between $p_b$ and $p_0$. This will help us because we can calculate the Kullback--Leibler divergence between the full paths using Girsanov's Theorem, which gives us an explicit formula for the likelihood ratios.

Let $P_{b,T}^{(x)}$ denote the law of $(X_t)_{0\leq t \leq T}$ conditional on $X_0=x$, i.e.\ the restriction of $P_b^{(x)}$ to $C([0,T])$. We write $\WW_{\sigma,T}^{(x)}$ for $P_{b,T}^{(x)}$ when $b=0$. Throughout this section we will simply write $P_b^{(x)}$ for $P_{b,\Delta}^{(x)}$ and similarly with $\WW_\sigma^{(x)}$. We have the following.

\begin{theorem}[Girsanov's Theorem]\label{thm:Girsanov}
Assume $b_0$ and $b$ lie in $\Theta$, and $\sigma$ satisfies \Cref{assumptions:sigma}. 
Then the laws $P_{b_0,T}^{(x)}$ and $P_{b,T}^{(x)}$ are mutually absolutely continuous with, for $X\sim P_{b,T}^{(x)}$, the almost sure identification 
\[\od{P_{b_0,T}^{(x)}}{P_{b,T}^{(x)}}((X_t)_{t\leq T})=\exp\sqbrackets*{\int_0^T \frac{b_0-b}{\sigma^2}(X_t)\dX_t - \frac{1}{2} \int_0^T \frac{b_0^2-b^2}{\sigma^2}(X_t)\dt}.\]
\end{theorem}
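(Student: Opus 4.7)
The plan is to reduce the statement to a standard version of Girsanov's theorem and then perform an algebraic rearrangement. Set
\[\theta(x) = \frac{b_0(x)-b(x)}{\sigma(x)}.\]
Under Assumption~\ref{assumptions:b} and Assumption~\ref{assumptions:sigma} we have $\|b_0\|_\infty,\|b\|_\infty \leq K_0$ and $\sigma \geq \sigma_L>0$, so $\theta$ is uniformly bounded by $2K_0/\sigma_L$. In particular Novikov's condition $E_b^{(x)}\exp(\tfrac12 \int_0^T \theta^2(X_t)\dt)<\infty$ holds trivially, so the exponential local martingale
\[Z_t = \exp\Bigl(\int_0^t \theta(X_s)\dW_s - \tfrac12\int_0^t \theta^2(X_s)\ds\Bigr), \qquad t\in[0,T],\]
is a genuine martingale under $P_{b,T}^{(x)}$ with $E_{b,T}^{(x)} Z_T = 1$ (cf.\ Bass \cite{Bass2011}, Chapter~16, or Karatzas--Shreve).

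Next, I would apply the classical Girsanov theorem: defining a new probability $Q$ on $C([0,T])$ by $\dif Q/\dif P_{b,T}^{(x)} = Z_T$, the process $\tilde W_t := W_t - \int_0^t \theta(X_s)\ds$ is a standard Brownian motion under $Q$. Plugging this into the SDE satisfied by $X$ under $P_{b,T}^{(x)}$ gives
\[\dX_t = b(X_t)\dt + \sigma(X_t)\brackets[\big]{\dif \tilde W_t + \theta(X_t)\dt} = b_0(X_t)\dt + \sigma(X_t)\dif \tilde W_t,\]
so under $Q$ the canonical process solves the SDE associated to drift $b_0$. By the weak uniqueness guaranteed by Assumptions~\ref{assumptions:b} and \ref{assumptions:sigma} (\cite{Bass2011} Theorem~24.3 and Proposition~25.2), $Q=P_{b_0,T}^{(x)}$. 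Mutual absolute continuity follows because $Z_T>0$ a.s., and inverting the Radon--Nikodym derivative gives the claimed formula up to rewriting the exponent.

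Finally I would perform the change of variable that expresses the $\dW$ integral as a $\dX$ integral. Using $\sigma(X_t)\dW_t = \dX_t - b(X_t)\dt$ (valid under $P_{b,T}^{(x)}$), one has
\[\int_0^T \theta(X_t)\dW_t = \int_0^T \frac{b_0-b}{\sigma^2}(X_t)\dX_t - \int_0^T \frac{(b_0-b)b}{\sigma^2}(X_t)\dt.\]
Combining with $-\tfrac12 \int_0^T \theta^2(X_t)\dt$ and using $2(b_0-b)b+(b_0-b)^2 = b_0^2-b^2$ collapses the $\dt$ integrals to the displayed $-\tfrac12 \int_0^T (b_0^2-b^2)/\sigma^2(X_t)\dt$, yielding the stated identity.

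There is no substantive obstacle here: boundedness of $b_0,b$ and of $1/\sigma$ removes any issue with the Novikov/exponential-martingale step, and the rest is bookkeeping. The only non-mechanical ingredient is invoking weak uniqueness to identify $Q$ with $P_{b_0,T}^{(x)}$, which is already guaranteed by the assumptions stated earlier in \Cref{sec:FrameworkAndAssumptions}.
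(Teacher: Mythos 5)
Your proposal is correct, but it proceeds differently from the paper: the paper's entire proof is a citation of Liptser \& Shiryaev, Theorem 7.19, together with a check that the hypotheses hold (here $b_0,b,\sigma$ are Lipschitz and bounded and $\sigma$ is bounded below), whereas you rederive the result from the classical Girsanov/Novikov machinery — boundedness of $\theta=(b_0-b)/\sigma$ gives Novikov, the exponential martingale defines $Q$, weak uniqueness (which the paper has already recorded under \Cref{assumptions:b,assumptions:sigma}) identifies $Q$ with $P_{b_0,T}^{(x)}$, and the substitution $\sigma(X_t)\dW_t=\dX_t-b(X_t)\dt$ plus the identity $2(b_0-b)b+(b_0-b)^2=b_0^2-b^2$ gives the stated exponent. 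Your algebra is right and the argument is sound; the one point your sketch passes over quickly, and which the cited theorem handles, is the passage from a change of measure on the space carrying $W$ to a Radon--Nikodym derivative between the laws on $C([0,T])$: this needs $Z_T$ to be (a.s.) a measurable functional of the path $(X_t)_{t\le T}$ alone, which is exactly what your final rewriting of the $\dW$-integral as a $\dX$-integral provides, so a sentence making that step explicit would close the argument. The trade-off is the usual one: the paper's route is shorter and delegates these measure-theoretic details to the reference, while yours is self-contained and makes visible why boundedness of the drifts and the lower bound on $\sigma$ are exactly what is needed.
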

\begin{proof}
See Liptser \& Shiryaev \cite{Liptser1977}, Theorem 7.19, noting that the assumptions are met because $b,b_0$ and $\sigma$ are all Lipschitz and bounded, and $\sigma$ is bounded away from 0.  
\end{proof}

We write \begin{equation} \label{eqn:tildep} \tilde{p}_0^{(x)}=\od{P_{b_0}^{(x)}}{\WW^{(x)}_\sigma}, \qquad \tilde{p}_b^{(x)}=\od{P_b^{(x)}}{\WW_\sigma^{(x)}}\end{equation} for the Radon-Nikodym derivatives (i.e.\ densities on $C([0,\Delta])$ with respect to $\WW_\sigma^{(x)}$) whose existence Girsanov's Theorem guarantees. 
We will simply write $X$ for $(X_t)_{t\leq \Delta}$ where context allows, and similarly with $U$. Since $\tilde{p}_0^{(x)}(X)=0$ for any path $X$ with $X_0\not = x$, we will further omit the superscripts on our densities in general, writing $\tilde{p}_0(X)$ for $\tilde{p}_0^{(X_0)}(X)$, and similarly for $\tilde{p}_b$.

\begin{proof}[Proof of \Cref{lem:SmallBallProbs}]
We break the proof into a series of lemmas. We will upper bound the variances in the definition of $B_{\eps_n}$ by the corresponding uncentred second moments. For some constant $A=A(\Ii)$ we show the following.
\begin{enumerate}[1.]
\item $A^2\KL(b_0,b)\leq \Delta\norm{b-b_0}_2^2,$ which shows that $\KL(b_0,b)\leq \Delta \eps_n^2$ whenever $\norm{b-b_0}_2\leq A\eps_n$. This is the content of \Cref{lem:KL-bounded}.
\item If $\norm{b-b_0}_2\leq A\eps_n$ then we have $E_{b_0} [\log(p_0/p_b)^2]\leq \Delta \eps_n^2.$ This is the content of \Cref{lem:KL2-bounded}. Note that the other steps do not need any assumptions on $\eps_n$, but this step uses $n\Delta\eps_n^2\to \infty$.
\item $A^2\max\braces*{K(\pi_0,\pi_b),E_{b_0}[\log(\pi_0/\pi_b)^2]}\leq \norm{b_0-b}_2^2.$ From this it follows that $K(\pi_0,\pi_b)\leq \eps_n^2$ and $E_{b_0}[\log(\pi_0/\pi_b)^2]\leq \eps_n^2$ whenever $\norm{b-b_0}_2\leq A\eps_n$. This is the content of \Cref{lem:Kmu-bounded}. 
\end{enumerate}
Together, then, the three lemmas below conclude the proof. \end{proof}

\begin{lemma}\label{lem:KL-bounded} Under the conditions of \Cref{thm:SmallBallProbs}, there is a constant $A$ depending only on $\Ii$ such that $A^2\KL(b_0,b)\leq \Delta \norm{b_0-b}_2^2$.

\end{lemma}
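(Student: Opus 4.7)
My plan is to control $\KL(b_0,b)$ by the Kullback--Leibler divergence between the continuous-time path laws $P_{b_0,\Delta}^{(x)}$ and $P_{b,\Delta}^{(x)}$, which in turn has a clean closed form via Girsanov's Theorem (\Cref{thm:Girsanov}). The key reduction is the data-processing inequality: if $Q_b^{(x)}$ denotes the marginal law of $X_\Delta$ under $P_{b,\Delta}^{(x)}$ (which has density $p_b(\Delta,x,\cdot)$), then projecting onto the marginal at time $\Delta$ cannot increase KL, so
\[ K(Q_{b_0}^{(x)},Q_b^{(x)}) = \int p_0(\Delta,x,y)\log\frac{p_0(\Delta,x,y)}{p_b(\Delta,x,y)}\dy \leq K(P_{b_0,\Delta}^{(x)},P_{b,\Delta}^{(x)}).\]
Averaging over $X_0\sim\mu_0$, I obtain $\KL(b_0,b)\leq \int \pi_0(x)K(P_{b_0,\Delta}^{(x)},P_{b,\Delta}^{(x)})\dx$.

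Next I compute $K(P_{b_0,\Delta}^{(x)},P_{b,\Delta}^{(x)}) = E_{b_0}^{(x)}\log(\tilde p_0/\tilde p_b)$ using the Girsanov expression from \Cref{thm:Girsanov}. Substituting $\dX_t = b_0(X_t)\dt+\sigma(X_t)\dW_t$ into the stochastic integral, the $\dW_t$--piece is a true martingale (its integrand $(b_0-b)/\sigma$ is bounded by $2K_0/\sigma_L$ by \Cref{assumptions:b,assumptions:sigma}) and so vanishes in expectation. A routine algebraic simplification of the remaining drift terms yields
\[ K(P_{b_0,\Delta}^{(x)},P_{b,\Delta}^{(x)}) = \frac{1}{2}\int_0^\Delta E_{b_0}^{(x)}\sqbrackets[\Big]{\frac{(b_0-b)^2}{\sigma^2}(X_t)}\dt. \]

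Finally, integrating against $\pi_0$ and invoking the invariance property $E_{b_0}[f(X_t)] = E_{b_0}[f(X_0)] = \int_0^1 f\pi_0$ (valid here since $(b_0-b)^2/\sigma^2$ is 1--periodic, as noted in \Cref{sec:FrameworkAndAssumptions} under \Cref{assumption:invariant}), and using the uniform bounds $\pi_0\leq \pi_U$ and $\sigma\geq \sigma_L$, gives
\[ \KL(b_0,b) \leq \frac{\Delta}{2}\int_0^1 \frac{(b_0-b)^2(x)}{\sigma^2(x)}\pi_0(x)\dx \leq \frac{\pi_U\Delta}{2\sigma_L^2}\norm{b_0-b}_2^2,\]
so the conclusion holds with $A^2 = 2\sigma_L^2/\pi_U$, which depends only on $\Ii$.

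There is no real obstacle: the proof is essentially an explicit calculation. The assumption $n\Delta\eps_n^2\to\infty$ from \Cref{thm:SmallBallProbs} is not required here; it will be used only for the variance bound in \Cref{lem:KL2-bounded}, where additional work is needed to handle the squared log-likelihood ratio (in particular to absorb the quadratic-variation contribution of the Itô integral, which does not vanish). The mild point to check carefully in this lemma is simply the martingale property of the stochastic integral, but this is immediate from boundedness of the integrand, which is uniform in $b\in\Theta$.
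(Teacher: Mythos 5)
Your proof is correct and follows essentially the route the paper intends (it omits the proof, pointing to van der Meulen--van Zanten and the ideas in \Cref{lem:KL2-bounded}): bound the discrete KL by the continuous-path KL and compute the latter via Girsanov, the martingale term vanishing and stationarity of $\mu_0$ giving $\tfrac{\Delta}{2}\norm{(b_0-b)/\sigma}_{\mu_0}^2\leq \tfrac{\pi_U}{2\sigma_L^2}\Delta\norm{b_0-b}_2^2$. Your appeal to the data-processing inequality is just a packaged form of the step the paper carries out explicitly in \Cref{lem:KL2-bounded} via the bridge representation (\Cref{lem:DiffusionBridge}), \Cref{lem:abstractBayes} and conditional Jensen, so the two arguments coincide in substance.
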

The proof is essentially the same as that in van der Meulen \& van Zanten \cite{vanderMeulenvanZanten2013} Lemma 5.1, with minor adjustments to fit the periodic model and non-constant $\sigma$ used here. Further, all the ideas needed are exhibited in the proof of \Cref{lem:KL2-bounded}. Thus, we omit the proof.

\begin{lemma}\label{lem:KL2-bounded} Under the conditions of \Cref{thm:SmallBallProbs}, there is a constant $A=A(\Ii)$ so that, for $n$ sufficiently large, $E_{b_0}[\log(p_0/p)^2]\leq \Delta \eps_n^2$ whenever $\norm{b-b_0}_2\leq A\eps_n$.
\end{lemma}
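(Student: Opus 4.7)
The plan is to reduce the discrete log-likelihood ratio to the continuous-path version $\log(\tilde p_0/\tilde p_b)$, which is amenable to Girsanov computations, and then to show $E_{b_0}[\log(p_0/p_b)^2]\leq C\Delta\norm{b_0-b}_2^2$ for a constant $C=C(\Ii)$; setting $A^2 = 1/C$ then yields the lemma. The starting point is the bridge identity
\[
\frac{p_0(\Delta,x,y)}{p_b(\Delta,x,y)} = E_{P_b^{(x,y)}}\sqbrackets*{\frac{\tilde p_0}{\tilde p_b}},
\]
where $P_b^{(x,y)}$ denotes the $P_b^{(x)}$-bridge from $x$ to $y$ over $[0,\Delta]$; this follows by conditioning on $X_\Delta=y$ in the defining Radon--Nikodym formulae for $\tilde p_b,\tilde p_0$. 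Jensen's inequality ($\log E\geq E\log$) applied in each of the two bridge measures yields the sandwich
\[ E_{P_b^{(X_0,X_\Delta)}}\sqbrackets*{\log(\tilde p_0/\tilde p_b)}\leq \log\tfrac{p_0}{p_b}\leq E_{P_0^{(X_0,X_\Delta)}}\sqbrackets*{\log(\tilde p_0/\tilde p_b)}, \]
so that $(\log(p_0/p_b))^2$ is at most twice the sum of the squared endpoint quantities.

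The first (``matched'') endpoint is handled cleanly: taking $E_{b_0}$, conditional Jensen and the tower property (the $P_0$-bridge is the conditional of $P_{b_0}$ given $(X_0,X_\Delta)$) give
\[ E_{b_0}\sqbrackets[\big]{(E_{P_0^{(X_0,X_\Delta)}}[\log(\tilde p_0/\tilde p_b)])^2}\leq E_{P_{b_0}}\sqbrackets*{(\log(\tilde p_0/\tilde p_b))^2}. \]
By \Cref{thm:Girsanov}, with $f:=b_0-b$ and working under $P_{b_0}$,
\[ \log(\tilde p_0/\tilde p_b) = \int_0^\Delta \frac{f}{\sigma}(X_t)\dif \tilde W_t + \tfrac12\int_0^\Delta \frac{f^2}{\sigma^2}(X_t)\dt, \]
with $\tilde W$ a $P_{b_0}$-Brownian motion. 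Applying $(a+b)^2\leq 2a^2+2b^2$, It\^o's isometry on the martingale part, periodicity of $f/\sigma$ plus the invariance $X_0\sim\mu_0$ (so that $E_{P_{b_0}}[(f/\sigma)^2(X_t)] = E_{\mu_0}[(f/\sigma)^2]\leq \pi_U\sigma_L^{-2}\norm{f}_2^2$), and the crude bound $\norm{f}_\infty\leq 2K_0$ for the drift-squared term, yields $E_{P_{b_0}}[(\log(\tilde p_0/\tilde p_b))^2]\leq C\Delta\norm{f}_2^2$ for $\Delta$ small.

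The second (``mismatched'') endpoint is the main obstacle, because the outer expectation is $P_{b_0}$ while the inner bridge is $P_b$. After conditional Jensen, the idea is to rewrite the joint endpoint density as $\pi_0(x)p_0(\Delta,x,y) = \rho(x,y)\pi_b(x)p_b(\Delta,x,y)$ with $\rho = (\pi_0/\pi_b)(p_0/p_b)$. The factor $\pi_0/\pi_b$ is uniformly bounded by $\pi_U/\pi_L$ from the explicit invariant-density formulae, so only $p_0/p_b$ remains to be controlled. On a high-probability set defined via the H\"older modulus of \Cref{lem:X_holder}, the Girsanov exponent defining $\tilde p_0/\tilde p_b$---and hence $p_0/p_b=E_{P_b^{(x,y)}}[\tilde p_0/\tilde p_b]$---is uniformly bounded, so $\rho$ is bounded there; the complementary event has exponentially small probability, and this is where the hypothesis $n\Delta\eps_n^2\to\infty$ (together with \Cref{assumption:Delta}) enters, to ensure the exceptional contribution is absorbed into the leading $\Delta\eps_n^2$ error. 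Once $\rho$ is controlled, the mismatched term reduces, up to a constant, to $E_{P_b}[(\log(\tilde p_0/\tilde p_b))^2]$, which is bounded by $C\Delta\norm{f}_2^2$ via the same Girsanov computation as in the matched case, replacing the formula under $P_{b_0}$ by
\[ \log(\tilde p_0/\tilde p_b) = \int_0^\Delta \frac{f}{\sigma}(X_t)\dif W_t - \tfrac12\int_0^\Delta \frac{f^2}{\sigma^2}(X_t)\dt \]
under $P_b$ ($W$ the corresponding $P_b$-Brownian motion). Summing the two bounds and choosing $A$ in terms of the resulting constant completes the proof.
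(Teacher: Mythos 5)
Your reduction to the continuous-path log-likelihood ratio and your treatment of the matched endpoint are correct: the bridge identity, Jensen under each bridge law, and the Girsanov/It\^o-isometry computation under $P_{b_0}$ reproduce the paper's leading term of order $\Delta\norm{b_0-b}_2^2$. The genuine gap is in the mismatched endpoint $E_{b_0}\bigl[\bigl(E_{P_b^{(X_0,X_\Delta)}}[\log(\tilde p_0/\tilde p_b)]\bigr)^2\bigr]$. Your change of joint endpoint density introduces $\rho=(\pi_0/\pi_b)(p_0/p_b)$, and you claim $p_0/p_b$ is uniformly bounded ``on a high-probability set defined via the H\"older modulus of \Cref{lem:X_holder}''. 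But $p_0/p_b(\Delta,x,y)=E_{P_b^{(x,y)}}[\tilde p_0/\tilde p_b]$ is a deterministic function of the endpoints obtained by integrating over \emph{all} bridge paths: a bound on the Girsanov exponent valid on a path event of high probability does not bound this conditional expectation, because the complementary bridge paths can carry an exponentially large integrand. To make this work you would need moment bounds for $\tilde p_0/\tilde p_b$ under the bridge laws, and \Cref{lem:X_holder} is proved only for the unconditioned laws $P_b^{(x)}$, not for bridges. The same issue recurs when you ``absorb the exceptional contribution'': on the bad set you still need a pointwise or $P_{b_0}$-moment bound on $E_{P_b^{(x,y)}}[(\log(\tilde p_0/\tilde p_b))^2]$, which is again a mismatched-measure quantity; a crude Cauchy--Schwarz at this point yields a bound of order $\Delta\norm{f}_2\norm{f}_\infty$ rather than $\Delta\norm{f}_2^2$, which is not $O(\Delta\eps_n^2)$ when $\norm{f}_2\asymp\eps_n$.

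This is precisely the difficulty the paper's argument is engineered to avoid. There, $\log^2$ is replaced by its convex envelope $h$, the conditioning is taken with respect to the driftless measure $\WW_\sigma$ (\Cref{lem:DiffusionBridge}), and \Cref{lem:abstractBayes} converts the ratio of $\WW_\sigma$-conditional expectations into a single conditional expectation of $\tilde p_b/\tilde p_0$ under $P_{b_0}$, so that after conditional Jensen only unconditioned $P_{b_0}$-expectations remain. The price is the excess term $E_{b_0}[(2e^{-1}(\tilde p_b/\tilde p_0)-1)\II\{\tilde p_b/\tilde p_0\geq e\}]$, but because this term is \emph{linear} in $\tilde p_b/\tilde p_0$, a change of measure turns it into the plain probability $P_b[\log(\tilde p_b/\tilde p_0)\geq 1]$, which \Cref{lem:Y_holder} (applied to the martingale $\int_0^{\cdot} -f(X_s)\dif W_s$ under the unconditioned law) bounds by $2e^{-\lambda w_1(\Delta)^{-2}}\leq n^{-1}\leq \tfrac12\Delta\eps_n^2$, using $n\Delta\eps_n^2\to\infty$ together with \Cref{assumption:Delta}. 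If you wish to keep your sandwich decomposition, you must either prove bridge-level analogues of the H\"older and moment estimates, or find a linearisation that allows a change of measure \emph{before} squaring; as written, the mismatched term is not controlled.
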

\begin{proof}
We first show that we can control the second moment of $\log(p_0/p_b)$ by the second moment of the corresponding expression $\log(\tilde{p}_0/\tilde{p}_b)$ for the full paths, up to an approximation error which is small when $\Delta$ is small.
Consider the smallest convex function dominating $\log(x)^2$, given by 
\[ h(x)=\begin{cases} \log(x)^2 & x<e\\ 2e^{-1}x-1 & x\geq e
\end{cases}\]
(it is in fact more convenient, and equivalent, to think of $h$ as dominating the function $x\mapsto (\log x^{-1})^2$).
Let $X\sim P_{b_0}^{(x)}$ and let $U\sim \WW_{\sigma}^{(x)}$. Intuitively, the probability density of a transition of $X$ from $x$ to $y$, with respect to the (Lebesgue) density $p_*$ of transitions of $U$ from $x$ to $y$, can be calculated by integrating the likelihood $\tilde{p}_0(U)$ over all paths of $U$ which start at $x$ and end at $y$, and performing this integration will yield the conditional expectation of $\tilde{p}_0^{(x)}(U)$ given $U_\Delta$. That is to say,
\begin{equation}\label{eqn:transitionDensities} \frac{p_0(\Delta,x,y)}{p_*(\Delta,x,y)}=E_{\WW_\sigma^{(x)}}\sqbrackets*{\tilde{p}_0(U) \mid U_\Delta =y}.\end{equation}
The above argument is not rigorous because we condition on an event of probability zero, but the formula \Cref{eqn:transitionDensities} is true, and is carefully justified in \Cref{lem:DiffusionBridge} in \Cref{sec:TechnicalLemmas}.
A corresponding expression holds for $p_b(\Delta,x,y)$, so that
\[ 
E_{b_0} \sqbrackets*{\log\brackets[\Big]{\frac{p_0(\Delta,X_0,X_\Delta)}{p_b(\Delta,X_0,X_\Delta)}}^2}\leq E_{b_0}[h(p_b/p_0)]=E_{b_0}\sqbrackets*{h \brackets[\bigg]{\frac{E_{\WW_\sigma^{(X_0)}}[\tilde{p}_b(U) \mid U_\Delta=X_\Delta]}{E_{\WW_\sigma^{(X_0)}
}[\tilde{p}_0(U) \mid U_\Delta=X_\Delta]}}}.
\]
\Cref{lem:abstractBayes} in \Cref{sec:TechnicalLemmas} allows us to simplify the ratio of conditional expectations. We apply with $\PP=\WW_\sigma^{(X_0)}$, $\QQ=P_{b_0}^{(X_0)}$ and $g=\tilde{p}_b^{(X_0)}/\tilde{p}_0^{(X_0)}$, then further apply conditional Jensen's inequality and the tower law to find
\begin{align*}
E_{b_0} \sqbrackets*{\brackets[\Big]{\log\frac{p_0}{p_b}}^2} 
&\leq  E_{b_0} \sqbrackets*{h\brackets[\Big]{E_{P_{b_0}^{(X_0)}}\sqbrackets[\Big]{\frac{\tilde{p}_b} {\tilde{p}_0 }( X
) \mid X_\Delta}}}\leq E_{b_0} \sqbrackets[\Big]{h\brackets[\Big]{\frac{\tilde{p}_b}{\tilde{p}_0}(X)}} \\
& \qquad \leq E_{b_0}\sqbrackets*{\brackets*{\log\frac{\tilde{p}_0}{\tilde{p}_b}(X)}^2 }+E_{b_0}\sqbrackets*{(2e^{-1}\frac{\tilde{p}_b}{\tilde{p}_0}(X)-1)\II\braces[\Big]{\frac{\tilde{p}_b}{\tilde{p}_0}(X)\geq e}},\end{align*} 
which is the promised decomposition into a corresponding quantity for the continuous case and an approximation error.
We conclude by showing that each of these two terms is bounded by $\frac{1}{2}\Delta\eps_n^2$, provided $\norm{b-b_0}_2\leq A\eps_n$ for some sufficiently small constant $A=A(\Ii)$.
\begin{paragraph}{Showing $E_{b_0}\sqbrackets*{\brackets*{\log\frac{\tilde{p}_0}{\tilde{p}_b}}^2}\leq \frac{1}{2}\Delta\eps_n^2$:}

Write $f=\frac{b_0-b}{\sigma}$. Then we apply Girsanov's Theorem (\Cref{thm:Girsanov}) to find
\begin{align*} E_{b_0}\sqbrackets*{\brackets*{\log\frac{\tilde{p}_0}{\tilde{p}_b}(X)}^2} 
&= E_{b_0} \sqbrackets*{\brackets[\Big]{ 
\int_0^\Delta f(X_t)\dW_t+\frac{1}{2}\int_0^\Delta  f^2(X_t) \dt}^2 }, \\
&=E_{b_0}\sqbrackets[\Big]{\brackets[\Big]{\int_0^\Delta f(X_t)\dW_t}^2} +\frac{1}{4}E_{b_0}\sqbrackets[\Big]{\brackets[\big]{\int_0^\Delta  f^2(X_t) \dt}^2 } \end{align*}
The cross term has vanished in the final expression because $\int_0^\Delta f(X_t)\dW_t$ is a martingale for $X\sim P_{b_0}$ (since $f$ is bounded thanks to \Cref{assumptions:b,assumptions:sigma} and a bounded semimartingale integrated against a square integrable martingale yields a martingale, as in \cite{Rogers2000} IV.27.4), while $\int_0^\Delta f^2(X_t)\dt$ is a finite variation process, and the expectation of a martingale against a finite variation process is zero (eg.\ see \cite{Rogers2000} IV.32.12). 

For the first term on the right, we use It{\^o}'s isometry (\cite{Rogers2000} IV.27.5), Fubini's Theorem, periodicity of $f$ and stationarity of $\mu_0$ for the periodised process $\dot{X}=X\mod 1$ to find 
\[E_{b_0}\brackets[\Big]{\int_0^\Delta f(X_t)\dW_t}^2=E_{b_0}\int_0^\Delta f^2(X_t)\dt=\int_0^\Delta E_{b_0} f^2(\dot{X}_t) \dt = \Delta \norm{f}_{\mu_0}^2.\]
 
The second term $\frac{1}{4}E_{b_0}\sqbrackets[\Big]{\brackets[\big]{\int_0^\Delta  f^2(X_t) \dt}^2}$  is upper bounded by $\frac{1}{4} \Delta^2 \norm{f}_\infty^2\norm{f}_{\mu_0}^2$ (this can be seen from the bound $(\int_0^\Delta f^2)^2\leq \Delta\norm{f}_\infty^2 \int_0^\Delta f^2$), hence is dominated by $\Delta\norm{f}_{\mu_0}^2$ when $n$ is large. Thus, for some constant $A=A(\Ii)$ we find
\[E_{b_0}\sqbrackets*{\brackets*{\log\frac{\tilde{p}_0}{\tilde{p}_b}(X)}^2}\leq  2 \Delta\norm{f}_{\mu_0}^2\leq \frac{1}{2} A^{-2} \Delta\norm{b_0-b}_2^2,\] where \Cref{assumptions:b,assumptions:sigma} allow us to upper bound $\norm{f}_{\mu_0}$ by $\norm{b_0-b}_2$, up to a constant depending only on $\Ii$. For $\norm{b_0-b}_2\leq A\eps_n$ we then have $E_{b_0}\sqbrackets[\big]{\brackets[\big]{\log(\tilde{p}_b/\tilde{p}_0)}^2 }\leq \Delta\eps_n^2/2.$
	
\end{paragraph}
\begin{paragraph}{Showing $E_{b_0} \sqbrackets*{(2e^{-1}\frac{\tilde{p}_b}{\tilde{p}_0}(X)-1)\II\braces{\frac{\tilde{p}_b}{\tilde{p}_0}(X)\geq e}}\leq \frac{1}{2}\Delta\eps_n^2$:}

We have 
\[E_{b_0} \sqbrackets[\Big]{\brackets[\Big]{2e^{-1}\frac{\tilde{p}_b}{\tilde{p}_0}(X)-1}\II\braces[\Big]{\frac{\tilde{p}_b}{\tilde{p}_0}(X)\geq e}} \leq 2e^{-1}P_b\sqbrackets[\Big]{\frac{\tilde{p}_b}{\tilde{p}_0} \geq e} \leq P_b\sqbrackets[\Big]{\log\brackets[\Big]{\frac{\tilde{p}_b}{\tilde{p}_0}(X)}\geq 1}.\]
By the tower law it suffices to show $P_b^{(x)}\sqbrackets[\Big]{\log\brackets[\Big]{\frac{\tilde{p}_b}{\tilde{p}_0}(X)}\geq 1}\leq \frac{1}{2}\Delta\eps_n^2$ for each $x\in[0,1]$.
Applying Girsanov's Theorem (\Cref{thm:Girsanov}) we have, for $f=(b_0-b)/\sigma$, and for $n$ large enough that $\Delta\norm{f}_\infty^2\leq 1$,
\begin{align*}
P_{b}^{(x)}\brackets[\Big]{\log\frac{\tilde{p}_b}{\tilde{p}_0}(X)>1}
&=P_{b}^{(x)}\brackets[\Big]{\int_0^\Delta -f(X_t)\dW_t+\frac{1}{2}\int_0^\Delta f(X_t)^2\dt >1}\\
&\leq P_{b}^{(x)}\brackets[\Big]{\int_0^\Delta -f(X_t)\dW_t> 1/2 }.
\end{align*}
Write $M_t=\int_0^t -f(X_s) \dW_s$. Then, for $A=\max(1,(2K_0/\sigma_L)^2)$, since $A$ uniformly upper bounds $\norm{f}_\infty^2$ for $b\in \Theta$, we see that $M$ is a martingale whose quadratic variation satisfies $\abs{\qv{M}_t-\qv{M}_s}\leq A\abs{t-s}$. Recalling that $w_1(\delta)=\delta^{1/2}\log(\delta^{-1})^{1/2}$, we apply \Cref{lem:Y_holder} with $u=w_1(\Delta)^{-1}/2$ to yield that, for $n$ large enough, 
\begin{align*}P_{b}^{(x)}\brackets[\Big]{\log\frac{\tilde{p}_b}{\tilde{p}_0}(X)>1}&\leq P_{b}^{(x)}\brackets[\Big]{\sup_{s,t\leq \Delta, s\not = t} \frac{\abs{M_t-M_s}}{w_1(\abs{t-s})} >\frac{1}{2}w_1(\Delta)^{-1} }\\
&\leq 2\exp\brackets[\Big]{-\lambda  w_1(\Delta)^{-2}},
 \end{align*}
where $\lambda$ is a constant depending only on $\Ii$.

Recall we assume $n\Delta\to \infty$ and $n\Delta^2 \to 0$. It follows that for large enough $n$ we have $\log(\Delta^{-1})\leq \log(n)$, and $\Delta\leq \lambda \log(n)^{-2}$.
Then observe
\begin{align*}
&\Delta\leq \lambda \log(n)^{-2} \implies \Delta \leq \lambda (\log \Delta^{-1})^{-1}\log(n)^{-1} \implies \log(n)\leq\lambda \Delta^{-1}(\log \Delta^{-1})^{-1},
\end{align*}
so that $\exp\brackets[\big]{-\lambda  w_1(\Delta)^{-2}}\leq n^{-1}$ for $n$ large. 
Finally, since $n\Delta\eps_n^2\to \infty$, we see $2n^{-1}\leq \frac{1}{2}\Delta\eps_n^2$ for $n$ large enough, as required.
\end{paragraph}
\end{proof}

\begin{lemma}\label{lem:Kmu-bounded}  Under the conditions of \Cref{thm:SmallBallProbs}, there is a constant $A$ depending only on $\Ii$ such that $A^2\max\braces*{K(\pi_0,\pi_b),E_{b_0}[\log(\pi_0/\pi_b)^2]}\leq \norm{b_0-b}_2^2.$\end{lemma}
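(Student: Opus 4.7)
The plan is to reduce both quantities to the $L^2$-closeness $\|\pi_0-\pi_b\|_2$ of the invariant densities, and then to obtain this closeness from the drift distance via the explicit formula for $\pi_b$ given in \Cref{sec:FrameworkAndAssumptions}. Throughout, uniform upper and lower bounds on $\pi_b$ (by $\pi_U, \pi_L$) and on $I_b$ (by $2K_0/\sigma_L^2$) on $\Theta$ will be used freely.

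Step 1 (from density closeness to K-L quantities). Since $\pi_0/\pi_b$ takes values in the bounded interval $[\pi_L/\pi_U,\pi_U/\pi_L]$, the function $\phi(r)=r\log r-r+1$ satisfies $\phi(r)\leq c(\Ii)(r-1)^2$ on that interval. Using $\int(\pi_0-\pi_b)=0$ we get
\[K(\pi_0,\pi_b)=\int \pi_b\,\phi(\pi_0/\pi_b)\,\dx\leq c(\Ii)\int \frac{(\pi_0-\pi_b)^2}{\pi_b}\,\dx\leq \frac{c(\Ii)}{\pi_L}\|\pi_0-\pi_b\|_2^2.\]
Similarly, since $\log$ is Lipschitz on $[\pi_L,\pi_U]$ with constant $1/\pi_L$,
\[E_{b_0}\bigl[(\log(\pi_0/\pi_b))^2\bigr]=\int\pi_0(\log\pi_0-\log\pi_b)^2\dx\leq \frac{\pi_U}{\pi_L^{2}}\|\pi_0-\pi_b\|_2^2.\]
Thus it suffices to exhibit $C=C(\Ii)$ with $\|\pi_0-\pi_b\|_2\leq C\|b_0-b\|_2$, and in fact I will show the stronger $L^\infty$ bound.

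Step 2 (from drift closeness to density closeness). Write $\pi_b(x)=e^{I_b(x)}J_b(x)/(H_b\sigma^2(x))$, where $J_b(x)=e^{I_b(1)}\int_x^1 e^{-I_b(y)}\dy+\int_0^x e^{-I_b(y)}\dy$. From $I_b(x)-I_0(x)=\int_0^x 2(b-b_0)/\sigma^2\,\dy$ and Cauchy--Schwarz on $[0,1]$,
\[\|I_0-I_b\|_\infty\leq \tfrac{2}{\sigma_L^{2}}\|b_0-b\|_1\leq \tfrac{2}{\sigma_L^{2}}\|b_0-b\|_2.\]
Since $\exp$ is Lipschitz on the bounded range of $I_b$, one gets $\|e^{I_0}-e^{I_b}\|_\infty\leq C(\Ii)\|b_0-b\|_2$, and integrating this bound through the definition of $J_b$ (and using that $I_b(1)-I_0(1)$ satisfies the same estimate) gives $\|J_0-J_b\|_\infty\leq C(\Ii)\|b_0-b\|_2$. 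Finally $|H_0-H_b|\leq \int_0^1|f_0-f_b|\dx$ where $f_b(x)=e^{I_b(x)}J_b(x)/\sigma^2(x)$, and combining the two previous Lipschitz estimates with the triangle inequality on a product of uniformly bounded and uniformly positive factors yields $\|f_0-f_b\|_\infty\leq C(\Ii)\|b_0-b\|_2$, hence $|H_0-H_b|\leq C(\Ii)\|b_0-b\|_2$. Assembling these bounds into $\pi_b=f_b/H_b$ (both numerator and denominator being uniformly bounded away from $0$ and $\infty$) gives
\[\|\pi_0-\pi_b\|_\infty\leq C(\Ii)\|b_0-b\|_2,\]
which is the required bound since $\|\cdot\|_2\leq \|\cdot\|_\infty$ on $[0,1]$.

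There is no serious obstacle: the proof is essentially mechanical once one has the explicit formula for $\pi_b$ and the a priori bounds $\pi_L,\pi_U$, which were already established when \Cref{assumption:invariant} was stated. The only mild subtlety is the conversion in Step 1 from a quadratic density-distance to a K-L distance, which uses that $\pi_b$ is uniformly bounded away from zero; this is precisely why the quantitative lower bound $\pi_L>0$ was built into the framework.
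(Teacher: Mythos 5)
Your proof is correct, and its main technical content (Step 2) is the same as the paper's: both exploit the explicit formula for $\pi_b$, the uniform bounds $e^{\pm I_b}\in[e^{-2K_0/\sigma_L^2},e^{2K_0/\sigma_L^2}]$ and $H_b$ bounded away from $0$ and $\infty$, and Lipschitz/mean-value estimates to get $\abs{\pi_0(x)-\pi_b(x)}\leq C(\Ii)\norm{b_0-b}_2$ pointwise, hence in $L^2$ (the paper organises this as a four-term decomposition $D_1+\dots+D_4$, which is what your ``assembling'' of the bounds on $e^{I_b}$, $J_b$, $H_b$ amounts to). Where you differ is the reduction step: the paper invokes the comment after Lemma 8.3 of Ghosal--Ghosh--van der Vaart, which says it suffices to bound $h^2(\pi_0,\pi_b)\norm{\pi_0/\pi_b}_\infty$, absorbs $\norm{\pi_0/\pi_b}_\infty$ into the constant via $\pi_L,\pi_U$, and then uses $h^2(\pi_0,\pi_b)\leq \tfrac{1}{4\pi_L}\norm{\pi_0-\pi_b}_2^2$; you instead bound $K(\pi_0,\pi_b)$ and $E_{b_0}[\log(\pi_0/\pi_b)^2]$ directly by $\norm{\pi_0-\pi_b}_2^2$, using the quadratic domination $\phi(r)=r\log r-r+1\leq c(\Ii)(r-1)^2$ on the bounded range of $\pi_0/\pi_b$ and the Lipschitz property of $\log$ on $[\pi_L,\pi_U]$. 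Both reductions are valid here precisely because $\pi_L,\pi_U$ are uniform over $\Theta$; yours is self-contained and avoids the external citation, while the paper's is shorter given the known Hellinger-to-KL lemma and fits the standard testing-framework toolkit. One cosmetic point: your appeal to $\int(\pi_0-\pi_b)=0$ is only used to rewrite $K(\pi_0,\pi_b)=\int\pi_b\,\phi(\pi_0/\pi_b)$, which is fine, but worth stating explicitly so the identity does not look like an unjustified step.
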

\begin{proof}

By the comment after Lemma 8.3 in \cite{GGV2000}, it suffices to prove that $h^2(\pi_0,\pi_b)\norm{\pi_0/\pi_b}_\infty\leq C\norm{b-b_0}_2^2$ for some $C=C(\Ii)$, where $h$ is the Hellinger distance between densities defined by $h^2(p,q)=\int (\sqrt{p}-\sqrt{q})^2$. Since $\pi_0,\pi_b$ are uniformly bounded above and away from zero, we can absorb the term $\norm{\pi_0/\pi_b}_\infty$ into the constant. 

We initially prove pointwise bounds on the difference between the densities $\pi_0,\pi_b$.
Recall we saw in \Cref{sec:FrameworkAndAssumptions} that, for $I_b(x)=\int_0^x \frac{2b}{\sigma^2}(y)\dy$, we have
\begin{align*} &\pi_b(x) =\frac{e^{I_b(x)}}{H_b\sigma^2(x)}\brackets[\Big]{e^{I_b(1)}\int_x^1 e^{-I_b(y)}\dy +\int_0^x e^{-I_b(y)}\dy}, \qquad x\in [0,1], \\ & H_b=\int_0^1 \frac{e^{I_b(x)}}{\sigma^2(x)}\brackets[\Big]{e^{I_b(1)}\int_x^1 e^{-I_b(y)}\dy +\int_0^x e^{-I_b(y)}\dy}\dx. \end{align*}

We can decompose:
$ \abs{\pi_b(x)-\pi_0(x)}\leq D_1+D_2+D_3+D_4,$
where \begin{align*}
&D_1=\frac{e^{I_b(x)}}{\sigma^2(x)} \abs[\Big]{\frac{1}{H_b}-\frac{1}{H_{b_0}}}  \brackets[\Big]{e^{I_b(1)}\int_x^1 e^{-I_b(y)}\dy+\int_0^x e^{-I_b(y)}\dy}, \\
&D_2= \frac{\abs{e^{I_b(x)}-e^{I_{b_0}(x)}}}{H_{b_0}\sigma^2(x)} \brackets[\Big]{e^{I_b(1)}\int_x^1 e^{-I_b(y)}\dy+\int_0^x e^{-I_b(y)}\dy}, \\
&D_3=\frac{e^{I_{b_0}(x)}}{H_{b_0}\sigma^2(x)}\abs[\Big]{\brackets[\big]{e^{I_b(1)}-e^{I_{b_0}(1)}}\int_x^1 e^{-I_b(y)}\dy },\\
&D_4=\frac{e^{I_{b_0}(x)}}{H_{b_0}\sigma^2(x)}\abs[\Bigg]{e^{I_{b_0}(1)}\int_x^1 \brackets{e^{-I_b(y)}-e^{-I_{b_0}(y)}}\dy +\int_0^x \brackets{e^{-I_b(y)}-e^{-I_{b_0}(y)}}\dy}.
\end{align*}
We have the bounds $
 \sigma_U^{-2}e^{-6K_0\sigma_L^{-2}}\leq H_b\leq \sigma_L^{-2}e^{6K_0\sigma_L^{-2}},$ and $ e^{-2K_0\sigma_L^{-2}}\leq e^{I_b(x)} \leq e^{2K_0\sigma_L^{-2}}. $ 
An application of the mean value theorem then tells us \[
\abs[\Big]{e^{I_b(x)}-e^{I_{b_0}(x)}}\leq C(\Ii) \int_0^x \frac{2\abs{b_0-b}}{\sigma^2}(y)\dy
\leq C'(\Ii)\norm{b_0-b}_2,
\] for some constants $C$, $C'\!$, and the same expression upper bounds $\abs{e^{-I_b(x)}-e^{-I_{b_0}(x)}}$.

It follows that, for some constant $C=C(\Ii)$, we have $D_i\leq C \norm{b-b_0}_2$ for $i=2,3,4$. For $i=1$ the same bound holds since $\abs{\frac{1}{H_b}-\frac{1}{H_{b_0}}}\leq \frac{\abs{H_b-H_{b_0}}}{H_b H_{b_0}}$ and a similar decomposition to the above yields $\abs{H_b-H_{b_0}}\leq C(\Ii)\norm{b-b_0}_2$. 

Thus, we have shown that $\abs{\pi_b(x)-\pi_0(x)}\leq C(\Ii)\norm{b-b_0}_2$. Integrating this pointwise bound, we find that $\norm{\pi_0-\pi_b}_2\leq C(\Ii)\norm{b_0-b}_2$. 
Finally, since $h^2(\pi_0,\pi_b)\leq \frac{1}{4\pi_L}\norm{\pi_0-\pi_b}_2^2\leq C'(\Ii)\norm{b_0-b}_2^2,$ for some different constant $C'$, we are done. 
\end{proof}

\section{Main contraction results: proofs} \label{sec:MainContractionResultsProof}
We now have the tools we need to apply general theory in order to derive contraction rates.
Recall that $K(p,q)$ denotes the Kullback--Leibler divergence between probability distributions with densities $p$ and $q$, and recall the definition
\[B_{KL}^{(n)}(\eps)=\braces*{b\in \Theta \st K(p_0^{(n)},p_b^{(n)})\leq (n\Delta+1)\eps^2, \Var_{b_0}\brackets[\Big]{\log\frac{p_0^{(n)}}{p_b^{(n)}}}\leq (n\Delta+1)\eps^2}.\]

We have the following abstract contraction result, from which we deduce \Cref{thm:ConcretePriorContraction}.
\begin{theorem}\label{thm:AbstractContractionResult}
Consider data $X^{(n)}=(X_{k\Delta})_{0\leq k \leq n}$ sampled from a solution $X$ to \Cref{eqn:sde} under \Cref{assumptions:b,assumptions:sigma,assumption:Delta,assumption:invariant}. Let the true parameter be $b_0$.
Let $\eps_n\to 0$ be a sequence of positive numbers and let $l_n$ be a sequence of positive integers such that, for some constant $L$ we have, for all $n$,
\begin{equation}\label{eqn:dn-kn-epsn} D_{l_n}=2^{l_n} \leq L n\Delta \eps_n^2, \quad \text{and} \quad n\Delta\eps_n^2/\log(n\Delta) \to\infty. \end{equation}
For each $n$ let $\Theta_n$ be $\Ss$-measurable and assume \begin{equation}\label{eqn:ThetaN} b_0\in \Theta_n\subseteq\braces{b\in \Theta : \norm{\pi_{l_n}b-b}_2 \leq \eps_n},\end{equation} where $\pi_{l_n}$ is the $L^2$--orthogonal projection onto $S_{l_n}$ as described in \Cref{sec:ApproxSpaces}. Let $\Pi^{(n)}$ be a sequence of priors on $\Theta$ satisfying
\begin{enumerate}[(a)]
\item \label{abstractpriorbias} $\Pi^{(n)}(\Theta_n^c)\leq e^{-(\omega+4)n\Delta\eps_n^2}$,
\item\label{abstractpriorsmallball} $\Pi^{(n)}(B_{KL}^{(n)}(\eps_n))\geq e^{-\omega n\Delta \eps_n^2}$, 
\end{enumerate}
for some constant\footnote{In fact we can replace the exponent $\omega+4$ in \Cref{abstractpriorbias} with any $B>\omega+1$. We choose $\omega+4$ because it simplifies the exposition and the exact value is unimportant.} $\omega>0$. 
Then $\Pi^{(n)}\brackets*{\braces{b\in \Theta : \norm{b-b_0}_2 \leq M\eps_n} \mid X^{(n)}}\to 1$ in probability under the law $P_{b_0}$ of $X$, for some constant $M=M(\Ii,L_0,\omega,L)$.
\end{theorem}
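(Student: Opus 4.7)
The plan is to apply the standard testing framework of Ghosal--Ghosh--van der Vaart \cite{GGV2000}, combining the tests from \Cref{lem:ExistenceOfTests} with the small-ball hypothesis \eqref{abstractpriorsmallball} via a standard evidence lower bound. Write $U_n=\{b\in\Theta:\norm{b-b_0}_2>M\eps_n\}$, and decompose the posterior as $\Pi^{(n)}(U_n\mid X^{(n)})=N_n/D_n$, where $D_n=\int_\Theta p_b^{(n)}/p_0^{(n)}\dif\Pi^{(n)}(b)$ is the evidence and $N_n$ is the analogous integral restricted to $U_n$.

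First I would establish a lower bound on the evidence: applying Chebyshev's inequality to $\log(p_0^{(n)}/p_b^{(n)})$ for $b$ in $B_{KL}^{(n)}(\eps_n)$, using the variance control built into the definition of $B_{KL}^{(n)}$, together with Jensen's inequality applied to the prior integral restricted to $B_{KL}^{(n)}(\eps_n)$, yields the standard bound
\[ P_{b_0}\brackets*{D_n \geq \Pi^{(n)}(B_{KL}^{(n)}(\eps_n))\, e^{-2(n\Delta+1)\eps_n^2}} \to 1 \]
as $n\to\infty$, since $n\Delta\eps_n^2\to\infty$. Combined with hypothesis \eqref{abstractpriorsmallball}, this gives $P_{b_0}(D_n\geq e^{-(\omega+3)n\Delta\eps_n^2})\to 1$ for large $n$.

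Next I would invoke \Cref{lem:ExistenceOfTests} with $D=\omega+5$, applied with $\Theta_n$ and $l_n$ as given in the theorem (hypotheses \eqref{eqn:dn-kn-epsn} and \eqref{eqn:ThetaN} are precisely the conditions of that lemma), to obtain a constant $M=M(\Ii,L_0,\omega,L)$ and tests $\psi_n$ with $E_{b_0}\psi_n \leq e^{-(\omega+5)n\Delta\eps_n^2}$ and with Type II error bounded by $e^{-(\omega+5)n\Delta\eps_n^2}$ uniformly over $b\in \Theta_n\cap U_n$. Splitting $U_n = (U_n\cap\Theta_n)\cup(U_n\cap\Theta_n^c)$ and applying Fubini gives
\[ E_{b_0}\sqbrackets*{ N_n(1-\psi_n)} \leq \int_{U_n\cap\Theta_n} E_b(1-\psi_n)\,\dif\Pi^{(n)}(b) + \Pi^{(n)}(\Theta_n^c) \leq e^{-(\omega+5)n\Delta\eps_n^2}+e^{-(\omega+4)n\Delta\eps_n^2}, \]
using hypothesis \eqref{abstractpriorbias} for the second term.

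To conclude, Markov's inequality shows $P_{b_0}(N_n(1-\psi_n)\geq e^{-(\omega+3.5)n\Delta\eps_n^2})\to 0$; combined with the evidence lower bound this yields $\Pi^{(n)}(U_n\mid X^{(n)})(1-\psi_n)\to 0$ in $P_{b_0}$-probability, and adding $\psi_n\to 0$ in $P_{b_0}$-probability (by Markov and the Type I bound) gives the claim. There is no real obstacle here: all the substantive work has been done in constructing the tests (\Cref{sec:Concentration}) and controlling the Kullback--Leibler neighbourhoods (\Cref{sec:SmallBallProbs}); the only care required is to choose the test exponent $D=\omega+5$ so that it strictly dominates both the evidence exponent $\omega+3$ and the sieve exponent $\omega+4$, ensuring the ratio in the posterior vanishes.
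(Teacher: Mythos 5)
Your proposal is correct and follows essentially the same route as the paper: the standard Ghosal--Ghosh--van der Vaart scheme, combining the tests of \Cref{lem:ExistenceOfTests} (with the test exponent chosen to dominate the evidence and sieve exponents) with an evidence lower bound proved by Jensen, Fubini and Chebyshev from the variance control in $B_{KL}^{(n)}(\eps_n)$, and handling $\Theta_n^c$ via Fubini, the change of measure $E_{b_0}[(p_b^{(n)}/p_0^{(n)})(1-\psi_n)]=E_b[1-\psi_n]$ and hypothesis \Cref{abstractpriorbias}. The only difference is bookkeeping — you bound the numerator and denominator of the posterior ratio separately and conclude by Markov's inequality, whereas the paper bounds expectations of the relevant posterior masses on the high-evidence event — which is immaterial.
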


The proof, given the existence of tests, follows the standard format of Ghosal--Ghosh--van der Vaart \cite{GGV2000}. A main step in the proof of \Cref{thm:AbstractContractionResult} is to demonstrate an evidence lower bound. 
\begin{lemma}[(Evidence lower bound, ELBO)] \label{lem:ELBO} Recall we defined \[B_{KL}^{(n)}(\eps)=\braces*{b\in \Theta \st K(p_0^{(n)},p_b^{(n)})\leq (n\Delta+1)\eps^2, \Var_{b_0}\brackets[\Big]{\log\frac{p_0^{(n)}}{p_b^{(n)}}}\leq (n\Delta+1)\eps^2},\] where $p_b^{(n)}$ is the joint probability density of $X_0,\dots, X_{n\Delta}$ started from the invariant distribution when $b$ is the true parameter and $p_0^{(n)}$ denotes $p_{b_0}^{(n)}$. Let $n\Delta\eps_n^2\to \infty$ and write $B_{KL}^{(n)}$ for $B_{KL}^{(n)}(\eps_n)$. Define the event \[A_n=\braces[\Big]{\int_\Theta (p_b^{(n)}/p_0^{(n)}) \dPi(b)\geq \Pi(B_{KL}^{(n)})e^{-2n\Delta\eps_n^2}}.\]
	Then as $n\to \infty$, $P_{b_0}\brackets*{A_n^c}\to 0.$
\end{lemma}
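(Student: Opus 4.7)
The plan is to execute the standard Ghosal--Ghosh--van der Vaart evidence lower bound argument, taking advantage of the fact that the definition of $B_{KL}^{(n)}$ already packages up both a Kullback--Leibler and a variance bound for the full log-likelihood ratio $\log(p_0^{(n)}/p_b^{(n)})$ (rather than just for a single transition). First I would restrict the integral to $B_{KL}^{(n)}$ and renormalise $\Pi$ there to obtain a probability measure $\tilde\Pi$ on $B_{KL}^{(n)}$ (if $\Pi(B_{KL}^{(n)})=0$ the bound is trivial). Convexity of the exponential (Jensen's inequality) then yields
\[\int_\Theta \frac{p_b^{(n)}(X^{(n)})}{p_0^{(n)}(X^{(n)})}\dPi(b) \geq \Pi(B_{KL}^{(n)})\exp\brackets*{-\int_{B_{KL}^{(n)}} \log\frac{p_0^{(n)}(X^{(n)})}{p_b^{(n)}(X^{(n)})}\dif\tilde\Pi(b)}.\]
This reduces the problem to showing that $W := \int_{B_{KL}^{(n)}} \log(p_0^{(n)}/p_b^{(n)})\dif\tilde\Pi(b)$ satisfies $W \leq 2n\Delta\eps_n^2$ with $P_{b_0}$-probability tending to $1$.

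Next, a Fubini argument and the first inequality in the definition of $B_{KL}^{(n)}$ bound the mean: $E_{b_0}[W] \leq (n\Delta+1)\eps_n^2$. Jensen's inequality applied to $x\mapsto x^2$ (equivalently the inequality $\Var(\int Z_b\dif\tilde\Pi) \leq \int \Var(Z_b)\dif\tilde\Pi$ for an integral of random variables against a probability measure) together with the second inequality in the definition of $B_{KL}^{(n)}$ bounds the variance: $\Var_{b_0}(W) \leq (n\Delta+1)\eps_n^2$.

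Finally, since $n\Delta \to \infty$ we have $(n\Delta+1)\eps_n^2 \leq \tfrac{3}{2}n\Delta\eps_n^2$ for large $n$, and Chebyshev's inequality gives
\[P_{b_0}\brackets*{W > 2n\Delta\eps_n^2} \leq P_{b_0}\brackets*{\abs{W-E_{b_0}[W]} > \tfrac{1}{2}n\Delta\eps_n^2} \leq \frac{4(n\Delta+1)\eps_n^2}{(\tfrac{1}{2}n\Delta\eps_n^2)^2} = \frac{16(n\Delta+1)}{(n\Delta)^2\eps_n^2},\]
which tends to $0$ because the assumption $n\Delta\eps_n^2 \to \infty$ combined with $n\Delta\to\infty$ forces $(n\Delta)^2\eps_n^2 = (n\Delta)\cdot(n\Delta\eps_n^2) \to \infty$. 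No step here is technically subtle; the real work (obtaining the variance bound on the log-likelihood ratio over the full sample in terms of the $L^2$-distance between drifts) has already been carried out in \Cref{sec:SmallBallProbs} and is encoded directly in the definition of $B_{KL}^{(n)}$, so the ELBO itself is a routine consequence.
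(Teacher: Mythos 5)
Your proposal is correct and follows essentially the same route as the paper's own proof: Jensen's inequality on the renormalised restriction of $\Pi$ to $B_{KL}^{(n)}$, Fubini for the mean bound, Jensen/Fubini for the variance bound, and Chebyshev to conclude. (Minor slip: the middle term in your final display has a spurious factor of $4$ in the numerator — Chebyshev with $\Var(W)\leq(n\Delta+1)\eps_n^2$ and $t=\tfrac12 n\Delta\eps_n^2$ gives $4(n\Delta+1)/((n\Delta)^2\eps_n^2)$, not $16(n\Delta+1)/((n\Delta)^2\eps_n^2)$ — but this does not affect the conclusion.)
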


\begin{proof}
	Write $\Pi'=\Pi/\Pi(B_{KL}^{(n)})$ for the renormalised restriction of $\Pi$ to $B_{KL}^{(n)}$. Then by Jensen's inequality we have
	\[ \int_\Theta (p_b^{(n)}/p_0^{(n)})(X^{(n)})\dPi(b)\geq \Pi(B_{KL}^{(n)})\exp\brackets*{\int_{B_{KL}^{(n)}} \log(p_b^{(n)}/p_0^{(n)})(X^{(n)}))\dPi'(b)}.\]
	
	Write $Z=\int_{B_{KL}^{(n)}}\log(p_b^{(n)}/p_0^{(n)})\dPi'(b)=-\int_{B_{KL}^{(n)}}\log(p_0^{(n)}/p_b^{(n)})\dPi'(b)$. Applying Fubini's Theorem and using the definition of $B_{KL}^{(n)}$, we see that 
	\[ E_{b_0} Z\geq -\sup_{b\in B_{KL}^{(n)}} E_{b_0} \log(p_0^{(n)}/p_b^{(n)}) \geq -(n\Delta+1)\eps_n^2. \]  
	Further, applying Jensen's inequality and twice applying Fubini's Theorem, we see
	\begin{align*}
	\Var_{b_0} Z &= E_{b_0} \brackets*{\int_{B_{KL}^{(n)}} \log(p_b^{(n)}/p_0^{(n)})\dPi'(b)-E_{b_0}Z}^2\\
	&= E_{b_0} \brackets*{\int_{B_{KL}^{(n)}} \sqbrackets[\Big]{ \log(p_b^{(n)}/p_0^{(n)})-E_{b_0}\log(p_b^{(n)}/p_0^{(n)}) }\dPi'(b)}^2\\
	&\leq E_{b_0} \int_{B_{KL}^{(n)}} \brackets*{\log(p_b^{(n)}/p_0^{(n)})-E_{b_0}\log(p_b^{(n)}/p_0^{(n)})}^2\dPi'(b)\\
	&=\int_{B_{KL}^{(n)}} \Var_{b_0} \brackets*{\log(p_0^{(n)}/p_b^{(n)})}\dPi'(b)
	\leq (n\Delta+1) \eps_n^2,
	\end{align*}
	where to obtain the inequality in the final line we have used the bound on the variance of $\log(p_0^{(n)}/p_b^{(n)})$ for $b\in B_{KL}^{(n)}$. 
	
	Together, these bounds on the mean and variance of $Z$ tell us that \[P_{b_0} \brackets*{\exp(Z)<\exp(-2n\Delta\eps_n^2)}\leq P_{b_0}\brackets*{ \abs{Z-EZ}>(n\Delta-1)\eps_n^2}\leq \frac{(n\Delta+1)\eps_n^{2}}{ (n\Delta-1)^2\eps_n^4}, \] where we have applied Chebyshev's inequality to obtain the final inequality. The rightmost expression tends to zero since $n\Delta\eps_n^2\to \infty$ by assumption, and the result follows. 
\end{proof}

\begin{remark}
	The same is true, but with $P_{b_0}(A_n^c)$ tending to zero at a different rate, if we define $A_n$ instead by $A_n=\braces{\int_\Theta (p_b^{(n)}/p_0^{(n)} \dPi(b) \geq \Pi(B_{KL}^{(n)})e^{-Bn\Delta\eps_n^2}}$ for any $B>1$. That is to say, the exact value 2 in the exponent is not important for the proof.
\end{remark}

\begin{proof}[Proof of \Cref{thm:AbstractContractionResult}]
	We write $\Pi$ for $\Pi^{(n)}$. Since $\Pi(\Theta)=1$ by assumption, it is enough to show 
	$E_{b_0} \Pi\brackets*{\braces{b\in \Theta : \norm{b-b_0}_2 > M\eps_n} \mid X^{(n)}}\to 0$.
	
	Observe, for any measurable sets $S$ and $\Theta_n$, any event $A_n$ and any $\braces{0,1}$--valued function $\psi_n$ we can decompose
	\[ \Pi(S \mid X^{(n)})\leq \II_{A_n^c}+ \psi_n +\Pi(\Theta_n^c \mid X^{(n)})\II_{A_n} + \Pi(S\cap \Theta_n \mid X^{(n)})\II_{A_n} (1-\psi_n). \]
	We apply the above to 
	\begin{equation*} S=S_M^{(n)}=\braces{b\in \Theta \st \norm{b-b_0}_2>M\eps_n}, \quad 
	A_n = \braces[\Big]{\int_\Theta (p_b^{(n)}/p_0^{(n)})(X^{(n)})\dPi(b) \geq e^{-(\omega+2)n\Delta \eps_n^2}},
	\end{equation*}
	with $\Theta_n$ as given in the statement of the \namecref{thm:AbstractContractionResult} and with $\psi_n$ the tests given by \Cref{lem:ExistenceOfTests}, noting that the assumptions for \Cref{thm:AbstractContractionResult} include those needed for \Cref{lem:ExistenceOfTests}. 
	We take the expectation and bound each of the terms separately.
	\paragraph{Bounding $E_{b_0} \II_{A_n^c}$:} 
	We have $P_{b_0}(A_n^c)\to 0$ from \Cref{lem:ELBO}, since by assumption $\Pi(B_{KL}^{(n)}(\eps_n))\geq e^{-\omega n\Delta\eps_n^2}$.
	\paragraph{Bounding $E_{b_0} \psi_n$:} This expectation tends to zero by \Cref{lem:ExistenceOfTests}.
	\paragraph{Bounding $E_{b_0}\sqbrackets{\Pi(\Theta_n^c \mid X^{(n)})\II_{A_n}} $:} We have 
	\begin{align*}\Pi(\Theta_n^c \mid X^{(n)})\II_{A_n} &= \frac{\int_{\Theta_n^c} p_b^{(n)}(X^{(n)})\dPi(b)}{\int_\Theta p_b^{(n)}(X^{(n)})\dPi(b)}\II_{A_n}\\
	&= \frac{\int_{\Theta_n^c} (p_b^{(n)}/p_0^{(n)})(X^{(n)})\dPi(b)}{\int_\Theta (p_b^{(n)}/p_0^{(n)})(X^{(n)})\dPi(b)}\II_{A_n} \\
	&\leq e^{(\omega+2)n\Delta\eps_n^2}\int_{\Theta_n^c} (p_b^{(n)}/p_0^{(n)})(X^{(n)})\dPi(b).
	\end{align*}
	Since $E_{b_0}\sqbrackets{(p_b^{(n)}/p_0^{(n)})(X^{(n)})}=E_b\sqbrackets{1}=1$, taking expectations and applying Fubini's Theorem yields $E_{b_0}\sqbrackets{\Pi(\Theta_n^c \mid X^{(n)})\II_{A_n}} \leq  e^{(\omega+2)n\Delta\eps_n^2} \Pi(\Theta_n^c).$ Since we assumed $\Pi(\Theta_n^c)\leq e^{-(\omega+4)n\Delta\eps_n^2}$, we deduce that \[E_{b_0}[\Pi(\Theta_n^c \mid X^{(n)})\II_{A_n}] \leq \exp\brackets*{(\omega+2)n\Delta\eps_n^2-(\omega+4)n\Delta\eps_n^2} \to 0.\]
	
	\paragraph{Bounding $E_{b_0}\sqbrackets{\Pi(S\cap \Theta_n \mid X^{(n)})\II_{A_n} (1-\psi_n)}$:}
	By a similar argument to the above, observe that
	\[ E_{b_0}\sqbrackets{\Pi(S\cap \Theta_n \mid X^{(n)})\II_{A_n} (1-\psi_n)} \leq e^{(\omega+2)n\Delta\eps_n^2}\int_{b\in \Theta_n : \norm{b-b_0}_2>M\eps_n} E_{b}[1-\psi_n(X^{(n)})] \dPi(b).\] The integrand is bounded by $\sup_{b\in \Theta_n : \norm{b-b_0}_2>M\eps_n} E_b [1-\psi_n(X^{(n)})]\leq e^{-Dn\Delta\eps_n^2}$ by construction of the tests $\psi_n$, where by choosing $M$ large enough we could attain any fixed $D$ in the exponential term. Choosing $M$ corresponding to some $D>\omega+2$ we see \[E_{b_0}\sqbrackets{\Pi(S\cap \Theta_n \mid X^{(n)})\II_{A_n} (1-\psi_n)} \to 0.\qedhere\]
\end{proof}

\begin{proof}[Proof of \Cref{thm:ConcretePriorContraction}]
\begin{thmenum}
\item 
We apply \Cref{thm:AbstractContractionResult}. The key idea which allows us to control the bias and obtain this adaptive result with a sieve prior is \emph{undersmoothing}. Specifically, when we prove the small ball probabilities, we do so by conditioning on the hyperprior choosing a resolution $j_n$ which corresponds to the minimax rate $(n\Delta)^{-s/(1+2s)}$ rather than corresponding to the slower rate $(n\Delta)^{-s/(1+2s)}\log(n\Delta)^{1/2}$ at which we prove contraction. This logarithmic gap gives us the room we need to ensure we can achieve the bias condition \Cref{abstractpriorbias} and the small ball condition \Cref{abstractpriorsmallball} for the \emph{same} constant $\omega$. The argument goes as follows.

Write $\bar{\eps}_n^2=(n\Delta)^{-2s/(1+2s)}$ and let $\eps_n^2= (n\Delta)^{-2s/(1+2s)}\log(n\Delta)$. Choose $j_n$ and $l_n$ natural numbers satisfying (at least for $n$ large enough) 
\[\frac{1}{2}n\Delta\bar{\eps}_n^2\leq D_{j_n}=2^{j_n}\leq n\Delta\bar{\eps}_n^2, \qquad  \frac{1}{2}L n\Delta\eps_n^2 \leq D_{l_n}=2^{l_n}\leq Ln\Delta\eps_n^2,\] where $L$ is a constant to be chosen. Note that \Cref{eqn:dn-kn-epsn} holds by definition. Recall now from our choice of approximation spaces in \Cref{sec:ApproxSpaces} that we have $\norm{\pi_m b_0-b_0}_2\leq K(s)\norm{b_0}_{B_{2,\infty}^s}2^{-ms}$. 
For any fixed $L$ we therefore find that for $n$ large enough, writing $K=K(b_0)=K(s)2^s\norm{b_0}_{B_{2,\infty}^s}$, we have
\begin{align*} \norm{\pi_{l_n}b_0-b_0}_2 \leq K(b_0)(Ln\Delta \eps_n^2)^{-s}
= K(Ln\Delta\bar{\eps}_n^2\log(n\Delta))^{-s}
= KL^{-s}\bar{\eps}_n \log(n\Delta)^{-s}   \leq \eps_n. \end{align*}

Similarly, it can be shown that, with $A=A(\Ii)$ the constant of the small ball result (\Cref{thm:SmallBallProbs}) and for $n$ large enough, we have $\norm{b_0-\pi_{j_n}b_0}_2\leq A\eps_n/2.$

Set $\Theta_n=\braces{b_0}\cup (S_{l_n}\cap \Theta)$ and observe that the above calculations show that the bias condition \Cref{eqn:ThetaN} holds (since also for $b\in \Theta_n,$ if $b\not=b_0$ we have $\norm{\pi_{l_n}b-b}_2=0$). 

Next, for the small ball condition \Cref{abstractpriorsmallball}, recall \Cref{thm:SmallBallProbs} tells us that $\braces{b\in \Theta : \norm{b-b_0}_2\leq\nobreak A\eps_n}\subseteq B_{KL}^{(n)}(\eps_n)$ for all $n$ large enough. Thus it suffices to show, for some $\omega>0$ for which we can also achieve \Cref{abstractpriorbias}, that $\Pi(\braces{b\in \Theta : \norm{b-b_0}_2\leq A\eps_n})\geq e^{-\omega n\Delta\eps_n^2}$. Using that $\norm{b-b_0}_2 \leq \norm{b-\pi_{j_n} b_0}_2+\norm{\pi_{j_n}b_0-b_0}_2\leq \norm{b-\pi_{j_n}b_0}_2+A\eps_n/2$, and using our assumptions on $h$ and $\Pi_m$, we see that 
\begin{align*}
\Pi(\braces{b\in \Theta : \norm{b-b_0}_2\leq A\eps_n})&=\sum_m h(m)\Pi_m(\braces{b\in S_m : \norm{b-b_0}_2\leq A\eps_n}), \\
&\geq h(j_n) \Pi_{j_n} \brackets*{\braces{b\in S_{j_n} : \norm{b-\pi_{j_n} b_0}_2\leq A\eps_n/2}}\\
&\geq h(j_n) (\eps_n A\zeta/2)^{D_{j_n}}\\
&\geq B_1 \exp\brackets*{{-\beta_1 D_{j_n}}+ D_{j_n}\sqbrackets{\log(\eps_n) +\log(A\zeta/2)}}\\
&\geq B_1 \exp\brackets*{-C n\Delta \bar{\eps}_n^2-C n\Delta \bar{\eps}_n^2 \log(\eps_n^{-1})} 
\end{align*}
for some constant $C=C(\Ii,\beta_1,\zeta)$. Since $\log(\eps_n^{-1})=\frac{s}{1+2s}\log(n\Delta)-\frac{1}{2}\log \log (n\Delta)\leq \log(n\Delta),$
we deduce that $\Pi(\braces{b\in \Theta : \norm{b-b_0}_2\leq A\eps_n})\geq B_1e^{-C'n\Delta \bar{\eps}_n^2\log(n\Delta)}=B_1e^{-C'n\Delta \eps_n^2},$ with a different constant $C'$. Changing constant again to some $\omega=\omega(\Ii,\beta_1,B_1,\zeta)$, we absorb the $B_1$ factor into the exponential for large enough $n$.

For \Cref{abstractpriorbias}, since $\Pi(\Theta^c)=0$ by assumption, we have $\Pi(\Theta_n^c)\leq \Pi(S_{l_n}^c)=\sum_{m=l_n+1}^\infty h(m).$ We have assumed that $h(m)\leq B_2e^{-\beta_2 D_m}$, which ensures that the sum is at most a constant times $e^{-\beta_2 D_{l_n}}\leq e^{-\frac{1}{2}L\beta_2n\Delta \eps_n^2}$. For the $\omega=\omega(\Ii,\beta_1,B_1,\zeta)$ for which we proved \Cref{abstractpriorsmallball} above, we can therefore choose $L$ large enough to guarantee $\Pi(\Theta_n^c)\leq e^{-(\omega+4)n\Delta\eps_n^2}$.

\item  Let $\eps_n$ and $j_n$ be as in the statement of the \namecref{thm:ConcretePriorContraction} and define $l_n$ as above (here we can take $L=1$). Similarly to before, we apply results from \Cref{sec:ApproxSpaces} to see 
\[\begin{rcases*} \norm{\pi_{l_n}b-b}_2\leq \eps_n \\ \norm{\pi_{j_n}b-b}_2\leq \eps_n\end{rcases*} \text{ for all $n$ sufficiently large and all $b\in\Theta_s(A_0)$},\]

Set $\Theta_n=\Theta_s(A_0)$ for all $n$. Our assumptions then guarantee the bias condition \Cref{abstractpriorbias} will hold for any $\omega$ (indeed, $\Pi^{(n)}(\Theta_n^c)=0$). Thus it suffices to prove that there exists an $\omega$ such that $\Pi^{(n)}(\braces{b\in \Theta_s(A_0) : \norm{b-b_0}_2\leq 3\eps_n})\geq e^{-\omega n\Delta \eps_n^2},$ since we can absorb the factor of 3 into the constant $M$ by applying \Cref{thm:AbstractContractionResult} to $\xi_n=3\eps_n$.

The prior concentrates on $\Theta_s(A_0)$, so that we have $\Pi^{(n)}(\braces{b : \norm{\pi_{j_n}b-b}_2\leq \eps_n})=1$, and $b_0$ lies in $\Theta_s(A_0)$, so that $\norm{\pi_{j_n}b_0-b_0}_2 \leq \eps_n$. Thus
\[
\Pi^{(n)}(\braces{b\in \Theta_s(A_0) : \norm{b-b_0}_2\leq 3\eps_n}) \geq \Pi^{(n)}(\braces{b\in \Theta_s(A_0) : \norm{\pi_{j_n}b-\pi_{j_n}b_0}_2\leq \eps_n}).
\] 
From here the argument is very similar to the previous part (indeed, it is slightly simpler) so we omit the remaining details. \qedhere
\end{thmenum}
\end{proof}

\subsection{Explicit priors: proofs} 
\label{sec:ExplicitPriorProofs}

\begin{proof}[Proof of \Cref{prop:BasicSievePrior}]

We verify that the conditions of \Cref{thm:SievePriorContraction} are satisfied. 
Condition \Cref{sievepriorhyperparameter} holds by construction. 
The $B_{\infty,1}^s$--norm can be expressed as  
\begin{equation} \label{eqn:Binfty1^s_characterisation} \norm{f}_{B_{\infty,1}^s}=\abs{f_{-1,0}}+\sum_{l=0}^\infty 2^{l(s+1/2)} \max_{0\leq k<2^l}{\abs{f_{lk}}}, \end{equation} (see \cite{Gine2016} Section 4.3) so that any $b$ drawn from our prior lies in $B^1_{\infty,1}$ and satisfies the bound $\norm{b}_{B^1_{\infty,1}}\leq (B+1)(2+\sum_{l\geq 1} l^{-2})$. It follows from standard Besov spaces results (eg.\ \cite{Gine2016} Proposition 4.3.20, adapted to apply to periodic Besov spaces) that $b\in C_\per^1([0,1])$, with a $C_\per^1$--norm bounded in terms of $B$. Thus $\Pi(\Theta)=1$ for an appropriate choice of $K_0$. We similarly see that $b_0\in\Theta$.
It remains to show that \Cref{sievepriorsmallball} holds.
We have \begin{align*} \norm{b-\pi_m b_0}_2^2=\sum_{\substack{-1\leq l< m\\0\leq k <2^l}} \tau_l^2(u_{lk}-\beta_{lk})^2  &\leq \brackets[\Big]{1+\sum_{l=0}^{m-1} 2^{-2l}}\max_{\substack{-1\leq l<m, \\ 0\leq k<2^l}}\abs{u_{lk}-\beta_{lk}} ^2 < 4 \max_{\substack{-1\leq l< m, \\ 0\leq k<2^l}} \abs{u_{lk}-\beta_{lk}}^2, \end{align*} 
so that $\Pi(\braces{b\in S_m : \norm{b-\pi_m b_0}_2\leq \eps})\geq \Pi(\abs{u_{lk}-\beta_{lk}}\leq \eps/2 ~~\forall l,k,-1\leq l< m,k<2^l).$ 
Since we have assumed $\abs{\beta_{lk}}\leq B\tau_l$ and $q(x)\geq \zeta$ for $\abs{x}\leq B$, it follows from independence of the $u_{lk}$ that the right-hand side of this last expression is lower bounded by $(\eps \zeta/2)^{D_m},$ so that \Cref{sievepriorsmallball} holds with $\zeta/2$ in place of $\zeta$. 
\end{proof}

\begin{proof}[Proof of \Cref{prop:KnownSmoothnessPrior}]
	We verify the conditions of \Cref{thm:sK0KnownContraction}. Since $s> 1$ similarly to the proof of \Cref{prop:BasicSievePrior} we see $\Pi^{(n)}(\Theta)=1$ and $b_0\in\Theta$ for an appropriate choice of $K_0$. Observe also that for $A_0=2B+2$ we have $\Pi^{(n)}(\Theta_s(A_0))=1$ by construction, and $b_0\in\Theta_s(A_0)$ by \Cref{assumption:b0inB_infty1^1}, using the wavelet characterisation \Cref{eqn:B2inftywaveletcharacterisation} of $\norm{\cdot}_{B_{2,\infty}^s}$. Thus \Cref{smoothnesspriorsupport} holds and it remains to check \Cref{smoothnesspriorsmallball}.
	
	Let $j_n\in \NN$ be such that $j_n\leq \bar{L}_n$, $2^{j_n}\sim (n\Delta)^{1/(1+2s)}.$ Similarly to the proof of \Cref{prop:BasicSievePrior} we have
	\begin{equation*}\Pi^{(n)}(\braces{b\in \Theta : \norm{\pi_{j_n}b-\pi_{j_n} b_0}_2\leq \eps_n}) \geq \Pi^{(n)}(\abs{u_{lk}-\beta_{lk}}\leq \eps_n/2  ~~\forall l< j_n,~\forall k<2^l) \geq (\eps_n\zeta/2 )^{D_{j_n}},\end{equation*}
	so we're done.
\end{proof}


\begin{proof}[Proof of \Cref{prop:InvariantDensityPrior}]
	We include only the key differences to the previous proofs.
	
	Adapting slightly the proof of \Cref{prop:BasicSievePrior}, we see that $H$ and $H_0$ both have $B_{\infty,1}^2$--norm bounded by $(B+1)(2+\sum_{l\geq 1} l^{-2}).$ Since $\norm{b}_{C^1_\per}\leq \frac{1}{2}\norm{\sigma^2}_{C^1_\per}\brackets{1+\norm{H}_{C^2_\per}}$ and using \cite{Gine2016} Proposition 4.3.20, adapted to apply to periodic Besov spaces, to control $\norm{H}_{C^2_\per}$ by $\norm{H}_{B^2_{\infty,1}}$, we see that for some constant $K_0=K_0(B)$ we have $b_0\in \Theta(K_0)$ and $\Pi^{(n)}(\Theta(K_0))=1$. 
	From the wavelet characterisation 
	\begin{equation*} 
	\norm{f}_{B_{2,2}^s}= \abs{f_{-1,0}}+\brackets[\Big]{ \sum_{l=0}^\infty 2^{2ls} \sum_{k=0}^{2^l-1} f_{lk}^2}^{1/2} \end{equation*} it can be seen that $H$ and $H_0$ have Sobolev norm $\norm{\cdot}_{B_{2,2}^{s+1}}$ bounded by some $A_0'$, hence for some constant $K=K(A_0',s)$ we have $\norm{H-\pi_m H}_{B_{2,2}^1}\leq K2^{-ms}$ and similarly for $H_0$. Since the $B_{2,2}^{s+1}$ norm controls the $B^{s+1}_{2,\infty}$ norm, and we have assumed $\sigma^2\in \Theta_{s+1}$, we additionally see that $b_0\in \Theta_s(A_0)$ and $\Pi^{(n)}(\Theta_s(A_0))=1$ for an appropriate constant $A_0$. Note that here we also depend on the assumption $\sigma^2\in C^s$ to allow us to control $\norm{b}_{B^s_{2,\infty}}$: Remark 1 on page 143 of Triebel \cite{Triebel1983} and Proposition 4.3.20 from \cite{Gine2016} together tell us that $\norm{\sigma^2H'}_{B^s_{2,\infty}}\leq c \norm{\sigma^2}_{C^\alpha} \norm{H'}_{B^s_{2,\infty}}$ for some constant $c=c(s)$, and similarly for $H_0$.

	Observe, for $j_n\in\NN$ such that $j_n\leq \bar{L}_n$ and $2^{j_n}\sim (n\Delta)^{1/(1+2s)}$, \begin{align*}
	\norm{\pi_{j_n}b-\pi_{j_n}b_0}_2 &\leq \norm{b-b_0}_2 =\norm{\sigma^2 (H'-H_0')/2}_2 \leq \frac{1}{2}\sigma_U^2 \norm{H-H_0}_{B_{2,2}^1} \\
	& \qquad \leq \frac{\sigma_U^2}{2}\brackets[\Big]{ \norm{H-\pi_{j_n}H}_{B_{2,2}^1}+\norm{H_0-\pi_{j_n}H_0}_{B_{2,2}^1}+\norm{\pi_{j_n}H-\pi_{j_n}H_0}_{B_{2,2}^1}}. 
	\end{align*}
	Now $\sigma_U^2\norm{H-\pi_{j_n}H}_{B_{2,2}^1}\leq \sigma_U^2 K 2^{-j_n s} \leq C (n\Delta)^{-s/(1+2s)}\leq \frac{1}{2}(n\Delta)^{-s/(1+2s)}\log(n\Delta)^{1/2}=\frac{1}{2} \eps_n$ for large enough $n$, and similarly for $H_0$.
	
	Thus, \begin{align*}\Pi^{(n)}\brackets[\Big]{\braces[\big]{b : \norm{\pi_{j_n}b-\pi_{j_n}b_0}_2\leq \eps_n}} &\geq \Pi^{(n)} \brackets[\Big]{\braces[\big]{b : \norm{\pi_{j_n}H-\pi_{j_n}H_0}_{B_{2,2}^1}\leq \sigma_U^{-2}\eps_n/2}} \\
	&\geq \Pi^{(n)}(\abs{u_{lk}-\beta_{lk}}\leq \kappa \eps_n ~~\forall l<j_n,~\forall k<2^l),
	\end{align*}
	where the final inequality can be seen to hold from the wavelet representation of $\norm{\cdot}_{B_{2,2}^1}$ (the constant $\kappa$ can be taken to be $\kappa=\frac{1}{2}\sigma_U^{-2} (1+(\sum_{k=0}^\infty 2^{-2l})^{1/2})^{-1}>\sigma_U^{-2}/6)$.
	The small ball condition \Cref{smoothnesspriorsmallball} follows from our updated assumptions.
\end{proof}

\section*{Acknowledgements}
This work was supported by the UK Engineering and Physical Sciences Research Council (EPSRC) grant EP/L016516/1 for the University of Cambridge Centre for Doctoral Training, the Cambridge Centre for Analysis. I would like to thank Richard Nickl for his valuable support throughout the process of writing this paper. I would also like to thank two anonymous referees for their very helpful suggestions.

\begin{appendices}\label{sec:appendices}
\crefalias{section}{appsec}

\section{Technical lemmas} \label{sec:TechnicalLemmas}

\begin{lemma}\label{lem:abstractBayes}
Let $\QQ,\PP$ be mutually absolutely continuous probability measures and write $f=\od{\QQ}{\PP}$. Then, for any measurable $g$ and any sub--$\sigma$--algebra $\Gg$, ${E_\QQ \sqbrackets{g \mid \Gg}}=\frac{E_\PP[fg \mid \Gg]}{E_\PP[f \mid \Gg]}. $
\end{lemma}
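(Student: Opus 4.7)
The plan is to verify directly that the right-hand side satisfies the two defining properties of the conditional expectation $E_\QQ[g\mid\Gg]$: it is $\Gg$-measurable, and it integrates correctly against indicators of sets in $\Gg$ under $\QQ$.

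First I would note that $E_\PP[f\mid\Gg]>0$ $\PP$-almost surely (equivalently $\QQ$-almost surely, by mutual absolute continuity): since $f>0$ $\PP$-a.s., the event $\{E_\PP[f\mid\Gg]=0\}\in\Gg$ has $\PP$-expectation $E_\PP[\II\{E_\PP[f\mid\Gg]=0\}\cdot f]=E_\PP[\II\{E_\PP[f\mid\Gg]=0\}\cdot E_\PP[f\mid\Gg]]=0$. Hence the quotient on the right is well-defined $\QQ$-a.s., and it is $\Gg$-measurable by construction.

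Next, pick an arbitrary $A\in\Gg$ and compute $E_\QQ$ against $\II_A$. On the one hand, by the change-of-measure formula and the defining property of $E_\PP[\cdot\mid\Gg]$,
\[ E_\QQ[\II_A g]=E_\PP[f\II_A g]=E_\PP[\II_A E_\PP[fg\mid\Gg]]. \]
On the other hand, writing $R$ for the right-hand side of the claim, the quantity $\II_A R$ is $\Gg$-measurable, so
\[ E_\QQ[\II_A R]=E_\PP[f\II_A R]=E_\PP\!\left[E_\PP[f\mid\Gg]\cdot\frac{\II_A E_\PP[fg\mid\Gg]}{E_\PP[f\mid\Gg]}\right]=E_\PP[\II_A E_\PP[fg\mid\Gg]], \]
where we used the tower property in the second equality and cancelled $E_\PP[f\mid\Gg]$ (which is a.s.\ nonzero) in the third. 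The two right-hand sides agree, so $R$ satisfies the characterising property of $E_\QQ[g\mid\Gg]$ and the identification follows by uniqueness of conditional expectations.

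The argument is essentially the abstract Bayes formula; the only potential pitfall is the division by $E_\PP[f\mid\Gg]$, which is handled by the positivity observation at the start, and the interchange of multiplication by $\Gg$-measurable factors with the conditional expectation, which is standard. One should assume $g$ is either $\PP$-integrable (equivalently $fg$ is $\PP$-integrable, i.e.\ $g$ is $\QQ$-integrable) so that all conditional expectations involved are well-defined; this hypothesis is implicit in the statement.
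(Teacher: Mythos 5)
Your proof is correct and takes essentially the same route as the paper: both arguments test against indicators $\II_A$, $A\in\Gg$, and use the change of measure $E_\QQ[\,\cdot\,]=E_\PP[f\,\cdot\,]$ together with the defining property of conditional expectation — the paper organises the identical chain of equalities as showing that $E_\QQ[g\mid\Gg]\,E_\PP[f\mid\Gg]$ is a version of $E_\PP[fg\mid\Gg]$, while you verify directly that the quotient is a version of $E_\QQ[g\mid\Gg]$. Your added observations that $E_\PP[f\mid\Gg]>0$ almost surely and that $\QQ$-integrability of $g$ is needed merely make explicit points the paper leaves implicit.
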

\begin{proof}
This follows straightforwardly using the characterisation of conditional expectation in terms of expectations against $\Gg$--measurable functions. Precisely, we recall that \begin{equation*}\label{eqn:CondExpCharacterisation} \tag{$\star$}
E_\PP [c(X)v(X) ]=E_\PP [u(X)v(X)] \end{equation*} holds for any $\Gg$--measurable function $v$ if $c(X)=E_\PP[u(X)\mid \Gg]$ a.s., and conversely if $c(X)$ is $\Gg$--measurable and \Cref{eqn:CondExpCharacterisation} holds for any $\Gg$--measurable $v$ then $c(X)$ is a version of the conditional expectation $E_\PP[u(X)]$. For the converse statement it is in fact enough for  \Cref{eqn:CondExpCharacterisation} to hold for all indicator functions $v=\II_A$, $A\in\Gg$.

Applying \Cref{eqn:CondExpCharacterisation} repeatedly we find, for $A\in \Gg$,
\[E_\PP\sqbrackets*{ E_\QQ [g \mid \Gg]E_\PP[f \mid \Gg]\II_A} = E_\PP\sqbrackets*{fE_\QQ[g\mid \Gg] \II_A}=E_\QQ \sqbrackets*{E_\QQ[g\mid\Gg] \II_A} =E_\QQ \sqbrackets*{g\II_A}=E_\PP\sqbrackets*{fg\II_A},
\]
so that, since also $E_\QQ [g \mid \Gg]E_\PP[f \mid \Gg]$ is $\Gg$-measurable, it is (a version of) $E_\PP\sqbrackets*{fg \mid \Gg}$, as required.
\end{proof}

\begin{lemma}\label{lem:VarianceTensorises}
The variance of the log likelihood ratio tensorises in this model, up to a constant. Precisely,
$\Var_{b_0} \log\brackets*{\frac{p_0^{(n)}(X^{(n)})}{p_b^{(n)}(X^{(n)})}}
\leq 3\Var_{b_0}\brackets*{\log\frac{\pi_0(X_0)}{\pi_b(X_0)}} +3n\Var_{b_0}\brackets*{\log\frac{p_0(X_0,X_\Delta)}{p_b(X_0,X_\Delta)}}. $\end{lemma}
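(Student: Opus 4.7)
The plan is to combine the Markov factorisation of $p_b^{(n)}$ with a martingale decomposition. Writing $\phi(x)=\log(\pi_0/\pi_b)(x)$ and $\psi(x,y)=\log(p_0/p_b)(\Delta,x,y)$, the chain rule gives
\[ L_n := \log\frac{p_0^{(n)}(X^{(n)})}{p_b^{(n)}(X^{(n)})} = \phi(X_0) + \sum_{i=1}^n \psi(X_{(i-1)\Delta}, X_{i\Delta}). \]
Introduce the pointwise conditional KL divergence $g(x) := K(p_0(\Delta,x,\cdot), p_b(\Delta,x,\cdot)) = E_{b_0}[\psi(X_0,X_\Delta) \mid X_0=x]$; this is $1$--periodic thanks to the joint translation invariance $p_b(\Delta,x+1,y+1) = p_b(\Delta,x,y)$ inherited from periodicity of $b,\sigma$. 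Split $L_n = \phi(X_0) + M_n + G_n$, where $M_n := \sum_{i=1}^n [\psi(X_{(i-1)\Delta},X_{i\Delta}) - g(X_{(i-1)\Delta})]$ is a sum of martingale differences for the filtration $\mathcal{F}_i := \sigma(X_0,\ldots,X_{i\Delta})$, and $G_n := \sum_{i=1}^n g(X_{(i-1)\Delta})$. Applying the three--term Cauchy--Schwarz bound $\Var(A+B+C) \leq 3(\Var A + \Var B + \Var C)$ produces the factor $3$ in front of $\Var(\phi(X_0))$ directly and reduces the task to bounding each of $\Var(M_n)$ and $\Var(G_n)$ by $n\,\Var_{b_0}(\psi(X_0, X_\Delta))$.

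For the martingale piece, orthogonality of the differences gives $\Var(M_n) = \sum_{i=1}^n E[\Var(\psi(X_{(i-1)\Delta},X_{i\Delta}) \mid \mathcal{F}_{i-1})]$. The Markov property plus stationarity of the periodised chain $\dot X$ under $\mu_{b_0}$ (available since $\psi$ is $1$--periodic jointly, using the observation after \Cref{assumption:invariant} applied to $\psi(\cdot,\cdot)$) collapses every summand to the common value $E_{\mu_0}\Var(\psi(X_0,X_\Delta) \mid X_0)$, which by the law of total variance is dominated by $\Var_{b_0}(\psi(X_0,X_\Delta))$. Hence $\Var(M_n) \leq n\,\Var_{b_0}(\psi(X_0,X_\Delta))$, as required.

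The main obstacle is the bound on $\Var(G_n)$: since $(X_{i\Delta})$ is a dependent Markov chain rather than i.i.d., a blind Cauchy--Schwarz would only yield $O(n^2)$. To recover the correct $O(n)$ scale I would expand under stationarity as $\Var(G_n) = nV_g + 2\sum_{k=1}^{n-1}(n-k)\Cov(g(X_0),g(X_{k\Delta}))$ (with $V_g:=\Var(g(X_0))$), and exploit the uniform geometric mixing $\|\dot p_0(k\Delta,x,\cdot)-\pi_0\|_{TV} \leq Ke^{-k\Delta/(2M)}$ established via Bhattacharya--Chen--Goswami Theorem 2.6 (the same estimate invoked in the proof of \Cref{thm:concentration-for-diffusions}). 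This yields the bound $|\Cov(g(X_0),g(X_{k\Delta}))| \leq Ke^{-k\Delta/(2M)}\|g-Eg\|_\infty\sqrt{V_g}$; the resulting geometric sum in $k$ carries a factor $O(\Delta^{-1})$, but this is exactly cancelled by the $O(\Delta)$ pointwise scale of $g(x)=K(p_0(\Delta,x,\cdot), p_b(\Delta,x,\cdot))$ coming from Fisher-information--type expansions of the transition KL in small $\Delta$ (obtained by the Girsanov/It\^o isometry computations already used in the proof of \Cref{lem:KL2-bounded}). One therefore obtains $\Var(G_n) \leq Cn\,\Var_{b_0}(\psi(X_0,X_\Delta))$ with $C$ depending only on $\Ii$, and the clean factor $3$ in the stated inequality is then realised either by a careful optimisation of the weights in the Cauchy--Schwarz splitting $\Var(A+B+C)\leq (1+\alpha+\beta)\Var A + (1+1/\alpha+\gamma)\Var B + (1+1/\beta+1/\gamma)\Var C$, or by absorbing the structural constants into the final inequality.
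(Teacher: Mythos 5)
Your decomposition $L_n=\phi(X_0)+M_n+G_n$ and the martingale step are fine ($\Var(M_n)=\sum_i E[\Var(\psi(X_{(i-1)\Delta},X_{i\Delta})\mid X_{(i-1)\Delta})]\leq n\Var_{b_0}(\psi)$ by stationarity of the periodised chain and the law of total variance). The genuine gap is the asserted bound $\Var(G_n)\leq Cn\Var_{b_0}(\psi(X_0,X_\Delta))$ with $C=C(\Ii)$: it does not follow from the ingredients you cite, and the claimed ``exact cancellation'' is not there. Summing your covariance estimate $\abs{\Cov(g(X_0),g(X_{k\Delta}))}\leq Ke^{-k\Delta/(2M)}\norm{g-Eg}_\infty\sqrt{V_g}$ over $k$ produces a term of order $n\Delta^{-1}\norm{g-Eg}_\infty\sqrt{V_g}$; using the best available pointwise bounds $\norm{g}_\infty\lesssim \Delta\norm{f}_\infty^2$ and $\sqrt{V_g}\lesssim \Delta\norm{f}_\infty\norm{f}_{\mu_0}$ (with $f=(b_0-b)/\sigma$), this is of order $n\Delta\norm{f}_{\mu_0}$ up to $\Ii$-constants. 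But $\Var_{b_0}(\psi)$ is itself of order at most $\Delta\norm{f}_{\mu_0}^2$, and $\norm{f}_{\mu_0}$ can be arbitrarily small, so $n\Delta\norm{f}_{\mu_0}$ is not $O(n\Var_{b_0}(\psi))$ uniformly over $b\in\Theta$; even for the downstream use in \Cref{thm:SmallBallProbs} this surplus is of order $n\Delta\eps_n$, whereas the definition of $B^{(n)}_{KL}(\eps_n)$ requires $(n\Delta+1)\eps_n^2$. The structural reason is that the compensator terms $g(X_{(i-1)\Delta})$ are almost perfectly correlated across $\asymp\Delta^{-1}$ consecutive indices (the chain barely moves in time $\Delta$ and the mixing time is $\asymp\Delta^{-1}$), so generically $\Var(G_n)\asymp n\Delta^{-1}V_g$ rather than $nV_g$. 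To convert $n\Delta^{-1}V_g$ into $n\Var_{b_0}(\psi)$ you would need a lower bound of the form $\Var_{b_0}(\psi)\gtrsim \Delta^{-1}V_g$, i.e.\ that the one-step log-likelihood ratio retains an order-$\Delta\norm{f}_{\mu_0}^2$ amount of variability; nothing in your sketch (nor the Girsanov upper bounds you invoke) supplies such a lower bound, and without it the lemma as stated is not reached by this route. A sharper $\phi$-mixing covariance inequality of the form $\abs{\Cov}\lesssim e^{-k\Delta/(2M)}V_g$ would give $\Var(G_n)\lesssim n\Delta\norm{f}_\infty^2\norm{f}_{\mu_0}^2$, which would suffice for the application but still not for the inequality with $n\Var_{b_0}(\psi)$ on the right-hand side.

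For comparison, the paper's own argument is far more elementary and avoids long-range covariances altogether: it writes the log-likelihood ratio as $U+V+W$ with $U=\log(\pi_0/\pi_b)(X_0)$ and $V,W$ the sums of the transition terms over odd and even indices respectively, treats the summands within each of $V$ and $W$ (which involve disjoint consecutive pairs of observations) as independent so that $\Var(V)$ and $\Var(W)$ are each a count of terms times the one-step variance, and then applies the same three-term inequality $\Var(U+V+W)\leq 3(\Var U+\Var V+\Var W)$ that you use. No mixing, no small-$\Delta$ expansion and no martingale structure enter. Your instinct that the dependence between blocks needs to be confronted is reasonable, but the route you chose to handle it is exactly where the proposal stalls, so as written there is a genuine gap at the $\Var(G_n)$ step.
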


\begin{proof}
We write $\log\brackets[\Big]{\frac{p_0^{(n)}(X^{(n)})}{p_b^{(n)}(X^{(n)})}}=U+V+W,$
where $U=\log\frac{\pi_0(X_0)}{\pi_b(X_0)}$ and 
\[ V=\sum_{\substack{1\leq k \leq n \\ k\text{ odd}}} \log\frac{p_0(\Delta,X_{(k-1)\Delta},X_{k\Delta})}{p_b(\Delta,X_{(k-1)\Delta},X_{k\Delta})}, \qquad W=\sum_{\substack{1\leq k \leq n \\ k\text{ even}}} \log\frac{p_0(\Delta,X_{(k-1)\Delta},X_{k\Delta})}{p_b(\Delta,X_{(k-1)\Delta},X_{k\Delta})}.\]
Note now that $V$ and $W$ are both sums are of \emph{independent} terms since $(X_{k\Delta})_{k\leq n}$ is a Markov chain. We thus have
\[\Var_{b_0}(V)=\#\braces{1\leq k \leq n : k\text{ odd}} \Var_{b_0}\brackets*{\log\frac{p_0(X_0,X_\Delta)}{p_b(X_0,X_\Delta)}},\] and a corresponding result for $W$. Using $\Var(R+S+T)=\Var(R)+\Var(S)+\Var(T)+2\Cov(R,S)+2\Cov(S,T)+2\Cov(T,R)$ and $2\Cov(R,S)\leq \Var(R)+\Var(S)$, one derives the elementary inequality \(\Var(U+V+W)\leq 3(\Var(U)+\Var(V)+\Var(W)).\)
The result follows. 
\end{proof}

\begin{lemma}\label{lem:DiffusionBridge}
Let $\tilde{p}_0$ be as in \Cref{eqn:tildep}. Let $p^*(\Delta,x,y)$ be the density of transitions from $x$ to $y$ in time $\Delta$ for a process $U\sim \WW_\sigma^{(x)}$. Then
\[\frac{p_0(\Delta,x,y)}{p_*(\Delta,x,y)}=E_{\WW_\sigma^{(x)}}\sqbrackets*{\tilde{p}_0(U) \mid U_\Delta =y}.\]
\end{lemma}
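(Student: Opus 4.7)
The plan is to verify the claimed identity by testing both sides against bounded measurable functions of the endpoint, which avoids the delicate issue of conditioning on the null event $\{U_\Delta = y\}$. Concretely, write $g(y) := E_{\WW_\sigma^{(x)}}[\tilde p_0(U) \mid U_\Delta = y]$ for the regular conditional expectation; this exists because $U_\Delta$ takes values in the standard Borel space $\RR$, and $g$ is defined uniquely up to sets of $\WW_\sigma^{(x)} \circ U_\Delta^{-1}$--null measure. Our goal is then to show $p_0(\Delta,x,y) = g(y)\, p_*(\Delta,x,y)$ for (Lebesgue-)almost every $y$.

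First I would pick an arbitrary bounded measurable test function $\phi:\RR\to\RR$ and compute $E_{P_{b_0}^{(x)}}[\phi(X_\Delta)]$ in two different ways. On the one hand, by definition of the transition density $p_0(\Delta,x,\cdot)$,
\[ E_{P_{b_0}^{(x)}}[\phi(X_\Delta)] = \int_\RR \phi(y)\, p_0(\Delta,x,y)\, \dy. \]
On the other hand, Girsanov's Theorem (\Cref{thm:Girsanov}) identifies $\tilde p_0 = \od{P_{b_0}^{(x)}}{\WW_\sigma^{(x)}}$, so
\[ E_{P_{b_0}^{(x)}}[\phi(X_\Delta)] = E_{\WW_\sigma^{(x)}}[\phi(U_\Delta)\,\tilde p_0(U)]. \]

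Next, applying the tower property with respect to the $\sigma$--algebra $\sigma(U_\Delta)$, and using that $\phi(U_\Delta)$ is $\sigma(U_\Delta)$--measurable, the right-hand side becomes
\[ E_{\WW_\sigma^{(x)}}\bigl[\phi(U_\Delta)\, E_{\WW_\sigma^{(x)}}[\tilde p_0(U)\mid U_\Delta]\bigr] = E_{\WW_\sigma^{(x)}}[\phi(U_\Delta)\, g(U_\Delta)] = \int_\RR \phi(y)\, g(y)\, p_*(\Delta,x,y)\,\dy, \]
where the last step uses that $p_*(\Delta,x,\cdot)$ is the density of $U_\Delta$ under $\WW_\sigma^{(x)}$. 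Equating the two expressions gives
\[ \int_\RR \phi(y)\bigl[ p_0(\Delta,x,y) - g(y)\, p_*(\Delta,x,y)\bigr] \dy = 0 \qquad \text{for every bounded measurable } \phi, \]
which forces $p_0(\Delta,x,y) = g(y)\, p_*(\Delta,x,y)$ for Lebesgue--a.e.\ $y$. Since $p_*(\Delta,x,y)>0$ Lebesgue--a.e.\ (the law of $U_\Delta$ under $\WW_\sigma^{(x)}$ has full support on $\RR$ with a strictly positive density, by the standard Aronson bounds for the uniformly elliptic operator $\tfrac12\sigma^2\partial_{xx}$), we may divide and obtain the claimed formula, as an equality of versions of the density ratio.

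The only potential obstacle is the usual measure--zero conditioning issue, and it is handled exactly by the test--function argument above: the identity is really a statement about disintegrations, and the right-hand side is only defined up to a null set anyway, so the Lebesgue--a.e.\ equality we obtain is the correct and only meaningful sense in which the lemma can hold.
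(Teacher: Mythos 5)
Your proposal is correct and takes essentially the same route as the paper: both identify $\tilde{p}_0$ via Girsanov's Theorem and then verify the density identity by integrating against indicator sets (equivalently, bounded measurable functions) of the endpoint $U_\Delta$, the paper packaging the conditioning step as a disintegration of $\WW_\sigma^{(x)}$ into bridge laws $\BB_\sigma^{(x,y)}$ (citing Pollard) where you instead invoke the abstract conditional expectation $E_{\WW_\sigma^{(x)}}[\tilde{p}_0(U)\mid U_\Delta]$ and the tower property. The resulting computation is the same, and your closing remarks on the a.e.\ nature of the identity and the positivity of $p_*$ are harmless additions.
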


\begin{proof}
Let $U\sim\WW_\sigma^{(x)}$ and let $\BB_\sigma^{(x,y)}$ denote the law on $C([0,\Delta])$ of $U$ conditional on $U_\Delta=y$. We define the conditional law rigorously via disintegration (eg.\ see \cite{Pollard2001} Chapter 5, Theorem 9, applied to $\lambda=\WW_\sigma^{(x)}$, $\Xx=C([0,\Delta])$ with the sup norm, $T((U_t)_{t\leq \Delta})=U_\Delta$ and $\mu(\dy)=p^*(\Delta,x,y)\dy$), so that 
\begin{equation*}\label{eq:disintegration}
E_{\WW_\sigma^{(x)}}[f(U)]=\int_{-\infty}^\infty p^*(\Delta,x,y) E_{\BB_\sigma^{(x,y)}}[f(U)]\dy, \end{equation*}
for all non-negative measurable functions $f$. 
Taking $f(U)=\tilde{p}_0(U)\II\braces{U_\Delta\in A}$ for an arbitrary Borel set $A\subseteq \RR$, we see
\[ P_{b_0}^{(x)}(X_\Delta\in A) = \int_{-\infty}^{\infty} p^*(\Delta,x,y)\II\braces{y\in A}E_{B_\sigma^{(x,y)}}[\tilde{p}_0]\dy.\] The result follows.
\end{proof}

\section{Proofs for \Cref{sec:HolderProperties}}\label{sec:HolderPropertiesProofs}

\begin{proof}[Proof of \Cref{lem:X_holder}]
	Set $Y_t=S(X_t)$, where 
	\[S(x) = \int_0^x \exp\brackets[\Big]{-\int_0^y \frac{2b}{\sigma^2} (z)\dz}\dy\]
	is the scale function, and let $\psi$ be the inverse of $S$. 
	Since $S''$ exists and is continuous, It{\^o}'s formula applies to yield 
	\[\dY_t=\tilde{\sigma}(Y_t)\dW_t, \quad \tilde{\sigma}(y):=S'(\psi(y))\sigma(\psi(y)).\]
	Let $A=A(\Ii)=\max\brackets{\sigma_U^2 \exp(4K_0/\sigma_L^2),1}$ and observe that $\norm{\tilde{\sigma}^2}_\infty \leq A$. Thus, there are constants $C=C(\Ii)$ and $\lambda=\lambda(\Ii)$ so that for any $u>C\max(\log m,1)^{1/2}$, the event
		\[\Dd=\braces*{ \sup \braces[\bigg]{\frac{\abs{Y_t-Y_s}}{w_m(\abs{t-s})} : {s,t\in [0,m],~s\not= t,~\abs{t-s}\leq A^{-1}e^{-2}}}\leq u},\]
	occurs with probability at least $1-2e^{-\lambda u^2}$, by \Cref{lem:Y_holder}. Now $X_t=\psi(Y_t)$ and $\psi$ is Lipschitz with constant $\norm{\psi'}_\infty=\norm{1/(S'\circ \psi)}_\infty \leq \exp(2K_0\sigma_L^{-2})$. 
	It follows that on $\Dd$, writing $\tau=A^{-1}e^{-2}$, we have for any $s,t\in[0,m]$, $s\not=t$, $\abs{t-s}\leq \tau$,
	\[\abs{X_t-X_s} \leq \exp(2K_0\sigma_L^{-2}) \abs{Y_t-Y_s}\leq \exp(2K_0\sigma_L^{-2})w_m(\abs{t-s})u	\]
	The result follows by relabelling $(\exp(2K_0/\sigma_L^2)u)\mapsto u$, $\lambda\mapsto \lambda\exp(-4K_0/\sigma_L^2)$ and $C\mapsto C\exp(2K_0/\sigma_L^2)$.
\end{proof}

\begin{proof}[Proof of \Cref{lem:Y_holder}]
Recall $w_m(\delta):=\delta^{1/2}(\log(\delta^{-1})^{1/2}+\log(m)^{1/2})$ for $m\geq 1$ and $w_m(\delta):=w_1(\delta)$ for $m<1$. We see that we may assume $m\geq 1$ and the result for $m<1$ will follow. By the \mbox{(Dambis--)}Dubins-Schwarz Theorem (Rogers \& Williams \cite{Rogers2000}, (34.1)),
we can write $Y_t=Y_0+B_{\eta_t}$ for $B$ a standard Brownian motion and for $\eta_t=\qv{Y}_t$ the quadratic variation of $Y$.  
Define the event
\[\Cc=\braces*{ \sup\braces[\bigg]{\frac{\abs{B_{t'}-B_{s'}}}{w_{Am}(\abs{t'-s'})} : ~s',t'\in[0,Am],~s'\not=t',~ \abs{t'-s'}\leq e^{-2}
}\leq u}.\]
By \Cref{lem:B_holder}, there are universal constants $C$ and $\lambda$ so that for $u>C\max(\log(Am),1)^{1/2}$, $\Cc$ occurs with probability at least $1-2e^{-\lambda u^2}$, and note that by allowing $C$ to depend on $A$ we can replace $\max(\log(Am),1)$ with $\max(\log(m),1)$. On this event, for $s,t \in [0,m]$ with $\abs{t-s}\leq A^{-1}e^{-2}$ and $s\not=t$ we have
\begin{align*}
\abs{Y_t-Y_s}&=\abs{B_{\eta_t}-B_{\eta_s}} \\
& \leq \sup\braces{\abs{B_{t'}-B_{s'}} : ~s',t' \in [0,Am],~s'\not= t',~\abs{t'-s'}\leq A \abs{t-s}} \\
& \leq u\sup \braces{ w_{Am}(\abs{t'-s'}) : ~s',t'\in[0,Am],~s'\not= t',~\abs{t'-s'}\leq A\abs{t-s}}\\
& \leq w_{Am}(A \abs{t-s})u ,
\end{align*}
where we have used that $w_{Am}(\delta)$ is increasing in the range $\delta\leq e^{-2}$ 
to attain the final inequality.
Recalling we assume $A\geq 1$, one sees that $w_{Am}(A \delta)\leq A^{1/2}w_{Am}(\delta)$ provided $\delta\leq A^{-1}$, 
which holds in the relevant range. Thus, on $\Cc$, and for $s,$ $t$ and $u$ in the considered ranges,
\begin{align*}
\abs{Y_t-Y_s}&\leq A^{1/2}u \abs{t-s}^{1/2}\brackets*{(\log(Am))^{1/2}+(\log\abs{t-s}^{-1})^{1/2}}\\
&\leq A' u \abs{t-s}^{1/2} \brackets*{(\log(m))^{1/2}+(\log \abs{t-s}^{-1})^{1/2}},
\end{align*} where $A'$ is a constant depending on $A$ (note we have absorbed a term depending on $\log(A)$ into the constant, using that $\log(\abs{t-s}^{-1})\geq 2$). The desired result follows upon relabelling $A'u\mapsto u$ since $C$ and $\lambda$ are here allowed to depend on $A$.

For the particular case $\dY_t=\tilde{\sigma}(Y_t)\dW_t$, we simply observe that $\abs{\qv{Y}_t-\qv{Y}_s}=\abs{\int_s^t \tilde{\sigma}^2(Y_s)\ds}\leq \norm{\tilde{\sigma}^2}_\infty \abs{t-s}$. 
\end{proof}

\begin{proof}[Proof of \Cref{lem:B_holder}]
Assume $m\geq 1$; the result for $m<1$ follows. For a Gaussian process $B$, indexed by $T$ and with intrinsic covariance {(pseudo\nobreakdash-\hspace{0pt})}metric $d(s,t)=(E\sqbrackets{ \brackets{B_t-B_s}^2})^{1/2}$, Dudley's Theorem (\cite{Gine2016} Theorem 2.3.8) says
	\[E\sqbrackets*{ \sup_{s,t\in T, s\not=t} \frac{\abs{B_t-B_s}}{\int_0^{d(s,t)} \sqrt{\log N(T,d,x)}\dx}} <\infty,\] where $N(T,d,x)$ is the number of (closed) balls of $d-$radius $x$ needed to cover $T$. 
	Inspecting the proof, it is in fact shown that the process
	\[ C_u=\frac{B_{u_2}-B_{u_1}}{\int_0^{d(u_1,u_2)}\sqrt{\log(N(T,d,x))}\dx} \quad \text{on} \quad U=\braces{u=(u_1,u_2) : u_1,u_2\in T,~d(u_1,u_2)\not=0},\] is a Gaussian process on  with bounded and continuous sample paths.
	It follows by \cite{Gine2016} Theorem 2.1.20 that 
	\[\Pr\braces*{\abs*{\sup_{v\in V} \abs{C_v}-E\sup_{v\in V} \abs{C_v}}>u}\leq 2e^{-u^2/2\sigma^2},\] for any subset $V$ of $U$, where $\sigma^2=\sup_{v\in V} E [C_v^2].$ 
	We can upper bound $C_v$ by applying the trivial lower bound for the denominator $\int_0^a \sqrt{\log N(T,d,x)} \geq \frac{a}{2}\sqrt{\log 2}$ for any $a=d(u,v)$ with $u,v\in T$ (this follows from the fact that $N(T,d,x)\geq 2$ if $x$ is less than half the diameter of $T$). Using also that $d$ is the intrinsic covariance metric, we deduce that $E C_v^2 \leq 4/\log 2$,  so we can take $\sigma^2=4/\log 2$.

	We will apply the result to $B$ a standard Brownian motion on $T=[0,m]$, which has intrinsic covariance metric $d(s,t)=\abs{t-s}^{1/2}$. For this $T$ and $d$, we have $N(T,d,x)\leq mx^{-2}$. 
	Then, applying Jensen's inequality, we see 
	\begin{align*} \int_0^{d(s,t)} \sqrt{\log N(T,d,x)}\dx &\leq d(s,t)^{1/2} \brackets[\Big]{\int_0^{d(s,t)} \log(N(T,d,x)}^{1/2}\\
	& \leq 2^{1/2}d(s,t) \sqbrackets*{1+\log (d(s,t)^{-1})+\log m}^{1/2}.
	\end{align*}
	Set $V=\braces{u=(s,t)\in U : \abs{t-s}\leq e^{-2}}$ and observe that for $(s,t)\in V$ we have
	$1+\log (d(s,t)^{-1})=1+\frac{1}{2} {\log (\abs{t-s}^{-1})}\leq \log(\abs{t-s}^{-1}).$ 
	Noting further that $(a+b)^{1/2}\leq a^{1/2}+b^{1/2}$ for $a,b\geq 0$ and recalling we defined $w_m(\delta)=\delta^{1/2} \brackets{\brackets{\log \delta^{-1}}^{1/2}+\log(m)^{1/2}},$ we see \[\int_0^{d(s,t)} \sqrt{\log N(T,d,x)}\dx \leq 2^{1/2}w_m(\abs{t-s}).\] Thus, writing $M=E\sqbrackets*{ \sup\braces[\Big]{\frac{\abs{B_t-B_s}}{\int_0^{d(s,t)} \sqrt{\log N(T,d,x)}\dx} : s,t\in T, s\not=t, \abs{t-s} \leq e^{-2}}} $ we see
	\[\Pr\sqbrackets[\Big]{ \sup \braces[\Big]{ \frac{\abs{B_t-B_s}}{w_m(\abs{t-s})} : s,t\in T, s\not=t, \\ \abs{t-s} \leq e^{-2}}>2^{1/2}(M+u)} \leq 2e^{-(u^2(\log 2)/8)}.\] 
	As $M$ is a fixed finite number, we can write $M+u=(1+\eps) u$ with $\eps \to 0$ as $u\to \infty$. Then \[\Pr\sqbrackets*{ {\sup_{\substack{s,t\in T, s\not=t, \\ \abs{t-s} \leq  e^{-2}}}{\frac{\abs{B_t-B_s}}{w_m(\abs{t-s})}}}>u} \leq 2e^{-(u^2 (\log 2)/16(1+\eps)^2)}.\]
	Thus provided $u$ is larger than $M$, we have the result with the constant $\lambda=(\log 2)/64$. 
	
	Finally we track how $M$ grows with $m$ in order to know when $u$ is large enough for this \namecref{lem:B_holder} to apply. Observe that we can write $ M=E \max_k{M_k},$
	where \[ M_k = \sup_{\substack{s,t\in T_k, s\not=t, \\ \abs{t-s}\leq e^{-2}}}  \frac{\abs{B_t-B_s}}{\int_0^{d(s,t)} \sqrt{\log N(T,d,x)}\dx}, \qquad T_k=[ke^{-2},(k+2)e^{-2}].\] As $N(T,d,x)\geq N(T_k,d,x)$, defining 
	\[M_k' = \sup_{s,t\in T_k, s\not=t, \abs{t-s}\leq e^{-2}} \frac{\abs{B_t-B_s}}{\int_0^{d(s,t)} \sqrt{\log N(T_k,d,x)}\dx},\] we see $M_k\leq M_k'$. As with the whole process $C$ we can apply \cite{Gine2016} Theorem 2.1.20 to each $M_k'$ to see that 
	$ \Pr \brackets{\abs{M_k'-E M_k'}>v} \leq 2e^{-v^2/2\sigma^2},$ with $\sigma^2=4/\log 2$ as before. That is, each $(M_k'-E M_k')$ is subgaussian with parameter $12/\sqrt{\log 2}$ (see \cite{Gine2016} Lemma 2.3.1). They all have the same constant (i.e.\ not depending on $m$) expectation, we can bound their maximum, by standard results for subgaussian variables (eg.\ see \cite{Gine2016} Lemma 2.3.4):
	\[
	E M = E \sqbrackets[\big]{EM_0'+ \max_k\braces{M_k'-E M_0'}} \leq E M_0' + 12\sqrt{2\log N/\log 2},
	\]
	where $N$ is the number of $M_k'$ over which we take the maximum and scales linearly with $m$. It follows that $M$ is of order bounded by $\sqrt{\log(m)}$ as $m\to \infty$. 
\end{proof}

\section{Notation}\label{sec:notation}
We collect most of the notation used in the course of this paper. 
\small
\begin{description}
\itemsep -0.1em 
\item $X$: A solution to $\dX_t=b(X_t)\dt+\sigma(X_t)\dW_t$. 
\item $\dot{X}$:   The periodised diffusion $\dot{X}=X \mod 1$. 
\item$b,\sigma$: Drift function, diffusion coefficient.
\item $\mu=\mu_b$; $\pi_b$: Invariant distribution/density of $\dot{X}$.
\item $P_b^{(x)}$: Law of $X$ on $C([0,\infty])$ (on $C([0,\Delta])$ in \Cref{sec:SmallBallProbs}) for initial condition $X_0=x$.
\item $E_{b}$; $P_b$; $\Var_b$:  Expectation/probablity/variance according to the law of $X$ started from $\mu_b$.
\item $E_\mu;\Var_\mu$, and similar: Expectation/variance according to the subscripted measure.
\item $\WW_\sigma^{(x)}$: Notation for $P_b^{(x)}$ when $b=0$.
\item $p_b(t,x,y),\dot{p}_b(t,x,y)$: Transition densities of $X,\dot{X}$ (with respect to Lebesgue measure). 
\item $\tilde{p}_b$: Density (with respect to $\WW_\sigma^{(x)}$) of $P_b^{(x)}$ on $C([0,\Delta])$.
\item $I_b(x)=\int_0^x (2b/\sigma^2(y))\dy$.
\item $X^{(n)}=(X_0,\dots,X_{n\Delta})$; $x^{(n)}=(x_0,\dots,x_{n\Delta})$;  $p_b^{(n)}(x^{(n)})=\pi_b(x_0)\prod_{i=1}^{n}p_b(\Delta,x_{(i-1)\Delta},x_{i\Delta}).$
\item $b_0$: The true parameter generating the data. 
\item $\mu_0,\pi_0,p_0$ etc.: Shorthand for $\mu_{b_0},\pi_{b_0},p_{b_0}$ etc. 
\item $\sigma_L>0;$ $\sigma_U<\infty$: A lower and upper bound for $\sigma$. 
\item $L_0$: A constant such that $n\Delta^2\log(1/\Delta)\leq L_0$ for all $n$.
\item $\Theta=\Theta(K_0)$: The maximal paramater space: $\Theta=\braces{f\in C_\per^1([0,1]) \st ~\norm{f}_{C_\per^1}\leq K_0}$. 
\item $\Theta_s(A_0)=\braces{f\in \Theta : \norm{f}_{B_{2,\infty}^s}\leq A_0}$, for $B_{2,\infty}^s$ a (periodic) Besov space.
\item $\Ii=\braces{K_0,\sigma_L,\sigma_U}$.
\item $S_m$: Wavelet approximation space of resolution $m$, generated by periodised Meyer-type wavelets: $S_m=\Span \braces{\psi_{lk}: -1\leq l< m,0\leq k<2^l},$ where $\psi_{-1,0}$ is used as notation for the constant function~1. 
\item $D_m=\dim(S_m)=2^m$; $\pi_m=$($L^2$--)orthogonal projection onto $S_m$.
\item $w_m(\delta)=\delta^{1/2}(\log(\delta^{-1})^{1/2}+\log(m)^{1/2})$ if $m\geq 1$, $w_m:=w_1$ if $m<1$.
\item $\II_A$: Indicator of the set (or event) $A$.
\item $K(p,q)$: Kullback--Leibler divergence between densities $p,q$: $K(p,q)=E_p\sqbrackets{\log(p/q)}.$
\item $\KL(b_0,b)=E_{b_0}\log(p_0/p_b)$.
\item $B_{KL}^{(n)}(\eps)=\braces*{b\in \Theta \st K(p_0^{(n)},p_b^{(n)})\leq (n\Delta+1)\eps^2, \Var_{b_0}\brackets[\Big]{\log\brackets[\big]{{p_0^{(n)}}/{p_b^{(n)}}}}\leq (n\Delta+1)\eps^2}.$
\item $B_\eps=\braces*{b\in \Theta \st K(\pi_0,\pi_b)\leq \eps^2,~\Var_{b_0}\brackets[\Big]{\log\frac{\pi_0}{\pi_b}}\leq \eps^2,~\KL(b_0,b)\leq \Delta \eps^2,~\Var_{b_0}\brackets[\Big]{\log\frac{p_0}{p_b}}\leq \Delta\eps^2}.$
\item $\Pi$: The prior distribution.
\item $\Pi(\cdot \mid X^{(n)})$:  The posterior distribution given data $X^{(n)}$.
\item $\ip{\cdot,\cdot}$: the $L^2([0,1])$ inner product, $\ip{f,g}=\int_0^1 f(x)g(x)\dx$.
\item $\norm{\cdot}_2$: The $L^2([0,1])$--norm, $\norm{f}_2^2=\int_0^1 f(x)^2 \dx$. 
\item$\norm{\cdot}_\mu$: The $L^2(\mu)$--norm, $\norm{f}_\mu^2=\int_0^1 f(x)^2 \mu(\dx)=\int_0^1 f(x)^2 \pi_b(x)\dx$.
\item $\norm{\cdot}_\infty$: The $L^\infty$-- (supremum) norm,\footnote{All functions we use will be continuous hence we can take the supremum rather than needing the essential supremum.} $\norm{f}_\infty=\sup_{x\in[0,1]} \abs{f(x)}$. 
\item $\norm{}_{C_\per^1}\!:$ The $C_\per ^1$--norm, $\norm{f}_{C_\per^1}=\norm{f}_\infty+\norm{f'}_\infty$.
\item $\norm{\cdot}_n$: The empirical $L^2$--norm $\norm{f}_n^2=\sum_{k=1}^n f(X_{k\Delta})^2$.
\end{description}
\end{appendices}

\bibliography{/home/lkwa2/Dropbox/Kweku's/Maths/CCA/Mendeley-Bibtex/library}

\end{document}